\def\titlerunning#1{\gdef\titrun{#1}}
\def\author#1{\gdef\autrun{\def\and{\unskip, }#1}\gdef\@author{#1}}
\def\address#1{{\def\and{\\\hspace*{18pt}}\renewcommand{\thefootnote}{}%
\footnote {#1}}%
\markboth{\autrun}{\titrun}}
\def\email#1{\hspace*{4pt}{\em e-mail}: #1}
\def\MSC#1{{\renewcommand{\thefootnote}{}%
\footnote{\emph{Mathematics Subject Classification (2020):} #1}}}
\def\keywords#1{\par\medskip
\noindent\textbf{Keywords:} #1}
\newtheorem{theorem}{Theorem}[section]
\newtheorem{prop}[theorem]{Proposition}
\newtheorem{cor}[theorem]{Corollary}
\newtheorem{lemma}[theorem]{Lemma}
\theoremstyle{definition}
\newtheorem{cons}[theorem]{Construction}
\numberwithin{equation}{section}
\def\cL{\mathcal L}
\def\cA{\mathcal A}
\def\cC{\mathcal C}
\def\cD{\mathcal D}
\def\cG{\mathcal G}
\def\cH{\mathcal H}
\def\cP{\mathcal P}
\def\cU{\mathcal U}
\def\cT{\mathcal T}
\def\PG{{\rm PG}}
\def\GF{{\rm GF}}
\def\PGL{{\rm PGL}}
\def\PGU{{\rm PGU}}
\def\NU{{\rm NU}}
\begin{document}


\baselineskip=16pt

\titlerunning{}

\title{Graphs cospectral with $\NU(n+1, q^2)$, $n \ne 3$}

\author{Ferdinand Ihringer
\and
Francesco Pavese
\and
Valentino Smaldore}

\date{}

\maketitle

\address{F. Ihringer: Department of Mathematics: Analysis, Logic and Discrete Mathematics, Ghent University, Belgium \email{ferdinand.ihringer@ugent.be}
\and
F. Pavese: Dipartimento di Meccanica, Matematica e Management, Politecnico di Bari, Via Orabona 4, 70125 Bari, Italy \email{francesco.pavese@poliba.it}
\and
V. Smaldore: Dipartimento di Matematica, Informatica ed Economia, Universit{\`a} degli Studi della Basilicata, Contrada Macchia Romana, 85100, Potenza, Italy; \email{valentino.smaldore@unibas.it}
}

\bigskip

\MSC{Primary 51E20, 05E30; Secondary 05C50.}


\begin{abstract}
Let $\cH(n, q^2)$ be a non--degenerate Hermitian variety of $\PG(n, q^2)$, $n \ge 2$. Let $\NU(n+1, q^2)$ be the graph whose vertices are the points of $\PG(n, q^2) \setminus \cH(n, q^2)$ and two vertices $P_1, P_2$ are adjacent if the line joining $P_1$ and $P_2$ is tangent to $\cH(n, q^2)$. Then $\NU(n + 1, q^2)$ is a strongly regular graph. In this paper we show that $\NU(n+1, q^2)$, $n \ne 3$, is not determined by its spectrum. 

\keywords{strongly regular graph; unital; Hermitian variety.}
\end{abstract}

\section{Introduction}

A {\em strongly regular graph} with parameters $(v, k, \lambda, \mu)$ is a (simple, undirected, connected) graph with $v$ vertices such that each vertex lies on $k$ edges, any two adjacent vertices have exactly $\lambda$ common neighbours, and any two non--adjacent vertices have exactly $\mu$ common neighbours. Let $\cH(n,q^2)$ be a non--degenerate Hermitian variety of $\PG(n, q^2)$, $n \ge 2$. Let $\NU(n+1,q^2)$ be the graph whose vertices are the points of $\PG(n,q^2) \setminus \cH(n,q^2)$ and two vertices $P_1, P_2$ are adjacent if the line joining $P_1$ and $P_2$ contains exactly one point of $\cH(n,q^2)$ (i.e., it is a line tangent to $\cH(n,q^2)$). The graph $\NU(n+1,q^2)$ is a strongly regular graph for any $q$ \cite[Table 9.9, p. 145]{BH}. A variation of Godsil--McKay switching was described by Wang, Qiu, and Hu in \cite{WQH}. In \cite{IM}, by using a simplified version of the Wang--Qiu--Hu switching, the authors exhibited a strongly regular graph having the same parameters as $\NU(n+1, 4)$, $n \ge 5$, but not isomorphic to it. Moreover they asked whether or not the graph $\NU(n+1, q^2)$, is determined by its spectrum. In this paper we show that the graph $\NU(n + 1, q^2)$, $n \ne 3$, is not determined by its spectrum. 

A {\em unital} $\cU$ of a finite projective plane $\Pi$ of order $q^2$ is a set of $q^3+1$ points such that every line of $\Pi$ meets $\cU$ in either $1$ or $q+1$ points. A Hermitian curve is a unital of $\PG(2, q^2)$, called {\em classical unital}. Besides the classical one, there are other known unitals in $\PG(2, q^2)$ and they arise from a construction due to Buekenhout, see \cite{BE, B, M}. Let $\cU$ be a unital of $\PG(2, q^2)$ and let $\Gamma_{\cU}$ be the graph whose vertices are the points of $\PG(2, q^2) \setminus \cU$ and two vertices $P_1, P_2$ are adjacent if the line joining $P_1$ and $P_2$ contains exactly one point of $\cU$. The graph $\Gamma_{\cU}$ is strongly regular with the same parameters as $\NU(3, q^2)$. Here we observe the (well-known) fact that if $\cU$ is a non--classical unital, then $\Gamma_{\cU}$ is not isomorphic to $\NU(3, q^2)$.

In $\PG(n, q^2)$, $n \ge 4$, by applying a simplified version of the Wang--Qiu--Hu switching to the graph $\NU(n+1, q^2)$, we obtain two graphs $\cG'_n$ and $\cG''_n$, which are strongly regular and have the same parameters as $\NU(n+1, q^2)$. Moreover we show that $\cG'_n$ (if $q > 2$) and $\cG''_n$ are not isomorphic to $\NU(n+1, q^2)$.  

\section{Preliminary results}

Let $\PG(n, q^2)$ be the $n$--dimensional projective space over the finite $\GF(q^2)$ equipped with homogeneous projective coordinates $(X_1, \dots, X_{n+1})$. A unitary polarity of $\PG(n, q^2)$ is induced by a non--degenerate Hermitian form on the underlying vector space. A non--degenerate Hermitian variety $\cH(n, q^2)$ consists of the absolute points of a unitary polarity of $\PG(n, q^2)$. A non--degenerate Hermitian variety $\cH(n, q^2)$ has $\left(q^{n+1}+(-1)^n\right)\left(q^n-(-1)^n\right)/\left(q^2-1\right)$ points. Subspaces of maximal dimension contained in $\cH(n, q^2)$ are $\lfloor \frac{n-1}{2} \rfloor$--dimensional projective spaces and are called {\em generators}. There are either $(q + 1)(q^3 + 1) \dots (q^n + 1)$ or $(q^3 + 1)(q^5 + 1) \dots (q^{n+1} + 1)$ generators in $\cH(n, q^2)$, depending on whether $n$ is odd or even, respectively.  A line of $\PG(n, q^2)$ meets $\cH(n, q^2)$ in $1$, $q+1$ or, if $n \ge 3$, in $q^2 + 1$ points. The latter lines are the generators of $\cH(n, q^2)$ if $n \in \{3, 4\}$; lines meeting $\cH(n, q^2)$ in one or $q + 1$ points are called {\em tangent lines} or {\em secant lines}, respectively. Through a point $P$ of $\cH(n, q^2)$ there pass $\left(q^{n-1}+(-1)^{n-2}\right)\left(q^{n-2}-(-1)^{n-2}\right)/\left(q^2-1\right)$ generators and these generators are contained in a hyperplane. The hyperplane containing these generators is the polar hyperplane of $P$ with respect to the unitary polarity of $\PG(n, q^2)$ defining $\cH(n, q^2)$ and it is also called the {\em tangent hyperplane} to $\cH(n, q^2)$ at $P$. The tangent lines through $P$ are precisely the remaining lines of its polar hyperplane that are incident with $P$. If $P \notin \cH(n,q^2)$ then the polar hyperplane of $P$ is a hyperplane of $\PG(n, q^2)$ meeting $\cH(n, q^2)$ in a non--degenerate Hermitian variety $\cH(n-1, q^2)$ and it is said to be {\em secant} to $\cH(n, q^2)$. The stabilizer of $\cH(n, q^2)$ in $\PGL(n+1, q^2)$ is the group $\PGU(n+1, q^2)$. 

For a point $P$ of $\PG(2, q^2)$, its polar line meets $\cH(2, q^2)$ exactly in $P$ or in the $q+1$ points of a Baer subline, according as $P$ belongs to $\cH(2, q^2)$ or not. It is well known that $\cH(2, q^2)$ contains no {\em O’Nan configuration} \cite[p. 507]{O}, which is a configuration consisting of four distinct lines intersecting in six distinct points of $\cH(2, q^2)$. Dually, $\cH(2, q^2)$ cannot contain a {\em dual O'Nan configuration}, that is a configuration formed by four points of $\PG(2, q^2) \setminus \cH(2, q^2)$ no three on a line, such that the six lines connecting two of them are tangent lines. In $\PG(2, q^2)$, a {\em Hermitian pencil of lines} is a cone having as vertex a point $V$ and as base a Baer subline of a line $\ell$, where $V \notin \ell$. See \cite{BE} for more details.

A plane of $\PG(4, q^2)$ meets $\cH(4, q^2)$ in a line, a Hermitian pencil of lines or a non--degenerate Hermitian curve, according as its polar line has $q^2+1$, $1$ or $q+1$ points in common with $\cH(4, q^2)$, respectively.

\begin{lemma}\label{HermCurve1}
In $\PG(2, q^2)$, let $P_1, P_2, P_3$ be three points on a line $\gamma$ tangent to a non--degenerate Hermitian curve $\cH(2, q^2)$ at the point $T$, where $T \ne P_i$, $i = 1,2,3$. Let $s$ be the unique Baer subline of $\gamma$ containing $P_1, P_2, P_3$.
\begin{itemize}
\item If $T \in s$, then there is no point $P \in \PG(2, q^2) \setminus \left(\cH(2, q^2) \cup \gamma \right)$ such that the lines $P P_i$, $i= 1,2,3$, are tangent to $\cH(2, q^2)$.
\item If $T \notin s$, then there are $q$ points $P \in \PG(2, q^2) \setminus \left(\cH(2, q^2) \cup \gamma \right)$ such that the lines $P P_i$, $i= 1,2,3$, are tangent to $\cH(2, q^2)$.
\end{itemize}
\end{lemma}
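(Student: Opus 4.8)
The plan is to reduce everything to counting tangent lines through a point $P$ by looking at the "tangent lines from $P$" as a polar-line condition. Recall that if $P \notin \cH(2,q^2)$, its polar line $P^\perp$ meets $\cH(2,q^2)$ in a Baer subline, and the tangent lines to $\cH(2,q^2)$ through $P$ are exactly the $q+1$ lines joining $P$ to the $q+1$ points of $P^\perp \cap \cH(2,q^2)$; equivalently, a line $\ell \ni P$ is tangent to $\cH(2,q^2)$ if and only if the pole $\ell^\perp$ lies on $P^\perp$ and is a point of $\cH(2,q^2)$ (a tangent line is its own... no — a tangent line $\ell$ with point of tangency $T$ has $\ell^\perp = T$). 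So $PP_i$ tangent to $\cH(2,q^2)$ with tangency point $T_i$ means $T_i = (PP_i)^\perp \in P^\perp$, and since $P_i \in PP_i$ we also get $P^\perp \ni T_i$ and $T_i^\perp = PP_i \ni P_i$, i.e. $P \in T_i^\perp$ and $P_i \in T_i^\perp$. I will reformulate: the set of points $P$ for which $PP_i$ is tangent is $\bigcup_{T' \in P_i^\perp \cap \cH} (T'^\perp \setminus \{\text{stuff}\})$ — actually $PP_i$ is tangent iff there is $T' \in \cH$ on line $PP_i$, and then $PP_i = T'^\perp$ forces $T' \in P_i^\perp$. Hence the locus of $P$ with $PP_i$ tangent is $\bigcup_{T' \in P_i^\perp \cap \cH(2,q^2)} T'^\perp$, a union of $q+1$ tangent lines.

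Now here is the concrete approach. First I would set up coordinates: take $\gamma$ tangent at $T$, and choose a unitary form so that $\gamma: X_3 = 0$ and $T = (1,0,0)$, with $\cH(2,q^2)$ having equation $X_1^q X_3 + X_1 X_3^q + X_2^{q+1} = 0$ (a standard normal form for which $X_3=0$ is tangent at $(1,0,0)$). The points of $\gamma$ other than $T$ are $(a,1,0)$ with $a \in \GF(q^2)$, and the Baer subline $s$ through three given such points corresponds to an affine line in the $\GF(q)$-structure; $T \in s$ corresponds to $s$ being (a coset of) a $\GF(q)$-subspace containing the "point at infinity" direction, which after a coordinate change means $s = \{(a,1,0) : a \in \GF(q)\cdot\alpha + \beta\}$ with the relevant normalisation. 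Then a point $P \notin \cH \cup \gamma$ can be normalised as $P = (p_1, p_2, 1)$, and $PP_i$ is tangent iff the line through $P$ and $P_i = (a_i,1,0)$ meets $\cH$ in exactly one point; substituting the parametrisation $(a_i,1,0) + t(p_1,p_2,1)$ into the Hermitian equation gives, after clearing, a Hermitian-type equation in $t$ of "degree pattern" $q+1$ whose number of $\GF(q^2)$-roots is $1$ or $q+1$ — tangency is exactly the condition that the discriminant-like quantity $N(p_1 + (p_1 - a_i p_2)^q \cdots)$... I will instead phrase the tangency condition cleanly as: $PP_i$ is tangent $\iff$ $P \in T_i^\perp$ for the unique $T_i = (PP_i)^\perp \in \cH$, which by the pole computation becomes an equation of the shape $f(P) + a_i g(P) + a_i^q g(P)^q + N(a_i) h(P) = 0$ (Hermitian in $a_i$), i.e. the three simultaneous conditions for $i=1,2,3$ are three points of a Baer subline lying on a Hermitian curve in the "dual" variable.

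The key step is then a dimension/linear-algebra count. The conditions "$PP_1, PP_2, PP_3$ all tangent" are three linear-type equations on the point $P$ once we use the tangent-line description: $PP_i$ tangent means $P$ lies on one of the $q+1$ tangent lines through $P_i$, i.e. $P \in \bigcup_{T' \in P_i^\perp\cap\cH} T'^\perp$. So the set of valid $P$ is $\bigl(\bigcup_{T'\in P_1^\perp\cap\cH}T'^\perp\bigr)\cap\bigl(\bigcup_{T''\in P_2^\perp\cap\cH}T''^\perp\bigr)\cap\bigl(\bigcup_{T'''\in P_3^\perp\cap\cH}T'''^\perp\bigr)$, minus the points on $\gamma$ and on $\cH$. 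Two distinct tangent lines $T'^\perp, T''^\perp$ meet in a single point, which is $(T'T'')^\perp$; this point is off $\cH$ (since $T'T''$ is secant, as $\cH$ has no tangent-line through two of its points other than... wait, $T'T''$ joins two points of $\cH$ so it is secant, hence its pole is off $\cH$) and it lies on $\gamma$ iff $(T'T'')^\perp \in \gamma = T^\perp$ iff $T \in T'T''$ iff $T, T', T''$ are collinear. Now $T', T'', T'''$ range over $P_1^\perp \cap \cH$, $P_2^\perp \cap \cH$, $P_3^\perp\cap\cH$ respectively; the triple $(T',T'',T''')$ gives a valid $P$ precisely when $T'^\perp \cap T''^\perp \cap T'''^\perp \ne \emptyset$, i.e. when $T',T'',T'''$ are collinear on some secant line $m$ with $m^\perp = P$, and then we must exclude the degenerate cases where $P \in \gamma$ or $P \in \cH$. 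Dualising through $\perp$, the whole problem becomes: count secant lines $m$ to $\cH$ such that $m \cap \cH$ meets each of the three tangent lines $\ell_i := P_i^\perp$ (note $\ell_i$ is tangent to $\cH$ at $T_i := \ell_i^\perp$, and $T, T_1, T_2, T_3$ lie on the dual picture), equivalently count Baer sublines $m\cap\cH$ of the Hermitian curve hitting three prescribed tangent lines — and the O'Nan / dual O'Nan configuration constraint is what forbids certain coincidences.

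The main obstacle, and where I'd spend the real work, is the dichotomy $T \in s$ versus $T \notin s$. After dualising, $T\in s$ should translate into the three tangent lines $\ell_1,\ell_2,\ell_3$ (tangent at $T_1,T_2,T_3$) together with the tangent line $\gamma^{\text{dual}}$ at $T$ being in "special position" — concretely, the three points $T_1,T_2,T_3$ plus $T$ related so that the pencils line up, forcing that any secant $m$ meeting $\ell_1,\ell_2,\ell_3$ inside $\cH$ would create a (dual) O'Nan configuration with the line $\gamma$, hence cannot exist off $\gamma$: that gives the count $0$. When $T\notin s$, the absence of forced O'Nan configurations lets the count run freely, and a direct substitution shows exactly $q$ choices of the free parameter survive (the naive count is $q+1$ and exactly one value is lost to the degenerate intersection landing on $\gamma$ or on $\cH$). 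I would verify the clean count $q$ by the coordinate computation sketched above: fixing $P_1,P_2,P_3$ on $\gamma\setminus\{T\}$ and writing the three tangency equations for $P=(p_1,p_2,1)$, one eliminates $p_1$ and $p_2$ to land on a single norm equation $N(z) = c$ with $c \ne 0$ (the $T\notin s$ case), which has exactly $q+1$ solutions $z\in\GF(q^2)$, one of which corresponds to $P\in\gamma$ — leaving $q$; in the $T\in s$ case the same elimination yields $N(z)=0$ forcing $z=0$ and the unique solution lies on $\gamma$, leaving $0$. Care is needed to check that for $T \notin s$ the $q$ resulting points are genuinely distinct, lie off $\cH(2,q^2)$, and are not on $\gamma$; the off-$\cH$ and off-$\gamma$ parts follow because each such $P$ is the pole of a secant line, and $P \in \gamma = T^\perp$ would force $T$ on that secant and hence (via the three tangent lines) an O'Nan configuration contradicting $T \notin s$.
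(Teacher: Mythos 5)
There is a genuine gap: what you give is a strategy sketch whose decisive steps are asserted rather than proved, and the heuristics you offer in their place do not hold up. First, the dual reformulation contains an error: since $P_i \notin \cH(2,q^2)$ (it lies on $\gamma$ and differs from $T$), the polar line $\ell_i = P_i^{\perp}$ is a \emph{secant} line meeting $\cH(2,q^2)$ in a Baer subline, not a tangent line; your parenthetical ``$\ell_i$ is tangent to $\cH$ at $T_i := \ell_i^{\perp}$'' is impossible because $\ell_i^{\perp} = P_i \notin \cH(2,q^2)$, and the whole ``main obstacle'' paragraph is built on this misreading of the dual picture. Second, the proposed mechanism for the case $T \in s$ is not correct as stated: a putative point $P$ gives four tangent lines $\gamma, PP_1, PP_2, PP_3$, but the last three are concurrent at $P$, so no dual O'Nan configuration (four points, no three collinear) is created with $\gamma$, and nothing is ``forced'' by the O'Nan prohibition. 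The actual obstruction is much more direct (and is the paper's one-line argument): the tangency points of $PP_1,PP_2,PP_3$ lie in $P^{\perp}\cap\cH(2,q^2)$, so projecting $P^{\perp}\cap\cH(2,q^2)$ from $P$ onto $\gamma$ yields a Baer subline containing $P_1,P_2,P_3$, hence equal to $s$; if $T\in s$ the line $PT$ would be tangent at $T$, forcing $PT=\gamma$ and $P\in\gamma$, a contradiction.

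Third, and most importantly, the count $q$ in the case $T\notin s$ is never established. You defer it to a coordinate elimination ``landing on a single norm equation $N(z)=c$ with $c\ne 0$, one of whose $q+1$ solutions corresponds to $P\in\gamma$,'' but this computation is not carried out, and its claimed outcome is even internally in tension with your normalisation $P=(p_1,p_2,1)$, which already excludes all points of $\gamma: X_3=0$, so it is unclear how a solution could ``correspond to $P\in\gamma$.'' Each condition ``$PP_i$ tangent'' is a degree-$(q+1)$ condition (a union of $q+1$ tangent lines through $P_i$), and eliminating two variables from three such conditions is precisely the nontrivial content of the lemma; asserting that exactly one parameter value is lost is not a proof. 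For comparison, the paper avoids the computation entirely: it notes that the $q^2(q-1)$ Baer sublines of $\gamma$ not through $T$ and the $q^3(q-1)$ points of $\PG(2,q^2)\setminus(\cH(2,q^2)\cup\gamma)$ are each a single orbit of $\PGU(3,q^2)_{\gamma}$, that every such point determines exactly one such subline (its projection of $P^{\perp}\cap\cH(2,q^2)$), and then a tactical-configuration count gives $q^3(q-1)/\bigl(q^2(q-1)\bigr)=q$ points per subline. To salvage your approach you would either have to carry out the elimination in full (including checking distinctness of the $q$ points and that none lies on $\cH(2,q^2)$ or $\gamma$), or replace it by an argument of the paper's orbit/counting type.
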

\begin{proof}
Let $\perp$ be the unitary polarity of $\PG(2, q^2)$ defining $\cH(2, q^2)$. Assume first that $T \in s$. Suppose by contradiction that there is a point $P \in \PG(2, q^2) \setminus \left( \cH(2, q^2) \cup \gamma \right)$ such that the lines $P P_i$, $i= 1,2,3$, are tangent to $\cH(2, q^2)$. Then by projecting the $q+1$ points of $P^{\perp} \cap \cH(2, q^2)$ from $P$ onto $\gamma$ we obtain a Baer subline of $\gamma$ containing $P_1, P_2, P_3$, i.e., the Baer subline $s$. Since $T \in s$, we have that the line $P T$ has to be tangent to $\cH(2, q^2)$ at $T$. Hence $P T = \gamma$, contradicting the fact that $P \notin \gamma$.

Assume now that $T \notin s$. By using \cite[Theorem 15.3.11]{H1} is can be seen that there are $q^2(q-1)$ Baer sublines in $\gamma$ such that $T \notin s$, and these are permuted in a single orbit by the stabilizer $\PGU(3, q^2)_\gamma$ of $\gamma$ in $\PGU(3, q^2)$. On the other hand, $\PGU(3, q^2)_\gamma$ permutes in a unique orbit the $q^3(q-1)$ points of $\PG(2, q^2) \setminus \left( \cH(2, q^2) \cup \gamma \right)$, see for instance \cite[Lemma 2.7]{O}. Consider the incidence structure whose point set $\cP$ consists of the $q^4-q^3$ points of $\PG(2, q^2) \setminus \left( \cH(2, q^2) \cup \gamma \right)$ and whose block set $\cL$ of the $q^3-q^2$ Baer sublines of $\gamma$, where a point $P \in \cP$ is incident with a block $r \in \cL$ if $r$ is obtained by projecting the $q+1$ points of $P^{\perp} \cap \cH(2, q^2)$ from $P$ onto $\gamma$. Since both, $\cP$ and $\cL$, are orbits of the same group, the considered incidence structure is a tactical configuration \cite[Lemma 1.45]{HP} and hence we have that a block of $\cL$ is incident with $q$ points of $\cP$. In other words, there are $q$ points of $\PG(2, q^2) \setminus \left( \cH(2, q^2) \cup \gamma \right)$ that are projected onto the same Baer subline of $\gamma$, as required.
\end{proof}

\begin{lemma} \label{HermCurve0}
In $\PG(2, q^2)$, let $t, t'$ be two lines tangent to a non--degenerate Hermitian curve $\cH(2, q^2)$ and let $R = t \cap t'$. There is a bijection between the points $P$ of $t' \setminus \left( \cH(2, q^2) \cup \{R\} \right)$ and the Baer sublines $s_{P}$ of $t$ passing through $R$, not containing $\cH(2, q^2) \cap t$ and such that a line joining $P$ with a point of $s_P$ is tangent to $\cH(2, q^2)$.
\end{lemma}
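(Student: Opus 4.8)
The plan is to exhibit the bijection explicitly as $\Phi\colon P\mapsto s_P$, where $s_P$ is the set of points of $t$ lying on a tangent line to $\cH(2,q^2)$ through $P$, and then verify the required properties. Write $T$ and $T'$ for the points where $t$ and $t'$ touch $\cH(2,q^2)$. Since $t\neq t'$ and each point of $\cH(2,q^2)$ lies on a unique tangent line, $R\notin\cH(2,q^2)$, so in particular $R\neq T,T'$. Fix $P\in t'\setminus(\cH(2,q^2)\cup\{R\})$. Its polar $P^\perp$ is secant, $P^\perp\cap\cH(2,q^2)$ is a Baer subline of $P^\perp$, and a line through $P$ is tangent to $\cH(2,q^2)$ if and only if it joins $P$ to a point of $P^\perp\cap\cH(2,q^2)$ (the tangent point being that point). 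As $P\notin\cH(2,q^2)$ we have $P\notin P^\perp$, and $P^\perp\neq t$ (otherwise $P$ would be the pole $T$ of $t$), so projection from $P$ is a perspectivity $P^\perp\to t$ and carries the Baer subline $P^\perp\cap\cH(2,q^2)$ to a Baer subline $s_P$ of $t$ with $q+1$ points; by construction $s_P$ is exactly the set described above.

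Next I would check that $s_P$ passes through $R$ and misses $T$. Since $P\in t'=(T')^\perp$ we get $T'\in P^\perp\cap\cH(2,q^2)$, and the tangent line $PT'=t'$ meets $t$ at $R$, so $R\in s_P$. If instead $T\in s_P$, then $T=PQ\cap t$ for some $Q\in P^\perp\cap\cH(2,q^2)$; the line $PQ$ is tangent at $Q$ and passes through $T\in\cH(2,q^2)$, forcing $Q=T$ and hence $P\in T^\perp=t$, contradicting $P\notin t$. Thus $\Phi$ maps into the family of Baer sublines of $t$ through $R$ not containing $T$.

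The heart of the argument, which I expect to be the main obstacle, is injectivity of $\Phi$. Suppose $P\neq P'$ lie in $t'\setminus(\cH(2,q^2)\cup\{R\})$ with $s_P=s_{P'}=:s$. Since $|s|=q+1\ge 3$ we may pick distinct $X_1,X_2\in s\setminus\{R\}$. The four points $P,P',X_1,X_2$ all lie off $\cH(2,q^2)$ — here $X_i\notin\cH(2,q^2)$ since $X_i\neq T$ — and no three of them are collinear, because $P,P'\in t'$ while $X_1,X_2\in t\setminus\{R\}$ avoid $t'$, and $X_1,X_2\in t$ while $P,P'\in t'\setminus\{R\}$ avoid $t$. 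Finally all six lines joining two of them are tangent to $\cH(2,q^2)$: $PP'=t'$ and $X_1X_2=t$ are tangent by hypothesis, and $PX_i$, $P'X_i$ are tangent because $X_i\in s_P=s_{P'}$. This is a dual O'Nan configuration, which $\cH(2,q^2)$ does not admit; the contradiction proves that $\Phi$ is injective.

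To finish, I would match cardinalities. The domain $t'\setminus(\cH(2,q^2)\cup\{R\})$ has $(q^2+1)-2=q^2-1$ points. In $\PG(1,q^2)$ there are $q(q^2+1)$ Baer sublines, $q^2+q$ of them through a fixed point, and $q+1$ through two fixed points; hence there are $(q^2+q)-(q+1)=q^2-1$ Baer sublines of $t$ through $R$ not containing $T$. An injection between two finite sets of equal size $q^2-1$ whose image lies in the second is a bijection onto it, so $\Phi$ identifies $t'\setminus(\cH(2,q^2)\cup\{R\})$ with the whole family of such Baer sublines; by the definition of $s_P$, a line joining the matched point $P$ to a point of $s_P$ is tangent, which is the assertion. (Alternatively, surjectivity could be obtained from Lemma~\ref{HermCurve1}, but the counting argument is cleaner.)
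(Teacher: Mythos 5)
Your proof is correct, but it follows a genuinely different route from the paper. The paper disposes of the lemma in one line by a group-action argument: the stabilizer of the pair $t, t'$ in $\PGU(3, q^2)$ acts transitively both on the $q^2-1$ points of $t' \setminus \left( \cH(2, q^2) \cup \{R\} \right)$ and on the $q^2-1$ Baer sublines of $t$ through $R$ missing $\cH(2, q^2) \cap t$, so the (equivariant) correspondence $P \mapsto s_P$ is forced to be a bijection --- the same tactical-configuration device used in the second half of Lemma~\ref{HermCurve1}. You instead argue directly: you realize $s_P$ as the projection of $P^\perp \cap \cH(2, q^2)$ from $P$ onto $t$, check explicitly that $R \in s_P$ and $T \notin s_P$, prove injectivity by producing a dual O'Nan configuration from two points with the same subline (a fact the paper already quotes and uses elsewhere), and get surjectivity by counting Baer sublines of $\PG(1,q^2)$ through one point and through two points; all these verifications and counts are accurate. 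What each approach buys: the paper's proof is shorter but leans on the transitivity claims for $\PGU(3,q^2)$ and leaves the well-definedness of $P \mapsto s_P$ (Baer subline through $R$, avoiding $T$) and the equivariance implicit; yours is longer but elementary and self-contained, making those hidden checks explicit and avoiding any appeal to orbit computations.
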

\begin{proof}
It is enough to note that the stabilizer of $t$ and $t'$ in $\PGU(3, q^2)$ acts transitively on the $q^2-1$ points of $t' \setminus \left( \cH(2, q^2) \cup \{R\} \right)$ and the $q^2-1$ Baer sublines of $t$ through $R$ not containing $\cH(2, q^2) \cap t$.
\end{proof}

\begin{lemma} \label{HermCurve-1}
In $\PG(2, q^2)$, let $t$ be a line tangent to a non--degenerate Hermitian curve $\cH(2, q^2)$ at $T$ and let $u, u_1, u_2$ be three distinct points of $t$ such that the Baer subline of $t$ containing them does not pass through $T$. For a point $u' \in \PG(2, q^2) \setminus \cH(2, q^2)$ such that $\langle u, u' \rangle$ is secant to $\cH(2, q^2)$ and $\langle u', u_1 \rangle$, $\langle u', u_2 \rangle$ are tangents, there are exactly two points $R_i$, $i = 1, 2$, of $\PG(2, q^2) \setminus \left( \cH(2, q^2) \cup t \right)$ such that the lines $\langle R_i, u \rangle, \langle R_i, u_1 \rangle, \langle R_i, u_2 \rangle, \langle R_i, u' \rangle$ are tangent to $\cH(2, q^2)$. Moreover $\langle R_i, u' \rangle \cap t = u_i$.
\end{lemma}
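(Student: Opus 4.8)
The plan is to pin down the two points $R_1,R_2$ on the tangent lines $\langle u',u_1\rangle$ and $\langle u',u_2\rangle$ with the help of Lemma~\ref{HermCurve0}, and then to show that any point having the stated property lies on one of these two lines. I begin with some preliminaries. Since $\langle u,u'\rangle$ is secant, $u'\notin t$, so $\langle u',u_1\rangle$ and $\langle u',u_2\rangle$ are tangent lines meeting $t$ exactly in $u_1$ and in $u_2$; they are distinct and $\langle u',u_1\rangle\cap\langle u',u_2\rangle=\{u'\}$. Write $s$ for the Baer subline of $t$ through $u,u_1,u_2$; by hypothesis $T\notin s$. For any $P\in\PG(2,q^2)\setminus(\cH(2,q^2)\cup t)$ the $q+1$ tangent lines through $P$ are the lines joining $P$ to the $q+1$ points of the Baer subline $P^{\perp}\cap\cH(2,q^2)$, so the perspectivity from $P$ onto $t$ sends that Baer subline to the set $t_P:=\{X\in t:\langle P,X\rangle\text{ is tangent}\}$, which is therefore a Baer subline of $t$ with $q+1$ points. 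Applying this with $P=u'$, the set $t_{u'}$ is a Baer subline containing $u_1$ and $u_2$, but not $u$ (as $\langle u,u'\rangle$ is secant) and not $T$ (the only tangent line through $T$ is $t$, and $u'\notin t$); hence $s\ne t_{u'}$, and since two distinct Baer sublines of a line share at most two points, $s\cap t_{u'}=\{u_1,u_2\}$.

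To produce $R_1$, apply Lemma~\ref{HermCurve0} to the tangent lines $t$ and $t'=\langle u',u_1\rangle$, whose common point is $u_1$. Since $s$ is a Baer subline of $t$ through $u_1$ not containing $\cH(2,q^2)\cap t=\{T\}$, it is the image of a unique point $R_1\in t'\setminus(\cH(2,q^2)\cup\{u_1\})$, and by construction $\langle R_1,X\rangle$ is tangent for every $X\in s$. In particular $R_1\notin\cH(2,q^2)\cup t$, the lines $\langle R_1,u\rangle,\langle R_1,u_1\rangle,\langle R_1,u_2\rangle$ are tangent, and $R_1\ne u'$ (otherwise $\langle u,u'\rangle$ would be tangent), so $\langle R_1,u'\rangle=t'$ is tangent and meets $t$ in $u_1$. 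Thus $R_1$ has the required property. Running the same argument with $u_2$ in place of $u_1$ yields $R_2\in\langle u',u_2\rangle$ with $\langle R_2,u'\rangle\cap t=u_2$, and $R_1\ne R_2$ because the only common point of $\langle u',u_1\rangle$ and $\langle u',u_2\rangle$ is $u'$, which equals neither $R_1$ nor $R_2$.

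For the converse, let $R\in\PG(2,q^2)\setminus(\cH(2,q^2)\cup t)$ be any point for which $\langle R,u\rangle,\langle R,u_1\rangle,\langle R,u_2\rangle,\langle R,u'\rangle$ are all tangent. Then $u,u_1,u_2\in t_R$, so $t_R=s$, a Baer subline being determined by any three of its points. Put $w=\langle R,u'\rangle\cap t$. Tangency of $\langle R,u'\rangle$ gives $w\in t_R=s$, and tangency of $\langle u',w\rangle=\langle R,u'\rangle$ gives $w\in t_{u'}$; hence $w\in s\cap t_{u'}=\{u_1,u_2\}$, so $\langle R,u'\rangle$ is $\langle u',u_1\rangle$ or $\langle u',u_2\rangle$ and $R$ lies on the corresponding line. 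If $R\in t'=\langle u',u_1\rangle$, then $R$ lies in the domain of the bijection of Lemma~\ref{HermCurve0} for $t,t'$ and corresponds to a Baer subline of $t$ contained in $t_R=s$, hence to $s$ itself, so $R=R_1$; symmetrically $R=R_2$ in the other case. Therefore the points with the stated property are exactly $R_1$ and $R_2$, and $\langle R_i,u'\rangle\cap t=u_i$ for $i=1,2$. The step I expect to require the most care is verifying that $t_{u'}$ is genuinely a Baer subline (so that it can meet $s$ in at most two points) and checking that the points furnished by Lemma~\ref{HermCurve0} avoid $\cH(2,q^2)$, $t$ and $u'$; once this bookkeeping is done, the conclusion that there are exactly two points is forced.
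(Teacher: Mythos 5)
Your proof is correct, but it follows a genuinely different route from the paper. The paper argues by brute force in coordinates: it normalizes $\cH(2,q^2): X_1^qX_2+X_1X_2^q+X_3^{q+1}=0$, writes $R=(c,1,d)$, uses the tangency criterion of Hirschfeld--Thas to translate the four tangency conditions into equations, and reduces everything to an explicit quadratic in $c$ whose two roots give $R_1,R_2$, finally checking $\langle R_i,u'\rangle\cap t\in\{u_1,u_2\}$ by computation. You instead argue synthetically: you observe that for an external point $P\notin\cH(2,q^2)\cup t$ the trace $t_P=\{X\in t:\langle P,X\rangle \text{ tangent}\}$ is a Baer subline (the perspective image of $P^\perp\cap\cH(2,q^2)$), so that $t_{u'}$ and $s$ are distinct Baer sublines meeting exactly in $\{u_1,u_2\}$; existence and uniqueness of $R_i$ on $\langle u',u_i\rangle$ then come from the bijection of Lemma~\ref{HermCurve0} applied to the tangent pair $(t,\langle u',u_i\rangle)$, and the converse direction forces any admissible $R$ onto one of these two lines. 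Your checks that the points produced avoid $\cH(2,q^2)$, $t$ and $u'$, and that $t_R=s$ in the converse, are all in order (in the converse one should note $R\neq u'$, but this is immediate since $\langle u,u'\rangle$ is secant). What each approach buys: the paper's computation exhibits the two points explicitly, while your argument is computation-free, makes transparent where the hypotheses $T\notin s$ and ``$\langle u,u'\rangle$ secant'' enter, and delivers the location statement $\langle R_i,u'\rangle\cap t=u_i$ as part of the construction rather than as an afterthought; it also ties this lemma more tightly to Lemma~\ref{HermCurve0}, which the paper proves but does not reuse here.
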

\begin{proof}
We may assume without loss of generality that $\cH(2, q^2): X_1^q X_2 + X_1 X_2^q + X_3^{q+1} = 0$, $T = (1, 0, 0)$, $u = (0, 0, 1)$, $u_i = (x_i, 0, 1)$, with $x_1^q x_2 - x_1 x_2^q \ne 0$, otherwise $T$ would belong to the Baer subline of $t$ containing $u, u_1, u_2$. Let $R = (c, 1, d)$. By \cite[Lemma 2.3]{HT}, the lines $\langle R, u \rangle$, $\langle R, u_i \rangle$, $i = 1,2$, are tangents if and only if $c + c^q = 0$ and $d = \frac{x_1^{q+1} x_2^q - x_1^q x_2^{q+1}}{x_1^q x_2 - x_1 x_2^q}$. Let $u' = (a, 1, b)$, where $a + a^q \ne 0$, since $\langle u, u' \rangle$ is secant, $a+a^q+b^{q+1} \ne 0$, since $u' \notin \cH(2, q^2)$ and $b - d = \frac{(a+a^q) (x_1^q - x_2^q)}{x_1^q x_2 - x_1 x_2^q}$, since $|\langle u', u_i \rangle \cap \cH(2, q^2)| = 1$, $i = 1, 2$. Thus the line $\langle R, u' \rangle$ is tangent if and only if $c$ satisfies $X^2 + X (a^q - a + b^qd - bd^q) + ad (b - d)^q + a^qd^q (b - d) - a^{q+1} = 0$. The two solutions of the previous quadratic equation are $\frac{a x_1^q (x_2 - x_1) + a^q x_1 (x_2^q - x_1^q)}{x_1^q x_2 - x_1 x_2^q}$ and $\frac{a x_2^q (x_2 - x_1) + a^q x_2 (x_2^q - x_1^q)}{x_1^q x_2 - x_1 x_2^q}$. Moreover it can be easily checked that $\langle u', R \rangle \cap t \in \{ u_1, u_2 \}$.    
\end{proof}

\begin{lemma}\label{HermCurve2}
Let $\ell$ be a line of $\PG(2, q^2)$ and let $P_1, P_2, P_3$ be three non--collinear points of $\PG(2, q^2) \setminus \ell$. Then there are $q^2+2q$ Hermitian pencils of lines meeting $\ell$ in one point and containing $P_1, P_2, P_3$.
\end{lemma}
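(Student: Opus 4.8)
The plan is to enumerate the Hermitian pencils in question by their vertex. Suppose $\cK$ is a Hermitian pencil of lines with vertex $V$ (a cone over a Baer subline of some line not through $V$), viewed as the union of its $q+1$ lines. If $V\notin\ell$ then those $q+1$ lines cut $q+1$ distinct points on $\ell$, so $\cK$ does not meet $\ell$ in a single point; hence any $\cK$ as in the statement has $V\in\ell$. For $V\in\ell$ the $q+1$ lines of $\cK$ form a \emph{Baer subpencil} $\beta$ of the pencil $\Lambda_V\cong\PG(1,q^2)$ of all lines through $V$ (by a Baer subpencil I mean the image of a Baer subline of a line under the perspectivity from that line onto $\Lambda_V$, which is independent of the chosen line); then $\cK\cap\ell=\{V\}$ exactly when $\ell\notin\beta$, and $P_i\in\cK$ exactly when the line $VP_i$ lies in $\beta$ (note $P_i\ne V$ as $P_i\notin\ell$). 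Since $V$ and $\beta$ are both recoverable from the point set of $\cK$, the number we want equals the number of pairs $(V,\beta)$ with $V\in\ell$, $\beta$ a Baer subpencil of $\Lambda_V$, $VP_1,VP_2,VP_3\in\beta$ and $\ell\notin\beta$. I would then count these by splitting on $V$.

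Since $P_1,P_2,P_3$ are non-collinear, the three points $X_{ij}:=P_iP_j\cap\ell$ are pairwise distinct. For $V=X_{12}$ one has $VP_1=VP_2\ne VP_3$, and a Baer subpencil of $\Lambda_V$ through the two distinct lines $VP_1,VP_3$ but not through the third distinct line $\ell$ exists in exactly $q$ ways: there are $q+1$ Baer sublines of $\PG(1,q^2)$ through two given points, exactly one of which contains any prescribed third point. The cases $V=X_{13},X_{23}$ are symmetric, so these three vertices contribute $3q$ pencils in all. For any other $V\in\ell$ the four lines $\ell,VP_1,VP_2,VP_3$ are pairwise distinct, so there is a unique Baer subpencil $\beta_V$ through $VP_1,VP_2,VP_3$; such a $V$ is counted precisely when $\ell\notin\beta_V$, equivalently (by the classical criterion that four distinct points of $\PG(1,q^2)$ lie on a common Baer subline iff their cross-ratio is in $\GF(q)$) when $c(V):=(\ell,VP_1;\,VP_2,VP_3)\notin\GF(q)$, the cross-ratio being taken in $\Lambda_V$.

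The key point is that $V\mapsto c(V)$ extends to a bijective projectivity $\ell\to\PG(1,q^2)$. Cutting the four concurrent lines by the fixed transversal $P_2P_3$ gives, for $V\notin\{X_{12},X_{13},X_{23}\}$, the identity $c(V)=\left(X_{23},\ VP_1\cap P_2P_3;\ P_2,\ P_3\right)$, where $V\mapsto VP_1\cap P_2P_3$ is the perspectivity from $\ell$ onto $P_2P_3$ with centre $P_1$ (legitimate since $P_1\notin\ell$ and $P_1\notin P_2P_3$); hence $c$ is a composition of projectivities and so extends to a projectivity $\tilde c:\ell\to\PG(1,q^2)$. That perspectivity sends $X_{12}\mapsto P_2$, $X_{13}\mapsto P_3$ and fixes $X_{23}$, whence $\tilde c(X_{12})=\infty$, $\tilde c(X_{13})=0$, $\tilde c(X_{23})=1$; these three values are distinct, so $\tilde c$ is nondegenerate, i.e.\ a bijection onto $\GF(q^2)\cup\{\infty\}$. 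Consequently $c^{-1}(\GF(q))$ has exactly $q$ elements, two of them ($X_{13}$ and $X_{23}$) lying in $\{X_{12},X_{13},X_{23}\}$; so among the $q^2-2$ vertices outside $\{X_{12},X_{13},X_{23}\}$, precisely $q-2$ satisfy $\ell\in\beta_V$ and the remaining $q^2-q$ are counted. Together with the $3q$ pencils found above this gives a total of $(q^2-q)+3q=q^2+2q$, as claimed. The only delicate point is the nondegeneracy of $c$; if the synthetic argument is felt to be unsatisfying, it can be checked in coordinates $\ell:X_3=0$, $V=(1:t:0)$, $P_i=(a_i:b_i:1)$, where $c$ is the fractional-linear map $t\mapsto \frac{(b_1-b_3)-(a_1-a_3)t}{(b_1-b_2)-(a_1-a_2)t}$, nondegenerate precisely because $X_{12}\ne X_{13}$. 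Everything else — the enumeration of Baer sublines of $\PG(1,q^2)$ through one or two points and the cross-ratio criterion — is classical.
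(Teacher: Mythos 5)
Your proposal is correct and takes essentially the same approach as the paper: both classify the pencils by their vertex on $\ell$, obtain $3q$ from the three vertices $P_iP_j \cap \ell$ by counting Baer sublines of the dual pencil through two prescribed lines and discarding the unique one through $\ell$, and $q^2-2-(q-2)=q^2-q$ from the remaining vertices. The only difference is in locating the $q-2$ excluded generic vertices: you count them as the generic points of $\tilde{c}^{-1}(\GF(q))$ via the cross-ratio criterion and a projectivity, whereas the paper exhibits them synthetically as the non-special points of the Baer subline $s$ of $\ell$ obtained by projecting from $P_1$ the Baer subline of $P_2P_3$ through $P_2$, $P_3$, $P_2P_3\cap\ell$ --- two equivalent descriptions of the same set, since your $\tilde{c}^{-1}(\GF(q)\cup\{\infty\})$ is precisely $s$.
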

\begin{proof}
Let $\cC$ be a Hermitian pencil of lines consisting of $q+1$ lines through a point of $\ell$ and containing the points $P_1, P_2, P_3$. If the lines $P_1 P_2, P_1 P_3, P_2 P_3$ are not lines of $\cC$, then there are $q^2-2$ of such curves. Note that by projecting the Baer subline of $P_2 P_3$ containing $\ell \cap (P_2 P_3), P_2, P_3$ from $P_1$ onto $\ell$ and by projecting the Baer subline of $P_1 P_3$ containing $\ell \cap (P_1 P_3), P_1, P_3$ from $P_2$ onto $\ell$ and by projecting the Baer subline of $P_1 P_2$ containing $\ell \cap (P_1 P_2), P_1, P_2$ from $P_3$ onto $\ell$, we obtain the same Baer subline $s$ of $\ell$. If a Hermitian pencil of lines contains $\ell$ and the points $P_1, P_2, P_3$, but does not contain the lines $P_1 P_2, P_1 P_3, P_2 P_3$, then  its vertex is a point of $s \setminus \left\{ \ell \cap (P_1 P_2), \ell \cap (P_1 P_3), \ell \cap (P_2 P_3) \right\}$. Hence, among the $q^2-2$ Hermitian pencils of lines described above there are exactly $q-2$ that contain $\ell$. There are $q+1$ Hermitian pencils of lines consisting of $q+1$ lines through the point of $\ell \cap (P_1 P_2)$, containing the points $P_1, P_2, P_3$ and the line $P_1 P_2$. Exactly one of these contains $\ell$. Similarly for $P_1 P_3$ and $P_2 P_3$. Therefore there are $q^2-2 - (q-2) + 3(q+1) - 3 = q^2+2q$ Hermitian pencils of lines meeting $\ell$ in one point and containing $P_1, P_2, P_3$.
\end{proof}

\begin{lemma}\label{HermCurve3}
In $\PG(2, q^2)$, let $\cH(2, q^2)$ be a non--degenerate Hermitian curve and let $P_1, P_2, P_3$ be three non--collinear points of $\PG(2, q^2) \setminus \cH(2, q^2)$ such that the line $P_i P_j$ is tangent to $\cH(2, q^2)$, $1 \le i < j \le 3$. Then there are exactly $3q$ Hermitian pencils of lines meeting $\cH(2, q^2)$ in $q+1$ points and containing $P_1, P_2, P_3$.
\end{lemma}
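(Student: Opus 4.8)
The plan is to begin by identifying, for a fixed non-degenerate $\cH(2,q^2)$, exactly which Hermitian pencils of lines meet it in $q+1$ points. Let $\cC$ be such a pencil, with vertex $V$ and $q+1$ lines through $V$. If $V\in\cH(2,q^2)$ one computes that $|\cC\cap\cH(2,q^2)|$ equals $q^2+1$ or $q^2+q+1$, according as the tangent line to $\cH(2,q^2)$ at $V$ is or is not among the lines of $\cC$; both exceed $q+1$, so $V\notin\cH(2,q^2)$. For $V\notin\cH(2,q^2)$ any two lines of $\cC$ meet only in $V\notin\cH(2,q^2)$, so if $t$ of them are tangent and the remaining $q+1-t$ are secant then $|\cC\cap\cH(2,q^2)|=t+(q+1-t)(q+1)=(q+1)^2-tq$, which equals $q+1$ exactly when $t=q+1$. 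Recalling that there are exactly $q+1$ tangent lines to $\cH(2,q^2)$ through an external point $V$, their contact points forming the Baer subline $V^{\perp}\cap\cH(2,q^2)$ with $\perp$ the defining polarity, we conclude that $\cC$ is precisely the cone with vertex $V$ over $V^{\perp}\cap\cH(2,q^2)$. Conversely this cone is always such a pencil, and $V$ is the common point of its lines, hence determined by $\cC$. So the pencils to be counted correspond bijectively to the points $V\in\PG(2,q^2)\setminus\cH(2,q^2)$, and the pencil attached to $V$ contains $P_i$ precisely when $V=P_i$ or the line $P_iV$ is tangent to $\cH(2,q^2)$.

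Next I would count the admissible $V$. If $V\in\{P_1,P_2,P_3\}$, say $V=P_1$, the attached pencil contains $P_2$ and $P_3$ exactly because the lines $P_1P_2$ and $P_1P_3$ are tangent by hypothesis; thus $P_1,P_2,P_3$ give three admissible pencils, distinct since their vertices are. Now let $V\notin\cH(2,q^2)$ with $V\notin\{P_1,P_2,P_3\}$ and $P_1V,P_2V,P_3V$ all tangent. If $V$ lay on none of the lines $P_1P_2,P_1P_3,P_2P_3$, then $V,P_1,P_2,P_3$ would be four points of $\PG(2,q^2)\setminus\cH(2,q^2)$, no three collinear, with all six joining lines tangent, i.e. a dual O'Nan configuration; but $\cH(2,q^2)$ admits none. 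Hence $V$ lies on exactly one of the three sides $P_iP_j$ — at least one by the above, and not on two since those meet only in a $P_k$.

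Finally I would count, for one side — say $P_1P_2$, which is tangent, with contact point $T_3$ — the admissible vertices on it: points $V\in P_1P_2$ with $V\ne P_1,P_2$, $V\notin\cH(2,q^2)$, and $P_3V$ tangent. Since $P_3\notin P_1P_2$, the $q+1$ tangent lines to $\cH(2,q^2)$ through $P_3$ meet $P_1P_2$ in $q+1$ distinct points; two of these are $P_1$ and $P_2$ (via the tangents $P_3P_1$ and $P_3P_2$), which are to be discarded, and none of them is $T_3$, for a line through $P_3$ and $T_3\in\cH(2,q^2)$ that were tangent would be the tangent at $T_3$, namely $P_1P_2$, against $P_3\notin P_1P_2$. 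This leaves $q-1$ admissible vertices on $P_1P_2$, all external because $P_1P_2\cap\cH(2,q^2)=\{T_3\}$; the same holds for $P_1P_3$ and $P_2P_3$, and the three sets are disjoint. Summing, the number of Hermitian pencils is $3+3(q-1)=3q$. The step I expect to be the crux is the opening one, where the seemingly weak requirement "$q+1$ points" is shown to force $\cC$ to be the full tangent cone at an external point; once that is in place, the rest is the dual O'Nan obstruction together with elementary incidence counts on the three tangent sides.
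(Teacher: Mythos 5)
Your proof is correct and follows essentially the same route as the paper's: identify the pencils meeting $\cH(2,q^2)$ in $q+1$ points as the tangent cones from external vertices $V$, use the absence of a dual O'Nan configuration to force $V \in \{P_1,P_2,P_3\}$ or $V$ on a side $P_iP_j$, and count $3+3(q-1)=3q$. The only difference is that you spell out details the paper leaves implicit, notably why all lines of such a pencil must be tangent and why each side carries exactly $q-1$ admissible vertices.
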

\begin{proof}
Let $\perp$ be the unitary polarity of $\PG(2, q^2)$ defining $\cH(2, q^2)$. Let $\cC$ be a Hermitian pencil of lines such that $|\cC \cap \cH(2, q^2)| = q+1$ and $P_i \in \cC$, $i = 1,2,3$. Let $V$ be the vertex of $\cC$. Then necessarily $V^{\perp} = \langle \cC \cap \cH(2, q^2) \rangle$ and each of the $q+1$ lines of $\cC$ is tangent to $\cH(2, q^2)$. If $V$ is one among $P_1, P_2, P_3$, then $\cC$ is uniquely determined. If $V \notin \{P_1, P_2, P_3\}$, then $V$ must lie on one of the lines $P_1 P_2, P_1 P_3, P_2 P_3$, otherwise $\cH(2, q^2)$ would contain the dual of an O'Nan configuration, a contradiction. On the other hand the point $V$ can be chosen in $q-1$ ways on each of the three lines $P_1 P_2, P_1 P_3, P_2 P_3$ and for every choice of $V$ the curve $\cC$ is uniquely determined.
\end{proof}

\begin{lemma}\label{HermCurve4}
In $\PG(2, q^2)$, let $\cH$ and $\cH'$ be two non--degenerate Hermitian curves with associated polarity $\perp$ and $\perp'$, respectively. Let $|\cH \cap \cH'| \in \{1, q+1\}$ and let $\ell$ be the (unique) line of $\PG(2, q^2)$ such that $\ell \cap \cH = \ell \cap \cH' = \cH \cap \cH'$. Thus $\ell^\perp = \ell^{\perp'}$ and if $P \in \cH \setminus \cH'$, then $P^{\perp'} \cap P^\perp \in \ell$.
\end{lemma}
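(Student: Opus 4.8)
The plan is to reduce the statement to a single identity between Hermitian matrices. Fix a non‑degenerate Hermitian matrix $H$ representing $\cH$, so that $\cH=\{[x]:\bar x^{T}Hx=0\}$ and the polar line of $P=[v]$ is $P^{\perp}=\{[y]:\bar v^{T}Hy=0\}$, where bar is the order‑$q$ automorphism of $\GF(q^{2})$ applied coordinatewise; fix $H'$ for $\cH'$ likewise. Pick a vector $u$ with $\ell=\{[y]:\bar u^{T}y=0\}$, so that $u\bar u^{T}$ is, up to a factor of $\GF(q)^{*}$, the rank‑one Hermitian matrix whose associated curve is the doubled line $\ell$. The claim that drives everything is
\[
H'=\lambda H+\delta\,u\bar u^{T}\qquad\text{for suitable }\lambda,\delta\in\GF(q)^{*}.
\]
Granting this, the two conclusions are short computations, so the real content is to prove the displayed identity.

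To prove it I would restrict both forms to the $2$‑dimensional subspace $V_{\ell}$ underlying $\ell$. Since $\ell\cap\cH=\ell\cap\cH'=\cH\cap\cH'=:b$, the restrictions $H|_{V_{\ell}}$ and $H'|_{V_{\ell}}$ have the same zero set $b$ on the line $\ell$. A Hermitian form on a plane over $\GF(q^{2})$ has $1$, $q+1$, or $q^{2}+1$ zeros according as its rank is $1$, $2$, or $0$; hence both restrictions are non‑degenerate if $|b|=q+1$, and both have rank $1$ with radical $b$ if $|b|=1$. In each case a Hermitian form on the plane is determined up to a factor of $\GF(q)^{*}$ by its zero set: this is immediate in the rank‑$1$ case, and in the rank‑$2$ case one takes a basis of $V_{\ell}$ made of two isotropic vectors (they remain isotropic for $H'|_{V_{\ell}}$, since they represent points of $b$) and is reduced to the fact that a $1$‑dimensional $\GF(q)$‑subspace of $\GF(q^{2})$ has multiplicative stabiliser $\GF(q)^{*}$, equivalently that $\mathrm{Tr}_{\GF(q^{2})/\GF(q)}(ct)=0$ for all $t$ in a fixed such subspace forces $c\in\GF(q)$. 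Therefore $H'|_{V_{\ell}}=\lambda\,H|_{V_{\ell}}$ for some $\lambda\in\GF(q)^{*}$, so $H'-\lambda H$ is a Hermitian matrix vanishing identically on $V_{\ell}$ and hence of rank $\le 1$; it is nonzero because $\cH\ne\cH'$, and its zero set contains the line $\ell$, so $H'-\lambda H=\delta\,u\bar u^{T}$ for some $\delta\in\GF(q)^{*}$.

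It remains to read off the two assertions. The pole $\ell^{\perp}$ is the point $[v_{0}]$ for which $\bar v_{0}^{T}H$ is proportional to the linear form $\bar u^{T}$ of $\ell$, namely $[H^{-1}u]$, and likewise $\ell^{\perp'}=[(H')^{-1}u]$. From $H'=\lambda H+\delta\,u\bar u^{T}$ one gets $H'(H^{-1}u)=(\lambda+\delta\kappa)\,u$ where $\kappa:=\bar u^{T}H^{-1}u\in\GF(q)$; the scalar $\lambda+\delta\kappa$ is nonzero, for otherwise the nonzero vector $H^{-1}u$ would lie in the kernel of the invertible matrix $H'$. Hence $H^{-1}u$ and $(H')^{-1}u$ are proportional, i.e.\ $\ell^{\perp}=\ell^{\perp'}$. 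For the second part, let $P=[v]\in\cH\setminus\cH'$. Then $\bar v^{T}H'v=\lambda\,\bar v^{T}Hv+\delta\,|\bar u^{T}v|^{2}=\delta\,|\bar u^{T}v|^{2}\ne 0$, so $\bar u^{T}v\ne 0$ and $P\notin\ell$. The linear form cutting out $P^{\perp'}$ equals $\bar v^{T}H'=\lambda\,\bar v^{T}H+\delta\,(\bar v^{T}u)\,\bar u^{T}$, a linear combination of the forms cutting out $P^{\perp}$ and $\ell$; thus $P^{\perp'}$ belongs to the pencil of lines through $P^{\perp}\cap\ell$. Here $P^{\perp}\ne\ell$: otherwise $P=\ell^{\perp}$, impossible, since $\ell^{\perp}$ is external to $\cH$ when $\ell$ is secant and equals the point $b\subseteq\cH'$ when $\ell$ is tangent, while $P\in\cH\setminus\cH'$. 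Since moreover $P^{\perp}\ne P^{\perp'}$ — else $\bar v^{T}H$ would be proportional to $\bar u^{T}$, again forcing $P=\ell^{\perp}$ — the point $P^{\perp}\cap P^{\perp'}$ coincides with $P^{\perp}\cap\ell$ and hence lies on $\ell$.

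The one step that is not purely formal is the proportionality $H'|_{V_{\ell}}=\lambda\,H|_{V_{\ell}}$; it is elementary but relies on the small trace computation above in the secant case. Everything else is bookkeeping with $H$ and $H'$, together with the standard fact that three linear forms in three variables are linearly dependent exactly when the three lines they define are concurrent.
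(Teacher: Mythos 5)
Your route is genuinely different from the paper's (the paper puts the pair $(\cH,\cH')$ into canonical form by quoting the classification of intersections of two Hermitian curves and then computes), and your concluding computations from the identity $H'=\lambda H+\delta\,u\bar u^{T}$ are correct; but the step that establishes this identity has a genuine gap. From $H'|_{V_\ell}=\lambda H|_{V_\ell}$ you conclude that $M:=H'-\lambda H$ ``vanishes identically on $V_\ell$ and hence has rank $\le 1$''. What you have actually shown is only that the sesquilinear form of $M$ vanishes on $V_\ell\times V_\ell$, i.e.\ that $V_\ell$ is totally isotropic for $M$, and this does not force rank $\le 1$: a Hermitian form of rank $2$ in three variables admits totally isotropic $2$-dimensional subspaces. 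Concretely, the Hermitian matrix of the form $X_2^qX_3+X_2X_3^q$ has rank $2$ and its sesquilinear form vanishes identically on the plane $X_3=0$. So your argument does not exclude that $H'-\lambda H$ has rank $2$, and with it the decomposition --- the crux of the whole proof --- is unproved.

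The gap can be closed, but only by using the hypothesis $\cH\cap\cH'=\ell\cap\cH$, which your rank argument never invokes. If $M$ had rank $2$, its zero set would be a Hermitian pencil of $q+1$ lines through its radical $R$, with $R\in\ell$ and $\ell$ one of these lines (the radical lies in any totally isotropic $2$-space). Since a point lies in $\cH\cap\cH'$ exactly when it lies in $\cH$ and in the zero set of $M$, each of the other $q$ lines of the pencil would produce points of $\cH\cap\cH'$ off $\ell$: if $R\notin\cH$, every such line meets $\cH$ in a point different from $R$, hence outside $\ell$; if $R\in\cH$, at most one of them (the unique tangent to $\cH$ at $R$) fails to do so, leaving $q-1\ge 1$ offending lines. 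This contradicts $\cH\cap\cH'\subseteq\ell$, so $M$ has rank $1$, and then your identification $M=\delta\,u\bar u^{T}$ and the two final computations ($\ell^\perp=\ell^{\perp'}$ and $P^\perp\cap P^{\perp'}\in\ell$) go through as written. With this patch your proof becomes a self-contained alternative to the paper's, avoiding the appeal to the classification of Hermitian-curve intersections.
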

\begin{proof}
We may assume without loss of generality that $\cH$ is given by $X_1^qX_2 + X_1X_2^q + X_3^{q+1} = 0$ and that $\cH'$ is given by $X_1^qX_2 + X_1X_2^q + \lambda X_3^{q+1} = 0$, $\lambda \in \GF(q) \setminus \{0, 1\}$, if $|\cH \cap \cH'| = q+1$, or that $\cH'$ is given by $\lambda X_1^{q+1} + X_1^qX_2 + X_1X_2^q + X_3^{q+1} = 0$, $\lambda \in \GF(q) \setminus \{0\}$, if $|\cH \cap \cH'| = 1$. For more details on the intersection of two non--degenerate Hermitian curves see \cite{BE, G, K}. In the former case $\ell$ is the line $X_3 = 0$ and $\ell^\perp = \ell^{\perp'} = (0, 0, 1)$. Let $P = (a, b, 1) \in \cH \setminus \cH'$, then $P^\perp \cap P^{\perp'} = (a^q, -b^q, 0) \in \ell$. In the latter case $\ell$ is the line $X_1 = 0$ and $\ell^\perp = \ell^{\perp'} = (0, 1, 0)$. Let $P = (1, a, b) \in \cH \setminus \cH'$, then $P^\perp \cap P^{\perp'} = (0, -b^q, 1) \in \ell$.
\end{proof}

\begin{prop}\label{HermCurve5}
In $\PG(2, q^2)$, let $\cH(2, q^2)$ be a non--degenerate Hermitian curve and let $P_1, P_2, P_3$ be three non--collinear points of $\PG(2, q^2) \setminus \cH(2, q^2)$ such that the line $P_i P_j$ is tangent to $\cH(2, q^2)$, $1 \le i < j \le 3$. Then there are exactly $q^2-q+1$ non--degenerate Hermitian curves meeting $\cH(2, q^2)$ in one or $q+1$ points and containing $P_1, P_2, P_3$.
\end{prop}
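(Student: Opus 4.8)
The plan is to set up an explicit bijection between the curves we must count and a set of rank-one Hermitian forms, and then to subtract off the ``degenerate'' members of that set, which will turn out to be precisely the $3q$ Hermitian pencils of lines counted in Lemma~\ref{HermCurve3}.

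Choose coordinates with $P_1=(1,0,0)$, $P_2=(0,1,0)$, $P_3=(0,0,1)$ and write $\cH(2,q^2)=\{x : x^\ast Ax=0\}$ for a non-degenerate Hermitian matrix $A$, where $x^\ast=(x_1^q,x_2^q,x_3^q)$. Since $P_i\notin\cH(2,q^2)$ the diagonal entries of $A$ are nonzero, and after a diagonal coordinate change fixing $P_1,P_2,P_3$ we may take $a_{11}=a_{22}=a_{33}=1$; a Hermitian curve through $P_1,P_2,P_3$ is then represented, up to a $\GF(q)^\ast$-factor, by a Hermitian matrix with zero diagonal. For a vector $v=(v_1,v_2,v_3)$ with $v_i^{q+1}=1$ for $i=1,2,3$ set $B_v:=vv^\ast-A$. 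Then $(B_v)_{ii}=v_i^{q+1}-1=0$; $B_v$ has rank $2$ or $3$ (rank $\le1$ would make $A=vv^\ast-B_v$ a sum of at most two rank-one forms); and the $\GF(q)$-pencil $\langle A,B_v\rangle$ contains the rank-one form $vv^\ast=A+B_v$. Modulo scalars of norm $1$ there are $(q+1)^2$ such $v$, and $[v]\mapsto[B_v]$ is injective: from $vv^\ast-A=\mu(ww^\ast-A)$ with $\mu\in\GF(q)^\ast$ one gets $vv^\ast-\mu ww^\ast=(1-\mu)A$, of rank $\le2$ on the left and rank $3$ on the right unless $\mu=1$, whence $vv^\ast=ww^\ast$ and $[v]=[w]$.

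Next I would identify the image. Let $B$ be a Hermitian matrix of rank $2$ or $3$ with zero diagonal, with zero set $\cH'$ (a non-degenerate Hermitian curve if $B$ has rank $3$, a Hermitian pencil of lines if $B$ has rank $2$). I claim the equivalence of: (i) $|\cH(2,q^2)\cap\cH'|\in\{1,q+1\}$; (ii) $\langle A,B\rangle$ contains a rank-one form; (iii) $B$ is a $\GF(q)^\ast$-multiple of some $B_v$. For (ii)$\Rightarrow$(i): the rank-one member is $sA+tB$ with $t\ne0$ (as $A$ has rank $3$); its zero set is a line $\ell$, and the common zeros of the pencil are $\cH(2,q^2)\cap\ell$, a tangent or a secant section, hence $1$ or $q+1$ points. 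For (i)$\Rightarrow$(ii): Lemma~\ref{HermCurve4} provides a line $\ell$ with $\ell\cap\cH(2,q^2)=\ell\cap\cH'=\cH(2,q^2)\cap\cH'$; since a Hermitian form on $\PG(1,q^2)$ is determined up to a $\GF(q)^\ast$-factor by its zero set, the restrictions satisfy $B|_\ell=\nu A|_\ell$ for some $\nu\in\GF(q)^\ast$, so $G:=B-\nu A$ is nonzero and vanishes on $\ell$, hence has rank $\le2$; rank $2$ is impossible (then $Z(G)$ would be $q+1$ lines through a point of $\ell$, forcing $q\ge2$ tangent lines of $\cH(2,q^2)$ through a single point), so $G$ has rank $1$ and lies in the pencil. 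For (ii)$\Rightarrow$(iii): write the rank-one member as $sA+tB=\rho\,ww^\ast$ with $\rho\in\GF(q)^\ast$; then $s,t\ne0$ (as $A$ has rank $3$, $B$ rank $\ge2$) and, comparing diagonals, $w_i^{q+1}=s/\rho$ for all $i$; since $s/\rho$ is a norm, rescaling $w$ gives a vector $v$ with $v_i^{q+1}=1$ and $B=\tfrac st(vv^\ast-A)$. Thus the image of $[v]\mapsto[B_v]$ is the set of non-degenerate Hermitian curves through $P_1,P_2,P_3$ meeting $\cH(2,q^2)$ in $1$ or $q+1$ points (the rank-$3$ values of $B_v$), together with the set of Hermitian pencils of lines through $P_1,P_2,P_3$ meeting $\cH(2,q^2)$ in $q+1$ points (the rank-$2$ values: such a pencil meets $\cH(2,q^2)$ in at least $q+1$ points, so by (ii)$\Rightarrow$(i) in exactly $q+1$).

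Finally I would count the second family. By Lemma~\ref{HermCurve3} it has exactly $3q$ members, and each is a $B_v$: by the proof of that lemma the $q+1$ lines of such a pencil $\cC$ are tangent to $\cH(2,q^2)$, so its vertex $V$ lies off $\cH(2,q^2)$ (only one tangent line passes through a point of $\cH(2,q^2)$, while $q+1\ge3$); hence $V^\perp$ is secant and $\cC$ and $\cH(2,q^2)$ cut $V^\perp$ in the same Baer subline, and the argument of (i)$\Rightarrow$(ii) applied on $V^\perp$ yields a rank-one form in $\langle A,B\rangle$, so (iii) gives $\cC=B_v$. Therefore exactly $3q$ of the $(q+1)^2$ classes $[v]$ give a rank-$2$ form $B_v$, and the remaining $(q+1)^2-3q=q^2-q+1$ give the non-degenerate Hermitian curves through $P_1,P_2,P_3$ meeting $\cH(2,q^2)$ in $1$ or $q+1$ points, as required. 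The step I expect to be the main obstacle is the chain (i)$\Leftrightarrow$(ii)$\Leftrightarrow$(iii): one must treat the rank-$2$ and rank-$3$ cases uniformly, run the ``diagonal argument'' with care (in particular using surjectivity of the norm $\GF(q^2)^\ast\to\GF(q)^\ast$), and invoke Lemma~\ref{HermCurve3} to ensure that no rank-$2$ value $B_v$ is omitted and that the relevant vertices avoid $\cH(2,q^2)$.
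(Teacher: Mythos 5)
Your proposal is correct, but it takes a genuinely different route from the paper. The paper's proof is a direct coordinate computation: it writes the generic Hermitian curve through $P_1,P_2,P_3$ with coefficients $a_{12},a_{13},a_{23}$, uses the tangency hypothesis to normalize $\cH(2,q^2)$ to a one--parameter form (with $\alpha^{q+1}=1$, $\alpha\ne-1$), imposes the full conclusions of Lemma \ref{HermCurve4} (collinearity of the points $L_i^{\perp}\cap L_i^{\perp'}$ and $\ell^{\perp}=\ell^{\perp'}$) to get an algebraic system, and counts its solutions modulo the $\GF(q)^{*}$--scaling. You instead parametrize by the $(q+1)^2$ classes of rank--one forms $vv^{\ast}$ with unit--norm coordinates, characterize the relevant curves and pencils as those whose $\GF(q)$--pencil with $A$ contains a rank--one member, and subtract the $3q$ degenerate members supplied by Lemma \ref{HermCurve3} (which the paper's proof never uses); the tangency hypothesis enters only through that lemma, and from Lemma \ref{HermCurve4} you need only the existence of the common chord $\ell$, which that lemma presupposes exactly as the paper itself does when invoking it. Your route is less computational and makes the answer transparent as $(q+1)^2-3q$. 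In a full write--up you should spell out two points that are only gestured at: in excluding rank $2$ for $G=B-\nu A$, note that on each of the $q$ lines of $Z(G)$ through the vertex other than $\ell$ the forms $A$ and $B$ are proportional, so that line meets $\cH(2,q^2)$ inside $\cH(2,q^2)\cap\cH'\subset\ell$, i.e.\ only at the vertex, forcing $q\ge 2$ tangent lines at one point of $\cH(2,q^2)$; and record the standard fact that a plane Hermitian form of rank at least $2$ is determined by its zero set up to a $\GF(q)^{*}$--factor, so that counting classes $[B_v]$ really counts curves and pencils of lines.
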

\begin{proof}
Assume that $P_1 = (1,0,0)$, $P_2 = (0,1,0)$, $P_3 = (0,0,1)$. If $\cH$ is a non--degenerate Hermitian curve containing $P_1, P_2, P_3$, then $\cH$ is given by
$$
a_{12} X_1 X_2^q + a_{12}^q X_1^qX_2 + a_{13} X_1X_3^q + a_{13}^q X_1^qX_3 + a_{23} X_2X_3^q + a_{23}^q X_2^qX_3 = 0,
$$
for some $a_{12}, a_{13}, a_{23} \in \GF(q^2)$, with
\begin{equation} \label{cond1}
a_{12} a_{13}^q a_{23} + a_{12}^q a_{13} a_{23}^q \ne 0.
\end{equation}
On the other hand, if $\cH(2, q^2)$ is a non--degenerate Hermitian curve such that $P_1, P_2, P_3 \in \PG(2, q^2) \setminus \cH(2, q^2)$ and the line $P_i P_j$ is tangent to $\cH(2, q^2)$, $1 \le i < j \le 3$, then $\cH(2, q^2)$ is given by
$$
y^{q+1} X_1^{q+1} + z^{q+1} X_2^{q+1} + X_3^{q+1} - xz^{q+1} X_1X_2^q - x^qz^{q+1} X_1^qX_2 - y X_1X_3^q - y^q X_1^qX_3 - z X_2X_3^q - z^q X_2^qX_3 = 0,
$$
for some $x, y, z \in \GF(q^2)$ such that $y^{q+1} = x^{q+1} z^{q+1}$ and $y \ne - x z$. By using the projectivity $X'_1 = y X_1, X'_2 = zX_2, X'_3 = X_3$, we may assume that $\cH(2, q^2)$ is given by
$$
X_1^{q+1} + X_2^{q+1} + X_3^{q+1} - \alpha X_1X_2^q - \alpha^q X_1^qX_2 - X_1X_3^q - X_1^qX_3 - X_2X_3^q - X_2^qX_3 = 0,
$$
for some $\alpha \in \GF(q^2)$ such that $\alpha^{q+1} = 1$ and $\alpha \ne -1$. Then the lines $P_1 P_2$, $P_1 P_3$ and $P_2 P_3$ are tangent to $\cH(2, q^2)$ at the points $L_3 = (1, \alpha, 0)$, $L_2 = (1, 0, 1)$ and $L_1 = (0, 1, 1)$, respectively. Let $\perp$ and $\perp'$ be the polarities defining $\cH(2, q^2)$ and $\cH$, respectively. If $|\cH(2, q^2) \cap \cH| \in \{1, q+1\}$, according to Lemma \ref{HermCurve4}, we have that $L_i^\perp \cap L_i^{\perp'} \in \ell$, $i = 1,2,3$, where $\ell$ is the unique line of $\PG(2, q^2)$ such that $\ell \cap \cH(2, q^2) = \ell \cap \cH = \cH(2, q^2) \cap \cH$. Denote by $Z_i$ the point $L_i^\perp \cap L_i^{\perp'}$, $i = 1,2,3$ and let $L$ be the point $\ell^\perp$. Then
$$
Z_1 = (0, -a_{23}^q, a_{23}), Z_2 = (-a_{13}^q, 0, a_{13}), Z_3 = (-a_{12}^q, \alpha^q a_{12}, 0)
$$
are on a line if and only if
\begin{equation}\label{cond2}
a_{12}^q a_{13} a_{23}^q - \alpha^q a_{12} a_{13}^q a_{23} = 0.
\end{equation}
Note that $L = Z_1^\perp \cap Z_2^\perp = (a_{13}(a_{23}+a_{23}^q), \alpha a_{23} (a_{13}+a_{13}^q), \alpha a_{13}^q a_{23} + a_{13} a_{23}^q)$ and since $\ell^\perp = \ell^{\perp'}$, we have that $Z_1, Z_2 \in L^{\perp'}$. Some calculations show that $Z_1, Z_2 \in L^{\perp'}$ if and only if
\begin{align}
& (a_{23} + a_{23}^q)(a_{23} a_{13}^q - a_{23}^q a_{12}^q) + a_{23}^{q+1}(\alpha^q a_{23}^q - a_{23}) = 0, \label{cond3}\\
& (a_{13} + a_{13}^q)(a_{23}^q a_{13} - a_{13}^q a_{12}) + a_{13}^{q+1}(\alpha a_{13}^q - a_{13}) = 0. \label{cond4}
\end{align}
Since $\alpha \ne -1$, it follows from \eqref{cond4} that
\begin{equation} \label{cond0}
a_{13} + a_{13}^q \ne 0.
\end{equation}
Hence from \eqref{cond4}, we have that
\begin{equation}\label{coeff1}
a_{12} = \frac{a_{13} a_{23}^q \left( a_{13} + a_{13}^q \right) + a_{13}^{q+1} \left( \alpha a_{13}^q - a_{13} \right)}{a_{13}^q \left( a_{13}+a_{13}^q \right)}.
\end{equation}
By substituting \eqref{coeff1} in \eqref{cond2}, we get that one of the following has to be satisfied:
\begin{align}
& a_{13}^q - \alpha^q a_{13} = 0, \label{cond5}\\
& a_{23}^{q+1}(a_{13} + a_{13}^q) - a_{13}^{q+1}(a_{23} + a_{23}^q) = 0. \label{cond6}
\end{align}

Assume that \eqref{cond6} is satisfied. Since $a_{23} + a_{23} \ne 0$, then
\begin{equation}\label{coeff2}
a_{13} = \frac{a_{23}^{q+1}}{a_{23} + a_{23}^q} \left( \xi + 1 \right),
\end{equation}
where $\xi \in \GF(q^2), \xi^{q+1} = 1$, with $\xi \ne -1$, otherwise $a_{13} = 0$, contradicting \eqref{cond1}. By taking into account \eqref{coeff2}, equation \eqref{coeff1} becomes
\begin{equation}\label{coeff1_new}
a_{12} = \frac{a_{23}^q\left( \xi a_{23}^q + \alpha a_{23} \right)}{a_{23} + a_{23}^q}.
\end{equation}
Moreover, if \eqref{coeff2} and \eqref{coeff1_new} hold true, then \eqref{cond3} is satisfied, whereas some calculations show that condition \eqref{cond1} is satisfied whenever $\xi a_{23}^q + \alpha a_{23} \ne 0$, i.e,
\begin{equation} \label{cond7}
a_{23}^{q-1} \ne -\xi^q \alpha.
\end{equation}
The equation $X^{q-1} = -\xi^q \alpha$ has $q-1$ solutions in $\GF(q^2)$ since $\left(- \xi^q \alpha \right)^{q+1} = 1$.

Assume that \eqref{cond5} holds true. Thus it turns out that necessarily $a_{12} = \alpha a_{23}^q$ and $a_{13} = a_{23}^{q+1}(\alpha+1)/\left( a_{23} +a_{23}^q \right)$ and hence we retrieve a particular solution of the previous case (the one with $\xi = \alpha$).

Observe that for a fixed $\xi \in \GF(q^2)$, with $\xi^{q+1} = 1$, $\xi \ne -1$, we have that
$$
|\{a_{23} \in \GF(q^2) \;\; | \;\; a_{23} + a_{23}^q \ne 0, a_{23}^{q-1} \ne -\xi^q \alpha \}| =
\begin{cases}
q(q-1) & \mbox{ if } \xi = \alpha, \\
(q-1)^2 & \mbox{ if } \xi \ne \alpha.
\end{cases}
$$
Furthermore for a fixed $a_{23} \in \GF(q^2)$ satisfying \eqref{cond0} and \eqref{cond7}, then $a_{12}$ and $a_{13}$ are uniquely determined by \eqref{coeff1_new} and \eqref{coeff2}, respectively, and, if $\rho \in \GF(q) \setminus \{0\}$, the values $a_{23}, \rho a_{23}$ give rise to the same Hermitian curve.
\end{proof}

\begin{lemma}\label{HermSur}
In $\PG(3, q^2)$, let $\cH(3, q^2)$ be a non--degenerate Hermitian surface with associated polarity $\perp$ and let $\gamma = P^\perp$ be a plane secant to $\cH(3, q^2)$. If $P'$ is a point of $\PG(3, q^2) \setminus \left( \cH(3, q^2) \cup \gamma \cup \{P\} \right)$, then the points of $\gamma$ lying on a line passing through $P'$ and tangent to $\cH(3, q^2)$ form a non--degenerate Hermitian curve $\cH'$ of $\gamma$ such that $\gamma \cap \cH' \cap \cH(3, q^2) \subset \langle P P' \rangle^\perp$. Viceversa, if $\cH'$ is a non--degenerate Hermitian curve of $\gamma$ meeting $\gamma \cap \cH(3, q^2)$ in one or $q+1$ points and $\ell$ is the (unique) line of $\gamma$ such that $\ell \cap \cH' = \ell \cap \cH(3, q^2) = \gamma \cap \cH' \cap \cH(3, q^2)$, then there are exactly $q+1$ points $P'$ of $\ell^\perp \setminus \left( \cH(3, q^2) \cup \gamma \cup \{P\} \right)$ such that the lines joining $P'$ with a point of $\cH'$ are tangent to $\cH(3, q^2)$.
\end{lemma}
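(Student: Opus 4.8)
The forward direction is a direct polarity computation. Since $P'\notin\cH(3,q^2)$, a line through $P'$ is tangent to $\cH(3,q^2)$ exactly when it has the form $\langle P',T\rangle$ with $T\in P'^\perp\cap\cH(3,q^2)$: a tangent line at $T$ lies in the tangent plane $T^\perp$, so it contains $P'$ only if $P'\in T^\perp$, i.e. $T\in P'^\perp$; conversely, for such $T$ the line $\langle P',T\rangle\subseteq T^\perp$ is not a generator (it meets $P'\notin\cH(3,q^2)$) and hence is tangent at $T$. Thus the tangent lines through $P'$ form a cone over the non-degenerate Hermitian curve $\cH'':=P'^\perp\cap\cH(3,q^2)$ of the plane $P'^\perp$, and the points of $\gamma$ on such a line are precisely the image $\cH'$ of $\cH''$ under projection from $P'$ onto $\gamma$. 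Because $P'\notin\cH(3,q^2)$ we have $P'\notin P'^\perp$, and $P'\notin\gamma$ by hypothesis, so this projection is a projectivity between two planes; hence $\cH'$ is a non-degenerate Hermitian curve of $\gamma$. Finally, a point $X\in\gamma\cap\cH(3,q^2)$ lies on $\cH'$ iff $\langle P',X\rangle$ is tangent at $X$, iff $X\in P'^\perp$; so $\gamma\cap\cH'\cap\cH(3,q^2)=\cH(3,q^2)\cap P^\perp\cap P'^\perp=\cH(3,q^2)\cap\langle PP'\rangle^\perp$, which proves the first claim.

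For the converse I first reduce everything to the line $\ell^\perp$ (note $P\in\ell^\perp$ because $\ell\subseteq\gamma=P^\perp$). Let $P'$ be in the stated region and suppose every line joining $P'$ to a point of $\cH'$ is tangent to $\cH(3,q^2)$. By the forward direction the non-degenerate Hermitian curve $\cH'_{P'}$ of $\gamma$ formed by the points of $\gamma$ on tangent lines through $P'$ contains $\cH'$; since both curves have $q^3+1$ points, $\cH'_{P'}=\cH'$, and therefore $\cH(3,q^2)\cap\langle PP'\rangle^\perp=\gamma\cap\cH'\cap\cH(3,q^2)=\cH'\cap\cH(3,q^2)$. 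If $|\cH'\cap\cH(3,q^2)|=q+1$ this set is a Baer subline spanning $\ell$, so the line $\langle PP'\rangle^\perp$ contains $\ell$, hence equals $\ell$, hence $P'\in\ell^\perp$. If $\cH'\cap\cH(3,q^2)=\{T_0\}$, then $\langle PP'\rangle^\perp$ meets $\cH(3,q^2)$ only in $T_0$, so it is a tangent line at $T_0$; thus $\langle PP'\rangle^\perp\subseteq T_0^\perp$ and $T_0\in\langle PP'\rangle$, while on the other hand the tangent to $\cH'=\cH'_{P'}$ at $T_0$ is the projection from $P'$ of the tangent $T_0^\perp\cap P'^\perp$ to $\cH''$ at $T_0$, and since $P'\in T_0^\perp$ this span equals $T_0^\perp$, so $\ell=T_0^\perp\cap\gamma$ and $\ell^\perp=\langle T_0,P\rangle\ni P'$. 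Either way every valid $P'$ lies on $\ell^\perp$.

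Conversely, for each $P'\in\ell^\perp\setminus(\cH(3,q^2)\cup\gamma\cup\{P\})$ one has $\langle PP'\rangle=\ell^\perp$, hence $\cH'_{P'}\cap\cH(3,q^2)=\cH(3,q^2)\cap\ell=\ell\cap\cH(3,q^2)$, so $\cH'_{P'}$ is always one of the non-degenerate Hermitian curves of $\gamma$ meeting $\cH(3,q^2)$ exactly in $\ell\cap\cH(3,q^2)$; by the classification of intersections of two Hermitian curves (the normal forms in the proof of Lemma \ref{HermCurve4}) there are $q-2$ such curves when $|\ell\cap\cH(3,q^2)|=q+1$ and $q-1$ when $|\ell\cap\cH(3,q^2)|=1$, whereas $\ell^\perp\setminus(\cH(3,q^2)\cup\gamma\cup\{P\})$ has $q^2-q-2$, respectively $q^2-1$, points. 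So it suffices to show that $P'\mapsto\cH'_{P'}$ is surjective with all fibres of size $q+1$. I would do this by an explicit computation: normalise $\cH(3,q^2)$, $\gamma$ and $\ell$ so that $\cH(3,q^2):X_1^{q+1}+X_2^{q+1}+X_3^{q+1}+X_4^{q+1}=0$ with $\ell:X_3=X_4=0$ (hence $\ell^\perp:X_1=X_2=0$) in the secant case, and $\cH(3,q^2):X_1^{q+1}+X_2^{q+1}+X_3^qX_4+X_3X_4^q=0$ with an analogous choice in the tangent case; parametrise $P'$ on $\ell^\perp$ by a single coordinate $b$; compute the equation of $\cH'_{P'}$ by eliminating the parameter along the projection; and observe that, after an affine shift of $b$, the curve $\cH'_{P'}$ depends on $b$ only through the norm of a scalar (of the form $b^{q+1}$, resp. $(b-v)^{q+1}$), a value attained exactly $q+1$ times. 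Counting the admissible $b$ then forces each of the $q-2$, resp. $q-1$, target curves to be hit exactly $q+1$ times, which is the assertion.

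The main obstacle is this final computation: writing the projected curve $\cH'_{P'}$ explicitly and recognising it, among the already classified curves through $\ell\cap\cH(3,q^2)$, as a function of a single norm. Everything else is a routine passage between a point, its polar plane, and the projection it induces; the reduction to the line $\ell^\perp$ is what makes the counting feasible, since a priori the valid points $P'$ could be scattered over all of $\PG(3,q^2)$.
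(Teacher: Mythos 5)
Your forward direction is correct and is in fact cleaner than the paper's treatment: you identify the tangency points as $P'^\perp\cap\cH(3,q^2)$ and use the perspectivity from $P'$ between the planes $P'^\perp$ and $\gamma$, whereas the paper obtains the same conclusions by writing the curve explicitly in coordinates via \cite[Lemma 2.3]{HT}. Your set-up for the converse is also sound: the reduction to $\ell^\perp$, the counts $q^2-q-2$ resp. $q^2-1$ for the admissible points of $\ell^\perp$, and the counts $q-2$ resp. $q-1$ for the candidate curves (which do follow from the normal forms behind Lemma \ref{HermCurve4}, since any admissible $\cH'$ must lie in the pencil spanned by $\gamma\cap\cH(3,q^2)$ and the repeated line $\ell$) are all correct and consistent with $(q-2)(q+1)=q^2-q-2$ and $(q-1)(q+1)=q^2-1$.

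The genuine gap is that the decisive step of the converse is not carried out: you state that one should "normalise, parametrise $P'$ by a coordinate $b$, eliminate, and observe that $\cH'_{P'}$ depends on $b$ only through a norm", and you yourself call this "the main obstacle". But this computation is the proof; it is exactly what the paper does (with $\cH(3,q^2): X_1^{q+1}+X_2^{q+1}+X_3^{q+1}+X_4^{q+1}=0$, $\gamma: X_4=0$, $P'=(1,0,0,\lambda)$ or $(1,\xi,0,\lambda)$, and \cite[Lemma 2.3]{HT} yielding explicit equations for $\cH'_{P'}$ in which $\lambda$ enters only via $\lambda^{q+1}$). Without it, neither the fibre size $q+1$ nor surjectivity onto the $q-2$ (resp. $q-1$) admissible curves is established. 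Moreover, even granting "dependence only through the norm", your final sentence ("counting the admissible $b$ then forces each target curve to be hit exactly $q+1$ times") has a logical slip: dependence through the norm only makes each fibre a union of norm-classes, so a priori one curve could be hit $2(q+1)$ times and another missed entirely, which would falsify the "exactly $q+1$" claim for the missed curve. You additionally need that distinct norm values give distinct curves; this injectivity is immediate from the explicit equations (the coefficient ratios determine $\lambda^{q+1}$), but it is not a consequence of the counting, so it must be checked as part of the computation you have deferred.
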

\begin{proof}
Let $\cH(3, q^2)$ be the Hermitian surface of $\PG(3, q^2)$ given by $X_1^{q+1}+X_2^{q+1}+X_3^{q+1}+X_4^{q+1} = 0$ and let $\gamma$ be the plane $X_4 = 0$. Let $P'$ be a point of $\PG(3, q^2) \setminus \left( \cH(3, q^2) \cup \gamma \cup \{P\} \right)$. If $PP'$ is tangent to $\cH(3, q^2)$, then we may assume that $P P' \cap \gamma = (1, \xi, 0, 0)$, for some $\xi \in \GF(q^2)$ such that $\xi^{q+1} = -1$. Then $P' = (1, \xi, 0, \lambda)$ where $\lambda \in \GF(q^2) \setminus \{0\}$. Similarly if $PP'$ is secant to $\cH(3, q^2)$, then we may assume that $P P' \cap \gamma = (1, 0, 0, 0)$. In this case $P' = (1, 0, 0, \lambda)$ where $\lambda \in \GF(q^2) \setminus \{0\}$ and $\lambda^{q+1} \ne -1$. From \cite[Lemma 2.3]{HT}, the line joining $P'$ and $Q = (x_1, x_2, x_3, 0) \in \gamma$ is tangent to $\cH(3, q^2)$ if and only if $Q \in \cH'$, where $\cH'$ is the non--degenerate Hermitian curve of $\gamma$ given by
$$
\left( \lambda^{q+1} - 1 \right) x_1^{q+1} + \left( \lambda^{q+1} + 1 \right) x_2^{q+1} + \lambda^{q+1} x_3^{q+1} - \xi^q x_1^q x_2 - \xi x_1 x_2^q = 0
$$
if $|PP' \cap \cH(3, q^2)| = 1$ or by
$$
\lambda^{q+1} x_1^{q+1} + \left( \lambda^{q+1} + 1 \right) \left( x_2^{q+1} + x_3^{q+1} \right) = 0
$$
if $|PP' \cap \cH(3, q^2)| = q+1$.
Note that $\gamma \cap \cH' \cap \cH(3, q^2) \subset \langle P P' \rangle^\perp$ and by replacing $\lambda$ with $\lambda'$, where $\lambda^{q+1} = \lambda'^{q+1}$ we retrieve the same Hermitian curve $\cH'$.
\end{proof}

\section{Graphs cospectral with $\NU(3, q^2)$}

Let $\Pi$ be a finite projective plane of order $q^2$ and let $\cU$ be a unital of $\Pi$. A line of $\Pi$ is said to be {\em tangent} or {\em secant} to $\cU$ according as it meets $\cU$ in $1$ or $q+1$ points, respectively. There are $q^3 + 1$ tangent lines and $q^4 - q^3 + q^2$ secant lines. In particular, through each point of $\cU$ there pass $q^2$ secant lines and one tangent line, while through each point of $\Pi \setminus \cU$ there pass $q + 1$ tangent lines and $q^2 - q$ secant lines. Starting from $\cU$, it is possible to define a unital $\cU^*$ of the dual plane $\Pi^*$ of $\Pi$. This can be done by taking the tangent lines to $\cU$ as the points of $\cU^*$ and the points of $\Pi \setminus \cU$ as the secant lines to $\cU^*$, where incidence is given by reverse containment. Thus $\cU^*$ is a unital of $\Pi^*$, called the {\em dual unital} to $\cU$. 

Let $\Gamma_{\cU}$ be the graph whose vertices are the points of $\Pi \setminus \cU$ and two vertices $P_1, P_2$ are adjacent if the line joining $P_1$ and $P_2$ is tangent to $\cU$.

\begin{prop}
The graph $\Gamma_{\cU}$ is strongly regular with parameters $(q^2(q^2-q+1), (q+1)(q^2-1), 2(q^2-1), (q+1)^2)$.
\end{prop}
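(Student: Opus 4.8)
The plan is to verify the four parameters directly; everything reduces to counting tangent lines, and the only genuinely delicate point is understanding where the intersection points of two tangent lines can lie. First I would count vertices: since $\Pi$ has $q^4+q^2+1$ points and $\cU$ has $q^3+1$ points, the number of vertices of $\Gamma_{\cU}$ is $q^4+q^2+1-(q^3+1)=q^2(q^2-q+1)=v$. For the degree, fix a vertex $P\in\Pi\setminus\cU$. Through $P$ there pass exactly $q+1$ tangent lines, each carrying $q^2+1$ points, one on $\cU$ and hence $q^2$ off $\cU$, i.e. $q^2-1$ vertices besides $P$; since two distinct lines through $P$ meet only in $P$, these vertex sets are pairwise disjoint and $k=(q+1)(q^2-1)$.

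The key step is the following observation. Let $P_1,P_2$ be distinct vertices and put $\ell=P_1P_2$. A vertex $Q\notin\ell$ is a common neighbour of $P_1$ and $P_2$ precisely when $QP_1$ and $QP_2$ are tangent, that is, $Q$ is the intersection point $m_1\cap m_2$ of a tangent line $m_1$ through $P_1$ and a tangent line $m_2$ through $P_2$ with $m_1,m_2\ne\ell$. I claim every such intersection point automatically avoids $\cU$ and avoids $\ell$, and that distinct pairs $(m_1,m_2)$ give distinct points $Q$. Indeed, $m_1\ne m_2$ (a common tangent through $P_1$ and $P_2$ would be $\ell$, which is not tangent if secant, and is excluded if tangent), so $Q=m_1\cap m_2$ is a single point; if $Q$ lay on $\cU$, then both $m_1$ and $m_2$ would be the unique tangent line to $\cU$ at $Q$, forcing $m_1=m_2=\ell$, a contradiction; if $Q$ lay on $\ell$, then $m_1$ would contain $Q$ and $P_1$, so $m_1=\ell$ when $Q\ne P_1$, while $Q=P_1$ would give $P_1\in m_2=\ell$, both impossible; and if $(m_1,m_2)$ and $(m_1',m_2')$ shared $Q$ with $m_1\ne m_1'$, then $Q=m_1\cap m_1'=P_1\in m_2=\ell$, impossible. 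Also $P_i\notin m_{3-i}$ forces $Q\ne P_1,P_2$. Hence the common neighbours of $P_1,P_2$ lying off $\ell$ are in bijection with the pairs $(m_1,m_2)$ of tangent lines through $P_1$, resp.\ $P_2$, with $m_1,m_2\ne\ell$.

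It remains to split into the adjacent and non-adjacent cases; this is pure bookkeeping. If $P_1\sim P_2$, then $\ell$ is tangent, so $\ell$ carries $q^2$ vertices, of which the $q^2-2$ distinct from $P_1,P_2$ are common neighbours via $\ell$ itself; off $\ell$ there are $q$ tangent lines through $P_1$ other than $\ell$ and $q$ through $P_2$ other than $\ell$, contributing $q^2$ further common neighbours, disjoint from the previous ones; hence $\lambda=(q^2-2)+q^2=2(q^2-1)$. If $P_1\not\sim P_2$, then $\ell$ is secant, so no point of $\ell$ is a common neighbour (the line $\ell$ is not tangent) and no tangent line through $P_1$ or $P_2$ equals $\ell$; thus all $q+1$ tangent lines through $P_1$ and all $q+1$ through $P_2$ are admissible, and $\mu=(q+1)^2$. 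Finally, $\mu=(q+1)^2>0$ shows that any two non-adjacent vertices have a common neighbour, so $\Gamma_{\cU}$ has diameter at most $2$ and is connected; it is neither complete nor empty since $\cU$ has both tangent and secant lines. This establishes that $\Gamma_{\cU}$ is strongly regular with parameters $(q^2(q^2-q+1),(q+1)(q^2-1),2(q^2-1),(q+1)^2)$. I expect the main (still modest) obstacle to be the claim that $m_1\cap m_2$ never lands on $\cU$; once that is in place the rest is routine.
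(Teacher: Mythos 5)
Your proof is correct and follows essentially the same route as the paper, which simply asserts the counts $k=(q+1)(q^2-1)$, $\lambda=2(q^2-1)$, $\mu=(q+1)^2$ from the standard tangent-line counts through points off a unital; you merely spell out the bookkeeping (the bijection between off-$\ell$ common neighbours and pairs of tangent lines, and the points of $\ell$ itself in the adjacent case). The only cosmetic slip is the phrase ``forcing $m_1=m_2=\ell$'' when $Q\in\cU$: uniqueness of the tangent at $Q$ gives $m_1=m_2$, which already contradicts $m_1\neq m_2$, and there is no reason this common line should be $\ell$.
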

\begin{proof}
Since every point of $\Pi \setminus \cU$ lies on $q+1$ lines that are tangent to $\cU$, the graph $\Gamma_{\cU}$ is $(q+1)(q^2-1)$--regular. Consider two distinct vertices $u_1, u_2$ of $\Gamma_{\cU}$ corresponding to the points $P_1, P_2$ of $\Pi \setminus \cU$. If $u_1, u_2$ are adjacent, then the line $P_1 P_2$ is tangent to $\cU$ and $u_1, u_2$ have $2(q^2-1)$ common neighbours. If $u_1, u_2$ are not adjacent, then the line $P_1 P_2$ is secant to $\cU$ and $u_1, u_2$ have $(q+1)^2$ common neighbours.
\end{proof}

A {\em clique} of a graph is a subset of pairwise adjacent vertices. A clique is said to be {\em maximal} if it is maximal with respect to set theoretical inclusion.

\begin{prop}\label{cliques}
The graph $\Gamma_{\cU}$ contains a class of $q^3+1$ maximal cliques of size $q^2$ and a class of $q^2(q^3+1)(q^2-q+1)$ maximal cliques of size $q+2$.
\end{prop}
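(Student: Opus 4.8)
### Proof proposal for Proposition~\ref{cliques}

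**Overview.** The plan is to identify two natural families of cliques in $\Gamma_{\cU}$ coming from the geometry of the unital, check that they are cliques, count them, and finally show that each is maximal by a short "no common extension" argument. A clique in $\Gamma_{\cU}$ is a set of points of $\Pi \setminus \cU$ that are pairwise joined by tangent lines; so the two obstructions to a set being a large clique are (i) two of the points lie on a secant line, or (ii) three of them lie on a tangent line but that tangent line meets $\cU$ outside the allowed configuration. This dichotomy is exactly what Lemma~\ref{HermCurve1} (in the classical case) captures, and it suggests the two clique types.

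**The cliques of size $q^2$.** First I would take a tangent line $t$ to $\cU$, meeting $\cU$ in the single point $T$. The $q^2$ points of $t \setminus \{T\}$ lie in $\Pi \setminus \cU$, and any two of them are joined by $t$ itself, which is tangent; hence $t \setminus \{T\}$ is a clique of size $q^2$. Since there are $q^3+1$ tangent lines, this gives a class of $q^3+1$ such cliques. For maximality: a point $P \in \Pi \setminus (\cU \cup t)$ extending this clique would have to be joined by a tangent line to each of the $q^2$ points of $t \setminus \{T\}$; but $P$ lies on only $q+1$ tangent lines, each meeting $t$ in at most one point, so $P$ can "see" at most $q+1 < q^2$ of these points along tangents (for $q \ge 2$). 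Hence the clique is maximal.

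**The cliques of size $q+2$.** Next I would fix a point $P_1 \in \Pi \setminus \cU$ and one of its $q+1$ tangent lines $t$, with $t \cap \cU = \{T\}$. Choose a second point $P_2 \in t \setminus \{T, P_1\}$. The Baer-subline structure is not available for an arbitrary unital, so instead I would argue directly: consider the $q+1$ tangent lines through $P_1$; they meet $\cU$ in $q+1$ distinct points forming the "tangent pencil" at $P_1$, and similarly for $P_2$. I would select the $q$ points of $t \setminus \{T, P_1\}$... more precisely, the candidate clique is obtained by adjoining to $\{P_1, P_2\}$ a suitable set so that all of $t \setminus \{T\}$ together with one further point off $t$ is covered; the cleanest description is: a clique of size $q+2$ consists of the $q+1$ points of a tangent line $t$ other than its point of $\cU$, wait — that has size $q^2$. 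Let me instead describe it as: take a tangent line $t$ with $t\cap\cU=\{T\}$, take $q+1$ of the points of $t\setminus\{T\}$ lying on a common "secant-free" sub-configuration, and one extra point $P$ off $t$ joined to all of them by tangents. I would count these by counting flags: choose the extra point $P$ (there are $q^2(q^2-q+1)$ of them), then through $P$ choose an ordered structure of tangent lines landing in the required pattern; Lemma~\ref{HermCurve1} gives $q$ choices per Baer subline in the classical case, and the orbit-counting / tactical-configuration argument there generalizes to give the factor $(q^3+1)(q^2-q+1)$ after dividing by the clique's internal symmetry. For maximality I would again count tangent lines through a hypothetical new vertex and show $q+1$ tangents cannot reach all $q+2$ existing vertices unless it already lies in the clique.

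**Main obstacle.** The hard part will be the second count: pinning down the precise combinatorial object (which $(q+2)$-subsets are cliques) for a general unital, where the Baer-subline arguments of Lemma~\ref{HermCurve1} are unavailable, and then showing the count $q^2(q^3+1)(q^2-q+1)$ is forced. I expect the right approach is a double-counting of incident pairs (clique of size $q+2$, tangent line it contains) or (clique, vertex in it): each such clique contains a unique tangent line $t$ carrying $q+1$ of its points plus one outside point $P$, so the count factors as (number of tangent lines) $\times$ (number of valid $P$ for each, times number of valid $(q+1)$-subsets) divided by overcounting; matching this to $q^2(q^3+1)(q^2-q+1)$ is the delicate bookkeeping step. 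Everything else — verifying the clique property and maximality — is the short tangent-counting argument $q+1 < q^2$ indicated above.
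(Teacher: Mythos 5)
Your first half is fine and is exactly the paper's first case: the $q^2$-cliques are the tangent lines minus their point of tangency, there are $q^3+1$ of them, and maximality follows since a vertex off such a line lies on only $q+1<q^2$ tangent lines. The genuine gap is the second class, which you never pin down. The paper's size-$(q+2)$ cliques are simply $\{P\}\cup\bigl(N_{\Gamma_{\cU}}(P)\cap\ell\bigr)$ for a tangent line $\ell$ and a vertex $P\notin\ell$, and the observation that removes your ``main obstacle'' is elementary unital combinatorics, not geometry of Baer sublines: through a point of $\cU$ there passes exactly one tangent line, so if $\ell\cap\cU=\{T\}$ and $P\notin\ell\cup\cU$, none of the $q+1$ tangent lines through $P$ passes through $T$; they meet $\ell$ in $q+1$ distinct points of $\ell\setminus\{T\}$, whence $|N_{\Gamma_{\cU}}(P)\cap\ell|=q+1$ exactly. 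No appeal to Lemma \ref{HermCurve1}, to orbit/tactical-configuration counts, or to a ``secant-free sub-configuration'' is needed, and the argument works in any projective plane of order $q^2$. With this description the cliques of the second type are in bijection with pairs $(\ell,P)$, $\ell$ tangent and $P$ a vertex off $\ell$, so their number is $(q^3+1)\,q^3(q-1)$. Be aware that this does \emph{not} equal the printed $q^2(q^3+1)(q^2-q+1)$, which is (number of tangent lines)$\times$(number of all vertices), i.e.\ it fails to exclude the $q^2$ vertices on $\ell$; for $q=2$ the graph $\NU(3,4)$ is complete multipartite $K_{3,3,3,3}$ with $81$ maximal $4$-cliques, of which $9$ come from tangent lines, leaving $72=(q^3+1)q^3(q-1)$, not $108$. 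So the ``delicate bookkeeping'' you defer cannot be made to match the stated formula; the honest outcome of your double count is a corrected constant, not a confirmation.

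The second gap is your maximality sketch for the $(q+2)$-cliques. Counting tangents through a hypothetical new vertex $Q$ does not give a contradiction: if $Q\notin\ell$, its $q+1$ tangent lines can hit the $q+1$ clique points $u_1,\dots,u_{q+1}$ on $\ell$, and the extra point $P$ may lie on one of those same tangents (namely when $Q$ is on the line $\langle P,u_i\rangle$), so $q+1$ tangents can a priori reach all $q+2$ vertices. What is actually needed: such a $Q$ would satisfy $N_{\Gamma_{\cU}}(Q)\cap\ell=N_{\Gamma_{\cU}}(P)\cap\ell$ with $PQ$ tangent, and choosing $u_i,u_j$ off the line $PQ$ produces four points, no three collinear, whose six joining lines are all tangent --- a dual O'Nan configuration. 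For the classical unital this is impossible, which settles maximality in the case the paper later uses (Corollary \ref{cor1}); for a general unital, where dual O'Nan configurations can exist (Corollary \ref{cor2}), maximality of these sets is not a counting triviality and must be argued, not asserted.
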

\begin{proof}
In the first case, a clique corresponds to the points on a tangent line with the tangent point excluded. In the second case, it corresponds to $q+1$ points on a tangent line $\ell$, and the other point off $\ell$. In both cases these cliques are maximals.
\end{proof}
 
Let $\Pi = \PG(2, q^2)$ and let $\cU$ be the classical unital $\cH(2, q^2)$. In this case $\Gamma_{\cU}$ is denoted by $\NU(3, q^2)$. Since a dual O'Nan configuration cannot be embedded in $\cH(2, q^2)$, we have the following (see also \cite[Corollary 3]{BF}).

\begin{cor}\label{cor1}
The maximal cliques of the graph $\NU(3,q^2)$ are exactly those described in Proposition \ref{cliques}.
\end{cor}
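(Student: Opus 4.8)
The plan is to establish the nontrivial inclusion, namely that every maximal clique of $\NU(3,q^2)$ is one of the two families produced in Proposition~\ref{cliques}; the reverse inclusion (that these are maximal cliques) is already asserted in Proposition~\ref{cliques} and, as noted below, follows from the same argument. The one geometric fact I would use is that $\cH(2,q^2)$ contains no dual O'Nan configuration, i.e. no four points of $\PG(2,q^2)\setminus\cH(2,q^2)$, no three collinear, whose six joining lines are all tangent to $\cH(2,q^2)$. In graph terms: in any clique of $\NU(3,q^2)$, among any four vertices some three are collinear.

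First I would extract the combinatorial shape of a clique. Let $C$ be a clique that is not contained in a line, so it contains three non-collinear vertices $A,B,D$. Any other vertex of $C$ must lie on one of the lines $AB$, $AD$, $BD$, for otherwise it would complete $\{A,B,D\}$ to a dual O'Nan configuration. Moreover $C$ cannot meet two of these three lines away from $\{A,B,D\}$: if, say, $E\in C\cap(AB\setminus\{A,B\})$ and $F\in C\cap(AD\setminus\{A,D\})$, then a direct check shows that no three of $B,D,E,F$ are collinear, so $\{B,D,E,F\}$ would be a dual O'Nan configuration; the other two pairs of lines are symmetric. Hence all vertices of $C$ except one lie on a single line, so for an arbitrary clique $C$ we have: either $C$ lies on a line, or $C=(C\cap\ell)\cup\{P\}$ with $\ell$ a line, $P\notin\ell$, and $|C\cap\ell|\ge2$.

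Next I would run the counting argument in each case. If $C\subseteq\ell$, then $\ell$ carries an edge, hence is tangent, so $C\subseteq\ell\setminus\cH(2,q^2)$, which is itself a clique of size $q^2$; maximality forces $C=\ell\setminus\cH(2,q^2)$, a clique of the first type. Otherwise $\ell$ is tangent (it carries the edges inside $C\cap\ell$), $P\notin\cH(2,q^2)$, and each point of $C\cap\ell$ is joined to $P$ by a tangent line. Let $T=\ell\cap\cH(2,q^2)$; the unique tangent to $\cH(2,q^2)$ at $T$ is $\ell$ itself, which misses $P$, so no tangent line through $P$ passes through $T$. Therefore the $q+1$ tangent lines through $P$ meet $\ell$ in $q+1$ distinct points, all lying in $\ell\setminus\cH(2,q^2)$, so exactly $q+1$ points of $\ell\setminus\cH(2,q^2)$ are joined to $P$. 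Since $C\cap\ell$ is contained in this $(q+1)$-set and any missing such point could be adjoined to $C$ (it is joined to $P$ by construction and to the rest of $C\cap\ell$ along $\ell$), maximality yields $|C\cap\ell|=q+1$, so $C$ has size $q+2$ and is a clique of the second type.

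The only part needing real attention is the case distinction in the second paragraph: one must verify, for each of the three choices of a pair among $AB,AD,BD$, that picking a clique vertex on each line of the pair away from their common points of the triangle really produces four vertices no three of which are collinear, hence a genuine dual O'Nan configuration. Everything else is elementary incidence counting, using only that a point of $\cH(2,q^2)$ lies on a unique tangent line and a point off it on exactly $q+1$. Finally, for the reverse inclusion: a clique $\ell\setminus\cH(2,q^2)$ with $\ell$ tangent is maximal because a point off $\ell$ lies on only $q+1<q^2$ tangent lines; and a clique of the second type is maximal because a vertex on $\ell$ adjacent to $P$ already lies in the clique, while a vertex outside $\ell$ that could be adjoined would, together with $P$ and two appropriately chosen members of the clique on $\ell$, form a dual O'Nan configuration.
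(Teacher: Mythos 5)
Your proof is correct and takes essentially the same route as the paper: the paper deduces the corollary directly from the non-existence of dual O'Nan configurations in $\cH(2,q^2)$ (with a pointer to Bruen--Fisher), and that is precisely the fact driving your case analysis. You have merely written out the details (the triangle argument forcing all but one vertex onto a single tangent line, plus the elementary count of tangent lines through a point) that the paper leaves implicit.
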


In what follows we recall few known properties of some classes unitals of $\PG(2, q^2)$. Besides the classical one, there are other known unitals in $\PG(2, q^2)$ and all the known unitals of $\PG(2, q^2)$ arise from a construction due to Buekenhout, see \cite{BE, B, M}. A {\em Buekenhout unital} of $\PG(2, q^2)$ is either an orthogonal Buekenhout--Metz unital or a Buekenhout--Tits unital. An {\em orthogonal Buekenhout--Metz unital} of $\PG(2, q^2)$, $q > 2$, is projectively equivalent to one of the following form:
$$
\cU_{\alpha, \beta} = \{(x, \alpha x^2 + \beta x^{q+1} + z, 1) \;\; | \;\; z \in \GF(q), x \in \GF(q^2)\} \cup \{(0,1,0)\} ,
$$
where $\alpha, \beta$ are elements in $\GF(q^2)$ such that $(\beta - \beta^q)^2 + 4 \alpha^{q+1}$ is a non--square in $\GF(q)$ if $q$ is odd, or $\beta \notin \GF(q)$ and $\alpha^{q+1}/(\beta+\beta^q)^2$ has absolute trace $0$ if $q$ is even and $q > 2$. The unital $\cU_{\alpha, \beta}$ is classical if and only if $\alpha = 0$. If $q$ is odd, $\beta \in \GF(q)$ and $\alpha \ne 0$, then $\cU_{\alpha, \beta}$ can be obtained glueing together $q$ conics of $\PG(2,q^2)$ having in common the point $(0, 1, 0)$. Let $m > 1$ be an odd integer and let $q = 2^m$. A {\em Buekenhout--Tits unital} of $\PG(2, q^2)$ is projectively equivalent to
$$
\cU_{T} = \{(x_0 + x_1 \beta, (x_0^{\delta + 2} + x_0 x_1 + x_1^{\delta}) \beta + z, 1) \;\; | \;\; x_0, x_1, z \in \GF(q)\} \cup \{(0,1,0)\} ,
$$
where $\beta$ is an element of $\GF(q^2) \setminus \GF(q)$ and $\delta = 2^{\frac{m+1}{2}}$. See \cite{FL} for more details.

As the Desarguesian plane is self--dual, that is, it is isomorphic to its dual plane, the dual of a unital in $\PG(2, q^2)$ is a unital in $\PG(2, q^2)$. The non--classical orthogonal Buekenhout--Metz unital $\cU_{\alpha, \beta}$ as well as the Buekenhout--Tits unital $\cU_{T}$  of $\PG(2, q^2)$ are self--dual \cite[Theorem 4.17, Theorem 4.28, Corollary 4.35]{BE}.  Moreover the O'Nan configuration can be embedded in each non--classical Buekenhout unital of $\PG(2, q^2)$ \cite{FL}. Therefore it follows that there exists a dual O'Nan configuration embedded in both unitals $\cU_{\alpha, \beta}$ and $\cU_{T}$\footnote{We thank A. Aguglia for pointing this out.}. 

\begin{prop}
There exists a dual O'Nan configuration embedded in a non--classical Buekenhout unital of $\PG(2, q^2)$.
\end{prop}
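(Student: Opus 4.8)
The plan is to combine three facts already collected above: that every non-classical Buekenhout unital of $\PG(2,q^2)$ contains an O'Nan configuration \cite{FL}; that the non-classical orthogonal Buekenhout--Metz unitals $\cU_{\alpha,\beta}$ (with $q>2$) and the Buekenhout--Tits unital $\cU_T$ are self-dual \cite{BE}; and the purely combinatorial principle that projective duality in $\PG(2,q^2)$ interchanges O'Nan configurations of a unital $\cU$ with dual O'Nan configurations of its dual unital $\cU^*$. So the first task is to make that principle precise, and the second is to chain the three facts together.

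For the duality principle I would use the description of $\cU^*$ recalled above. Since a point of $\PG(2,q^2)$ lies on a unique tangent line of $\cU$ if it belongs to $\cU$ and on $q+1$ tangent lines otherwise, reading the points of $\PG(2,q^2)$ as the lines of the dual plane $\PG(2,q^2)^*$ we get: a line of $\PG(2,q^2)^*$ is tangent to $\cU^*$ exactly when the corresponding point of $\PG(2,q^2)$ lies on $\cU$, and secant to $\cU^*$ exactly when that point lies in $\PG(2,q^2)\setminus\cU$. Now take an O'Nan configuration of $\cU$: four (necessarily secant) lines $\ell_1,\dots,\ell_4$ of $\PG(2,q^2)$, no three concurrent, whose six intersection points $P_{ij}=\ell_i\cap\ell_j$ all lie on $\cU$. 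Viewed in $\PG(2,q^2)^*$, the $\ell_i$ become four points lying off $\cU^*$, no three of them collinear (no three of the $\ell_i$ being concurrent), and the line of $\PG(2,q^2)^*$ through $\ell_i$ and $\ell_j$ is the point $P_{ij}$, which lies on $\cU$ and hence is a tangent line of $\cU^*$. Thus $\ell_1,\dots,\ell_4$ form a dual O'Nan configuration of $\cU^*$, and since the construction is symmetric (apply it to $\cU^*$, whose dual is $\cU$ again) we obtain: $\cU$ contains an O'Nan configuration if and only if $\cU^*$ contains a dual O'Nan configuration.

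Chaining then goes as follows. By \cite{FL}, each of $\cU_{\alpha,\beta}$ (for $q>2$) and $\cU_T$ contains an O'Nan configuration, so by the previous step each of $\cU_{\alpha,\beta}^*$ and $\cU_T^*$ contains a dual O'Nan configuration; and since these unitals are self-dual \cite{BE}, a duality of $\PG(2,q^2)$ realizing the self-duality carries $\cU$ onto $\cU^*$, hence tangent lines to tangent lines and points off the unital to points off the unital, and therefore transports the dual O'Nan configuration found in $\cU^*$ into $\cU$. That gives a dual O'Nan configuration inside each of these non-classical Buekenhout unitals. The only delicate point is the bookkeeping in the duality step — keeping straight that ``tangent''/``secant'' and ``no three collinear''/``no three concurrent'' correspond correctly under the correlation $\PG(2,q^2)\to\PG(2,q^2)^*$ — but once the dictionary above is spelled out this is automatic, and everything else is a direct appeal to \cite{FL} and \cite{BE}.
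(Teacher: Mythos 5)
Your argument is correct and is essentially the paper's own: the paper also deduces the proposition by combining the existence of O'Nan configurations in non--classical Buekenhout unitals \cite{FL} with the self--duality of $\cU_{\alpha,\beta}$ and $\cU_T$ \cite{BE}, the duality dictionary you spell out being left implicit there (the paper then adds an explicit coordinate example purely for illustration). Your more detailed bookkeeping of the correspondence secant line $\leftrightarrow$ point off $\cU^*$, point of $\cU$ $\leftrightarrow$ tangent line of $\cU^*$ is accurate and fills in exactly what the paper's ``therefore it follows'' asserts.
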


For the convenience of the reader we illustrate the previous statement with an example. Let $q$ be odd and let us fix $\alpha, \beta$ in $\GF(q^2)$, $\alpha \ne 0$, such that $(\beta - \beta^q)^2 + 4 \alpha^{q+1}$ is a non--square in $\GF(q)$. Set $\cU = \{(-2 \alpha x + (\beta^q - \beta) x^q, 1, \alpha x^2 - \beta^q x^{q+1} - z) \;\; | \;\; x \in \GF(q^2), z \in \GF(q) \} \cup \{(0, 0, 1)\}$. From the proof of \cite[Theorem 4.17]{BE} $\cU$ is projectively equivalent to $\cU_{\alpha, \beta}$. Consider the six lines given by: $[0, 0, 1]$, $[0, -2 x_{\lambda_1} x_{\lambda_2}, x_{\lambda_1} + x_{\lambda_2}]$, $[x_{\lambda_1}, - x_{\lambda_1}, 1]$, $[x_{\lambda_2}, - x_{\lambda_2}, 1]$, $[- x_{\lambda_1}, - x_{\lambda_1}, 1]$, $[- x_{\lambda_2}, - x_{\lambda_2}, 1]$, where $x_{\lambda_i} = - \frac{\alpha^q + \lambda_i - \beta}{\alpha^{q+1} - (\lambda_i - \beta)^{q+1}}$, $i = 1,2$, and $\lambda_1, \lambda_2 \in \GF(q)$ are such that $(\lambda_1 + \alpha^q - \beta)^{q+1}(\alpha^{q+1} - \beta^{q+1} - \lambda_2^2) + (\lambda_2 + \alpha^q - \beta)^{q+1}(\alpha^{q+1} - \beta^{q+1} -\lambda_1^2) = 0$. By \cite[Theorem 4.17]{BE} and \cite[Corollary 3.6]{FL} these lines are tangent to $\cU$ and there are four non--collinear points of $\PG(2, q^2) \setminus \cU$ such that the lines joining two of these four points are exactly the six lines given above.  

An easy consequence of the previous proposition is the following result.   

\begin{cor}\label{cor2}
If $\cU$ is a non--classical Buekenhout unital of $\PG(2, q^2)$, then the graph $\Gamma_{\cU}$ has at least a further class of cliques than those described in Proposition \ref{cliques}. A clique in this class contains four points no three on a line.
\end{cor}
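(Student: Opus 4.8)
The plan is to invoke the previous proposition as a black box and then add one short combinatorial observation. By that proposition there is a dual O'Nan configuration embedded in $\cU$: four points $Q_1, Q_2, Q_3, Q_4$ of $\PG(2, q^2) \setminus \cU$, no three of them collinear, such that each of the six lines $\langle Q_i, Q_j \rangle$, $1 \le i < j \le 4$, is tangent to $\cU$. Translating into $\Gamma_\cU$, this says precisely that $Q_1, \dots, Q_4$ are pairwise adjacent, so $K_0 = \{Q_1, Q_2, Q_3, Q_4\}$ is a clique of $\Gamma_\cU$ of size $4$ in which no three vertices lie on a common line.

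Next I would check that $K_0$ is contained in no maximal clique of the two types listed in Proposition \ref{cliques}. A maximal clique of the first type consists of $q^2$ points on a common tangent line, hence any four of its points are collinear; a maximal clique of the second type consists of $q+1$ points on a tangent line $\ell$ together with one further point off $\ell$, hence any four of its points include at least three collinear ones (three among the $q+1$ points on $\ell$). Since no three points of $K_0$ are collinear, $K_0$ is a subset of neither type. Extending $K_0$ to a maximal clique $K$ of $\Gamma_\cU$ (possible as the graph is finite), $K$ is therefore a maximal clique distinct from all those described in Proposition \ref{cliques}, and $K \supseteq K_0$ contains four points no three of which are collinear. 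This is exactly what the corollary asserts.

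I do not expect a real obstacle here: all the weight is carried by the existence of the dual O'Nan configuration (the previous proposition, which itself rests on \cite{FL} together with the self--duality statements of \cite{BE}), and the only point requiring care is the elementary case analysis above showing that a $4$-point clique in general position cannot be housed inside either of the two known families, both of which are forced to be almost collinear. If desired, one could sharpen the statement by remarking that every maximal clique containing such a configuration has size at most a constant independent of the "linear" cliques of Proposition \ref{cliques}, but that is not needed for the stated result.
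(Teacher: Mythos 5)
Your proposal is correct and follows exactly the route the paper intends: the paper states the corollary as an "easy consequence" of the proposition on dual O'Nan configurations, and your argument simply fills in the routine details (the four points of the configuration form a 4-clique with no three collinear, and any clique of either type from Proposition \ref{cliques} has at least three collinear points among any four, so a maximal clique containing the configuration lies outside both classes). No gaps.
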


Comparing Corollary \ref{cor1} with Corollary \ref{cor2}, we see that $\Gamma_{\cU}$ has more maximal cliques than 
$\Gamma_{\cU}$ and we obtain the desired result for $q>2$.

\begin{theorem}
Let $q>2$. If $\cU$ is a non--classical Buekenhout unital of $\PG(2, q^2)$, then the graph $\Gamma_{\cU}$ is not isomorphic to $\NU(3, q^2)$.
\end{theorem}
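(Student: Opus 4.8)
The plan is to distinguish the two graphs by counting their maximal cliques, which is plainly an isomorphism invariant. By Corollary~\ref{cor1}, every maximal clique of $\NU(3,q^2)$ is of one of the two types listed in Proposition~\ref{cliques}, so $\NU(3,q^2)$ has exactly $N$ maximal cliques, where $N=(q^3+1)+q^2(q^3+1)(q^2-q+1)$. It then suffices to show that $\Gamma_{\cU}$ has strictly more than $N$ maximal cliques.

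First I would note that Proposition~\ref{cliques} is proved for an arbitrary unital, so $\Gamma_{\cU}$ likewise contains the $N$ cliques of the two standard types and they are maximal in $\Gamma_{\cU}$. Next, since $\cU$ is a non--classical Buekenhout unital, Corollary~\ref{cor2} (i.e.\ the embeddability of a dual O'Nan configuration) supplies four points $P_1,P_2,P_3,P_4$ of $\PG(2,q^2)\setminus\cU$, no three collinear, whose six connecting lines are all tangent to $\cU$; these four points form a clique $K$ of $\Gamma_{\cU}$. The crucial observation is that $K$ is contained in no standard maximal clique: a maximal clique of the first type is a set of collinear points, while a maximal clique of the second type consists of $q+1$ collinear points together with one extra point, so any four of its vertices include three collinear ones. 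Hence $K$ extends to a maximal clique $\mathcal{C}$ of $\Gamma_{\cU}$ of neither standard type, and in particular distinct from each of the $N$ standard maximal cliques. Therefore $\Gamma_{\cU}$ has at least $N+1$ maximal cliques.

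Finally, any graph isomorphism carries maximal cliques bijectively to maximal cliques, so isomorphic graphs have the same number of them. Since $\NU(3,q^2)$ has exactly $N$ and $\Gamma_{\cU}$ has at least $N+1$, the two graphs are not isomorphic. The hypothesis $q>2$ enters only through the standing assumptions of the section: it is needed for the orthogonal Buekenhout--Metz and Buekenhout--Tits unitals to exist and be non--classical, hence for Corollary~\ref{cor2} to apply. One could alternatively remark that for $q>2$ both standard clique sizes $q^2$ and $q+2$ exceed $4$, so if $\mathcal{C}$ happened to have size $4$ it could have no counterpart in $\NU(3,q^2)$ at all; but the counting argument above avoids having to control the size of $\mathcal{C}$.

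I expect the only delicate point to be the middle step: verifying that a four-point clique with no three collinear vertices cannot sit inside either standard clique type, and therefore genuinely produces a new maximal clique. The substantive geometry — the absence of a dual O'Nan configuration in $\cH(2,q^2)$ versus its presence in every non--classical Buekenhout unital — is already packaged in Corollaries~\ref{cor1} and~\ref{cor2}, so no further hard computation is required.
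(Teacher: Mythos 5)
Your proposal is correct and follows essentially the same route as the paper, which proves the theorem precisely by comparing Corollary~\ref{cor1} with Corollary~\ref{cor2}: the maximal cliques of $\NU(3,q^2)$ are only the two standard types, while the dual O'Nan configuration in a non--classical Buekenhout unital yields a maximal clique of $\Gamma_{\cU}$ of neither type. Your explicit verification that a four--point clique with no three collinear vertices cannot lie in a standard maximal clique, and your remark on where $q>2$ enters, simply spell out what the paper leaves implicit.
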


\section{Graphs cospectral with $\NU(n+1, q^2)$, $n \ge 4$}

In this section we introduce two classes of graphs cospectral with $\NU(n+1, q^2)$, $n \ge 4$. These graphs are obtained by modifying few adjacencies in $\NU(n+1, q^2)$ following an idea proposed by Wang, Qiu, and Hu. For a graph $\cG$ and for a vertex $u$ of $\cG$, let $N_{\cG}(u)$ be the set of neighbours of $u$ in $\cG$. Let us recall the Wang--Qiu--Hu switching in a simplified version as stated in \cite{IM}.

\begin{theorem}[WQH switching]\label{thm:wqh}
Let $\cG$ be a graph whose vertex set is partitioned as $\ell_1 \cup \ell_2 \cup \cD$. Assume that the induced subgraphs on $\ell_1, \ell_2,$ and $\ell_1 \cup \ell_2$ are regular, and that the induced subgraphs on $\ell_1$ and $\ell_2$ have the same size and degree. Suppose that each $x \in \cD$ either has the same number of neighbours in $\ell_1$ and $\ell_2$, or $N_{\cG}(x) \cap (\ell_1 \cup \ell_2) \in \{ \ell_1, \ell_2 \}$. Construct a new graph $\cG'$ by switching adjacency and non--adjacency between $x \in \cD$ and $\ell_1 \cup \ell_2$ when $N_{\cG}(x) \cap (\ell_1 \cup \ell_2) \in \{ \ell_1, \ell_2 \}$. Then $\cG$ and $\cG'$ are cospectral.
\end{theorem}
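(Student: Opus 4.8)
The plan is to exhibit an orthogonal matrix $Q$ conjugating the adjacency matrix $A$ of $\cG$ into the adjacency matrix $A'$ of $\cG'$, so that the two are similar and hence cospectral. Order the vertices as $\ell_1, \ell_2, \cD$ and write
$$
A = \begin{pmatrix} M & B \\ B^{T} & A_{\cD}\end{pmatrix},
$$
where $M$ is the adjacency matrix of the induced subgraph on $\ell_1 \cup \ell_2$, the columns $b_x$ of $B$ record the adjacencies of each $x \in \cD$ into $\ell_1 \cup \ell_2$, and $A_{\cD}$ is the adjacency matrix on $\cD$. The switching alters neither $M$ nor $A_{\cD}$; it only replaces each column $b_x$ by a switched column $b'_x$, where $b'_x = b_x$ unless $N_{\cG}(x)\cap(\ell_1\cup\ell_2)\in\{\ell_1,\ell_2\}$, in which case $b'_x$ interchanges the indicator vector of $\ell_1$ with that of $\ell_2$. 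Taking $Q=\mathrm{diag}(Q_0, I)$ with $Q_0$ a symmetric orthogonal involution on the space indexed by $\ell_1\cup\ell_2$, the identity $Q^{T}AQ=A'$ reduces to two requirements: $Q_0 M Q_0 = M$ (equivalently $Q_0$ commutes with $M$), and $Q_0 b_x = b'_x$ for every $x\in\cD$.

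For $Q_0$ I would take the reflection $Q_0 = I - \tfrac{1}{m}\,w\,w^{T}$, where $m=|\ell_1|=|\ell_2|$ and $w = \mathbf{1}_{\ell_1} - \mathbf{1}_{\ell_2}$ is the difference of the two indicator vectors (of squared norm $2m$). This $Q_0$ negates $w$ and fixes its orthogonal complement; in particular it fixes $\mathbf{1}_{\ell_1}+\mathbf{1}_{\ell_2}$ and therefore swaps $\mathbf{1}_{\ell_1}$ and $\mathbf{1}_{\ell_2}$, which is exactly the action needed on the columns $b_x$ with $N_{\cG}(x)\cap(\ell_1\cup\ell_2)\in\{\ell_1,\ell_2\}$. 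For a column $b_x$ of the other type one computes $w^{T} b_x = |N_{\cG}(x)\cap\ell_1| - |N_{\cG}(x)\cap\ell_2| = 0$ by hypothesis, so $Q_0 b_x = b_x = b'_x$. Thus the column requirement holds for both kinds of $\cD$-vertices.

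It remains to verify $Q_0 M = M Q_0$, which holds precisely when $w$ is an eigenvector of $M$. This is where the regularity hypotheses enter. Writing $d$ for the common degree of the induced subgraphs on $\ell_1$ and $\ell_2$, the regularity of the induced subgraph on $\ell_1\cup\ell_2$ forces each vertex of $\ell_1$ to have a constant number $c$ of neighbours in $\ell_2$ and vice versa, so the $\ell_1$–$\ell_2$ block of $M$ has all row and column sums equal to $c$. Consequently $M\mathbf{1}_{\ell_1} = d\,\mathbf{1}_{\ell_1} + c\,\mathbf{1}_{\ell_2}$ and $M\mathbf{1}_{\ell_2} = c\,\mathbf{1}_{\ell_1} + d\,\mathbf{1}_{\ell_2}$, whence $Mw = (d-c)w$. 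Therefore $w$ is an eigenvector, $w w^{T}$ commutes with $M$, and so does $Q_0$.

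Assembling these, $Q^{T}AQ = A'$: the $(\ell_1\cup\ell_2)$-block is $Q_0 M Q_0 = M$ (using that $Q_0$ is an involution commuting with $M$), the cross-block is $Q_0 B = B'$ together with its transpose $B^{T}Q_0 = B'^{T}$, and the $\cD$-block is unchanged. Since $Q$ is orthogonal, $A$ and $A'$ are similar and hence cospectral. The main obstacle, and the only place the structural hypotheses are genuinely used, is the commutation $Q_0 M = M Q_0$: it rests entirely on $w$ being an $M$-eigenvector, which is exactly the content of the equal-degree and joint-regularity assumptions. Everything else is the routine bookkeeping of the reflection's action on the two indicator vectors and on the balanced columns of $\cD$.
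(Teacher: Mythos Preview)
Your argument is correct. The Householder reflection $Q_0 = I - \tfrac{1}{m}ww^{T}$ with $w=\mathbf 1_{\ell_1}-\mathbf 1_{\ell_2}$ is exactly the right conjugating matrix, and your verification that $w$ is an eigenvector of $M$ (using the equal-size, equal-degree, and joint-regularity hypotheses) is clean and complete.

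Note, however, that the paper does not supply its own proof of this theorem: it is quoted as a known result, attributed to Wang--Qiu--Hu \cite{WQH} in the simplified form of \cite{IM}, and used as a black box. So there is no proof in the paper to compare against. That said, your proof is precisely the standard one for switching-type cospectrality results (going back to Godsil--McKay), and it is in fact the argument underlying the cited references: conjugation by a block-diagonal orthogonal matrix whose nontrivial block is a reflection swapping the two indicator vectors and fixing their orthogonal complement. Nothing is missing.
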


\subsection{The graphs $\cG'_n$ and $\cG''_n$}

Let $\cH(n, q^2)$ be a non--degenerate Hermitian variety of $\PG(n, q^2)$, $n \ge 4$, with associated polarity $\perp$. Let us denote by $\cG_n$ the graph $\NU(n+1, q^2)$. Recall that the vertices of $\cG_n$ are the points of $\PG(n, q^2) \setminus \cH(n, q^2)$ and that two vertices are adjacent if their span is a line tangent to $\cH(n, q^2)$. The graph $\cG_n$ is strongly regular with parameters
\begin{align*}
& v = \frac{q^n(q^{n+1} - \epsilon)}{q+1}, \quad k = (q^n + \epsilon)(q^{n-1} - \epsilon), \\
& \lambda = q^{2n-3}(q+1) - \epsilon q^{n-1}(q-1) - 2, \quad \mu = q^{n-2}(q+1)(q^{n-1} - \epsilon),
\end{align*}
where $\epsilon = (-1)^{n+1}$.
Fix a point $P \in \cH(n, q^2)$ and let $\bar{\ell}_1, \bar{\ell}_2$ be two lines of $\PG(n, q^2)$ such that $\bar{\ell}_1 \cap \cH(n, q^2) = \bar{\ell}_2 \cap \cH(n, q^2) = P$. Let $\pi$ be the plane spanned by $\bar{\ell}_1$ and $\bar{\ell}_2$ and let $\ell_i = \bar{\ell}_i \setminus \{P\}$, $i = 1, 2$. Thus $\ell_i$ consists of $q^2$ vertices of $\cG_n$. There are two possibilities: either $\pi \cap \cH(n, q^2)$ is a Hermitian pencil of lines or is a line. Let us define the following sets:
\begin{align*}
& \cA = \left( \cap_{u \in \ell_1} N_{\cG_n}(u) \right) \cap \left( \cap_{u \in \ell_2} N_{\cG_n}(u) \right), \\
& \cA_1 = \left( \cap_{u \in \ell_1} N_{\cG_n}(u) \right) \setminus (\cA \cup \ell_2), \\
& \cA_2 = \left( \cap_{u \in \ell_2} N_{\cG_n}(u) \right) \setminus (\cA \cup \ell_1).
\end{align*}

\begin{lemma}\label{sets12}
Let $n = 4$. 
\begin{itemize}
\item If $\pi \cap \cH(4, q^2)$ is a Hermitian pencil of lines, then the set $\cA$ or $\cA_i$, $i= 1,2$, consists of the points lying on $(q+1)^2$ or $(q+1)(q^2-q-2)$ lines tangent to $\cH(4, q^2)$ at $P$, minus $P$ itself. In particular $|\cA| = q^2(q+1)^2$ and $|\cA_1| = |\cA_2| = q^2(q+1)(q^2-q-2)$.
\item If $\pi \cap \cH(4, q^2)$ is a line, then the set $\cA$ or $\cA_i$, $i= 1,2$, consists of the points lying on $2(q^2-1)$ or $q(q^2-q-1)$ lines tangent to $\cH(4, q^2)$ at $P$, minus $P$ itself. In particular $|\cA| = 2q^2(q^2-1)$ and $|\cA_1| = |\cA_2| = q^3(q^2-q-1)$.
\end{itemize}
\end{lemma}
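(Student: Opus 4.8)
The plan is to understand, for each tangent line $\bar\ell$ through $P$ with $\bar\ell\cap\cH(4,q^2)=\{P\}$, the set of common neighbours $\cW(\bar\ell):=\bigcap_{u\in\bar\ell\setminus\{P\}}N_{\cG_4}(u)$, and then to take $\cA=\cW(\bar\ell_1)\cap\cW(\bar\ell_2)$ and $\cA_i=\cW(\bar\ell_i)\setminus(\cA\cup\ell_j)$ for $\{i,j\}=\{1,2\}$. \textbf{Step 1: common neighbours of one tangent line.} I would first show that a vertex $Q$ lies in $\cW(\bar\ell)$ if and only if $Q\notin\bar\ell$ and $\langle Q,\bar\ell\rangle$ meets $\cH(4,q^2)$ in a line (necessarily through $P$). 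Indeed $\langle Q,\bar\ell\rangle\cap\cH(4,q^2)$ is a line, a Hermitian pencil of lines, or a non-degenerate Hermitian curve. If it is a non-degenerate Hermitian curve, only $q+1$ of the lines through $Q$ in that plane are tangent to it, whereas $Q$ would have to be joined by tangent lines to all $q^2>q+1$ points of $\bar\ell\setminus\{P\}$, which is impossible. If it is a Hermitian pencil, its vertex is $P$ (otherwise $\bar\ell$ would meet some line of the pencil, hence $\cH(4,q^2)$, in a point other than $P$), and the only tangent line through $Q$ in that plane passes through $P$, so $Q$ is adjacent to no point of $\bar\ell\setminus\{P\}$. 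If it is a line $m$, every line of $\langle Q,\bar\ell\rangle$ through $Q$ meets $\cH(4,q^2)$ only in its intersection with $m$ and so is tangent; hence $Q$ is adjacent to all of $\bar\ell\setminus\{P\}$.

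\textbf{Step 2: the planes through $\bar\ell$ that meet $\cH$ in a line.} Because $\bar\ell\cap\cH(4,q^2)=\{P\}$, the Hermitian form restricted to the plane $\bar\ell^\perp$ has radical $\bar\ell^\perp\cap\bar\ell=\{P\}$, so $\bar\ell^\perp\cap\cH(4,q^2)$ is a Hermitian pencil of $q+1$ generators of $\cH(4,q^2)$ through $P$. Dualizing---the planes $\sigma$ through $\bar\ell$ are the $(\sigma^\perp)^\perp$ with $\sigma^\perp$ a line of $\bar\ell^\perp$, and $\sigma\cap\cH(4,q^2)$ is a line precisely when $\sigma^\perp$ is a generator---there are exactly $q+1$ planes $\sigma_1,\dots,\sigma_{q+1}$ through $\bar\ell$ meeting $\cH(4,q^2)$ in a line $m_t=\sigma_t\cap\cH(4,q^2)$, and any two of them meet in $\bar\ell$. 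By Step 1, $\cW(\bar\ell)=\bigsqcup_{t=1}^{q+1}\bigl(\sigma_t\setminus(m_t\cup\bar\ell)\bigr)$; equivalently, $\cW(\bar\ell)$ is the union, with $P$ removed, of the $(q+1)(q^2-1)$ lines that are tangent to $\cH(4,q^2)$ at $P$ and lie in some $\sigma_t$ but equal neither $m_t$ nor $\bar\ell$.

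\textbf{Step 3: intersecting, and the two cases.} Apply Step 2 to $\bar\ell_1$ and $\bar\ell_2$; let $\{X_i\}$ and $\{Y_j\}$ be the bases of the two pencils $\bar\ell_1^\perp\cap\cH(4,q^2)$ and $\bar\ell_2^\perp\cap\cH(4,q^2)$, so that the planes from Step 2 are $\sigma_1^{(i)}=(\langle P,X_i\rangle)^\perp$, $\sigma_2^{(j)}=(\langle P,Y_j\rangle)^\perp$ and $\sigma_1^{(i)}\cap\sigma_2^{(j)}=\langle P,X_i,Y_j\rangle^\perp$; then $\cA=\bigcup_{i,j}\bigl(\sigma_1^{(i)}\cap\sigma_2^{(j)}\setminus(m_1^{(i)}\cup\bar\ell_1\cup m_2^{(j)}\cup\bar\ell_2)\bigr)$. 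Whenever $P,X_i,Y_j$ are not collinear, $L_{ij}:=\langle P,X_i,Y_j\rangle^\perp$ is a line through $P$ contained in $P^\perp$ (hence tangent to $\cH(4,q^2)$ at $P$), not contained in $\cH(4,q^2)$, and distinct from $m_1^{(i)}$, $m_2^{(j)}$, $\bar\ell_1$, $\bar\ell_2$; moreover distinct admissible pairs give distinct $L_{ij}$ meeting only in $P$. Now split according to $\pi\cap\cH(4,q^2)$. If $\pi\cap\cH(4,q^2)$ is a Hermitian pencil, then $\pi^\perp\cap\cH(4,q^2)=\{P\}$, which forces $P,X_i,Y_j$ non-collinear for all $i,j$; hence $\cA$ is the union, minus $P$, of the $(q+1)^2$ lines $L_{ij}$, giving $|\cA|=q^2(q+1)^2$; and since $\langle u,\bar\ell_1\rangle=\pi$ for $u\in\ell_2$ is not a line-plane, $\ell_2\cap\cW(\bar\ell_1)=\emptyset$, so $\cA_1=\cW(\bar\ell_1)\setminus\cA$ is the union, minus $P$, of the remaining $(q+1)(q^2-1)-(q+1)^2=(q+1)(q^2-q-2)$ tangent lines at $P$, of size $q^2(q+1)(q^2-q-2)$. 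If $\pi\cap\cH(4,q^2)$ is a line $m$, then $\pi$ is itself one of the $\sigma_1^{(i)}$, say $\sigma_1^{(i_0)}$, and one of the $\sigma_2^{(j)}$, say $\sigma_2^{(j_0)}$, with $\sigma_1^{(i_0)}\cap\sigma_2^{(j_0)}=\pi$; the pairs $(i_0,j)$ with $j\ne j_0$ give $\sigma_1^{(i_0)}\cap\sigma_2^{(j)}=\bar\ell_2$ and the pairs $(i,j_0)$ with $i\ne i_0$ give $\bar\ell_1$, contributing nothing, while for $i\ne i_0,\ j\ne j_0$ one still obtains the $q^2$ lines $L_{ij}$. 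Thus $\cA=\bigl(\pi\setminus(m\cup\bar\ell_1\cup\bar\ell_2)\bigr)\ \sqcup\ \bigsqcup_{i\ne i_0,\,j\ne j_0}\bigl(L_{ij}\setminus\{P\}\bigr)$, the union minus $P$ of $(q^2-2)+q^2=2(q^2-1)$ tangent lines at $P$, so $|\cA|=2q^2(q^2-1)$; and since now $\ell_2\subseteq\cW(\bar\ell_1)$ while $\ell_2\cap\cA=\emptyset$, $\cA_1=(\cW(\bar\ell_1)\setminus\cA)\setminus\ell_2$ is the union, minus $P$, of $q(q^2-q-1)$ tangent lines at $P$, of size $q^3(q^2-q-1)$. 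The statement for $\cA_2$ is symmetric.

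The main obstacle is the clean characterization in Step 1---delimiting the three geometric regimes for $\langle Q,\bar\ell\rangle\cap\cH(4,q^2)$ and ruling out the two degenerate ones---together with the bookkeeping in Step 3 that distinguishes the two cases: determining exactly which of the $(q+1)^2$ pairs $(i,j)$ produce a genuinely new tangent line $L_{ij}$ and which collapse onto $\bar\ell_1$, $\bar\ell_2$, or $\pi$. Everything else consists of routine dimension counts in $\PG(4,q^2)$ carried out through the polarity $\perp$; an alternative would be to apply Lemma~\ref{HermSur} inside a secant hyperplane through $\pi$, available precisely when $\pi\cap\cH(4,q^2)$ is a Hermitian pencil.
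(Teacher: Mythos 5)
Your proof is correct and follows essentially the same route as the paper: characterize the common neighbours of a punctured tangent line $\bar\ell_i$ as the points of the $q+1$ planes through $\bar\ell_i$ meeting $\cH(4,q^2)$ in a line, then identify $\cA$ with the pairwise intersections of the two families of planes, which are tangent lines at $P$. Your Step 2 duality count and the explicit bookkeeping of degenerate pairs $(i_0,j)$, $(i,j_0)$ in the line case just spell out details the paper compresses into ``there are $q+1$ planes'' and ``one can argue as before,'' and the few unproved assertions about the lines $L_{ij}$ (not in $\cH$, pairwise distinct, distinct from $m_1^{(i)},m_2^{(j)},\bar\ell_1,\bar\ell_2$) are routine consequences of $\pi^\perp\cap\cH(4,q^2)=\{P\}$, respectively $\pi^\perp=m$, at the same level of rigor as the paper's own argument.
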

\begin{proof}
Assume that $\pi \cap \cH(4, q^2)$ is a Hermitian pencil of lines. Let $u'$ be a vertex of $\cG_{4}$ such that it is adjacent to any vertex of $\ell_1$. Then $u' \notin \ell$, otherwise $u' \in N_{\cG_{4}}(u')$, a contradiction. If $u \in \ell_1$, the line determined by $u$ and $u'$ contains a unique point of $\cH(4, q^2)$ and hence the plane $\sigma$ spanned by $\bar{\ell}_1$ and $u'$ has exactly $q^2+1$ points of $\cH(4, q^2)$. Therefore $\sigma \cap \cH(4, q^2)$ is a line and $\sigma$ contains $q^4-q^2$ vertices of $\cG_{4}$ adjacent to every vertex of $\ell_1$. Note that there are $q+1$ planes through $\ell_1$ meeting $\cH(4, q^2)$ in a line. Hence $|\cA_1 \cup \cA| = (q+1)(q^4-q^2)$. A similar argument holds for $\ell_2$. Moreover if $\sigma_i$ is a plane through $\ell_i$ having $q^2+1$ points in common with $\cH(4, q^2)$, $i = 1,2$, then $\sigma_1 \cap \sigma_2$ is a line tangent to $\cH(4, q^2)$ at $P$. Therefore $\cA = q^2(q+1)^2$ and the first part of the statement follows. If $\pi \cap \cH(n, q^2)$ is a line, one can argues as before. 
\end{proof}

It can be easily deduced that $\cA, \cA_1, \cA_2 \subseteq P^\perp$. If $\pi \cap \cH(n, q^2)$ is a Hermitian pencil of lines and $(n, q) = (4, 2)$, then $|\cA_1| = |\cA_2| = 0$.

\begin{cons}\label{cons}
Define a graph $\cG'_n$ or $\cG''_n$, according as $\pi \cap \cH(n, q^2)$ is a Hermitian pencil of lines or a line. The vertices of $\cG'_n$ and $\cG''_n$ are the vertices of $\cG_n$ and the edges are as follows:
$$
\begin{cases}
N_{\cG_n}(u) & \mbox{ if } u \notin \cA_1 \cup \cA_2 \cup \ell_1 \cup \ell_2, \\
\left( N_{\cG_n}(u) \setminus \cA_1 \right) \cup \cA_2 & \mbox{ if } u \in \ell_1, \\
\left( N_{\cG_n}(u) \setminus \cA_2 \right) \cup \cA_1 & \mbox{ if } u \in \ell_2, \\
\left( N_{\cG_n}(u) \setminus \ell_1 \right) \cup \ell_2 & \mbox{ if } u \in \cA_1, \\
\left( N_{\cG_n}(u) \setminus \ell_2 \right) \cup \ell_1 & \mbox{ if } u \in \cA_2.
\end{cases}
$$
\end{cons}

Note that the graphs $\cG'_n$ and $\cG_n''$ are obtained from $\cG_n$ by applying the WQH switching as described in Theorem~\ref{thm:wqh}. Indeed, the vertex set of $\cG_n$ consists of $\ell_1 \cup \ell_2 \cup \cD$, where $\cA, \cA_1, \cA_2 \subset \cD$. Furthermore, if $x \in \cD \setminus (\cA_1 \cup \cA_2)$, then $N_{\cG_n}(x) \cap \ell_1 = N_{\cG_n}(x) \cap \ell_2 \in \{0, 1, q^2\}$, whereas if $x \in \cA_i$, then $N_{\cG_n}(x) \cap \ell_i = q^2$ and $N_{\cG_n}(x) \cap \ell_j = 0$, where $\{i, j\} = \{1, 2\}$.  

\begin{theorem}
The graphs $\cG'_n$ and $\cG''_n$ are strongly regular and have the same parameters as $\cG_n$.
\end{theorem}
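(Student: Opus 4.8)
The plan is to derive everything from the Wang--Qiu--Hu switching of Theorem~\ref{thm:wqh} together with the spectral characterisation of strongly regular graphs. The first step is to invoke the observation recorded after Construction~\ref{cons}: the partition $\ell_1\cup\ell_2\cup\cD$ of the vertex set of $\cG_n$ meets all the hypotheses of Theorem~\ref{thm:wqh}. Indeed the induced subgraph on each $\ell_i$ is the complete graph on the $q^2$ points of $\bar\ell_i\setminus\{P\}$ (any two of them span the tangent line $\bar\ell_i$), so $\ell_1$ and $\ell_2$ have the same size and degree; the induced subgraph on $\ell_1\cup\ell_2$ is regular (it is $K_{2q^2}$ when $\pi\cap\cH(n,q^2)$ is a line, and the disjoint union $K_{q^2}\sqcup K_{q^2}$ when $\pi\cap\cH(n,q^2)$ is a Hermitian pencil, whose vertex is then forced to be $P$); and each $x\in\cD$ has equally many neighbours in $\ell_1$ and in $\ell_2$ unless $x\in\cA_1\cup\cA_2$, in which case $N_{\cG_n}(x)\cap(\ell_1\cup\ell_2)$ equals $\ell_1$ or $\ell_2$. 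Since $\cG'_n$ and $\cG''_n$ are, by definition, the graphs obtained from $\cG_n$ by performing exactly this switching, Theorem~\ref{thm:wqh} yields that $\cG'_n$ and $\cG''_n$ are cospectral with $\cG_n$.

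It remains to promote cospectrality to the full conclusion. Write $\cG$ for either $\cG'_n$ or $\cG''_n$, and let $A$ be its adjacency matrix. Since $\cG_n$ is strongly regular and connected, its spectrum consists of three distinct eigenvalues $k>r>s$ with the valency $k$ of multiplicity $1$, and $\cG$ has the same spectrum. The graph $\cG$ is regular: its average degree equals $\frac1v\,\mathrm{tr}(A^2)=\frac1v\sum_i\lambda_i^2$, which is a spectral invariant and hence equals $k$, and $k$ is also the largest eigenvalue of $\cG$; a graph whose average degree equals its largest eigenvalue is regular (there is equality in the Rayleigh quotient at the all--ones vector, which must then be an eigenvector for the top eigenvalue). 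Alternatively, regularity is immediate from Construction~\ref{cons}: a vertex of $\ell_1$ exchanges its $|\cA_1|$ neighbours in $\cA_1$ for the $|\cA_2|$ neighbours in $\cA_2$, a vertex of $\cA_1$ exchanges $\ell_1$ for $\ell_2$, and symmetrically, so no degree changes once one knows $|\cA_1|=|\cA_2|$ and $|\ell_1|=|\ell_2|=q^2$. Being $k$--regular, $\cG$ has as many connected components as the multiplicity of $k$ in its spectrum, namely one, so $\cG$ is connected. A connected regular graph with exactly three distinct eigenvalues is strongly regular; hence $\cG$ is strongly regular. Finally, the parameters $(v,k,\lambda,\mu)$ of a strongly regular graph are determined by $v$, by the valency $k$, and by the two non--principal eigenvalues through $\mu=k+rs$ and $\lambda=\mu+r+s$; since $\cG$ and $\cG_n$ agree in all of these, they have the same parameters.

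The argument above is essentially formal; the real work lies in the geometric input, and this is where I expect the only genuine difficulty. One must be sure that the verification of the hypotheses of Theorem~\ref{thm:wqh} --- in particular that every $x\in\cD\setminus(\cA_1\cup\cA_2)$ has the same number of neighbours in $\ell_1$ as in $\ell_2$, and that $\cA_1,\cA_2$ are exactly the vertices of $\cD$ adjacent to all of one $\ell_i$ and to none of the other --- is valid for every $n\ge4$, whereas Lemma~\ref{sets12} supplies this only for $n=4$. The extension to $n\ge5$ rests on the structure of the sections of $\cH(n,q^2)$ by the planes through $\ell_i$ (each such plane meets $\cH(n,q^2)$ in a line or in a Hermitian pencil), in the spirit of Lemma~\ref{HermSur}; with that structural description in hand, the counting of neighbours in $\ell_1$ and $\ell_2$ behaves uniformly in $n$, and the spectral bookkeeping of the second paragraph then completes the proof.
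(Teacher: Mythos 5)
Your overall strategy is the paper's own: verify the hypotheses of Theorem~\ref{thm:wqh} for the partition $\ell_1\cup\ell_2\cup\cD$ and then pass from cospectrality to strong regularity with the same parameters. The spectral bookkeeping you spell out (regularity from the equality of the largest eigenvalue with the average degree, connectedness from the multiplicity of $k$, then three eigenvalues) is standard and correct, and your description of the induced subgraph on $\ell_1\cup\ell_2$ in the two cases ($K_{q^2}\sqcup K_{q^2}$ for the pencil, $K_{2q^2}$ when $\pi\cap\cH(n,q^2)$ is a line) is right. The genuine gap is that the decisive hypothesis --- that every $x\in\cD\setminus(\cA_1\cup\cA_2)$ has equally many neighbours in $\ell_1$ and in $\ell_2$, and that a vertex adjacent to all of one $\ell_i$ but not to all of the other is adjacent to \emph{no} vertex of the other --- is never proved: you ``invoke the observation recorded after Construction~\ref{cons}'', but in the paper that observation is precisely what the proof of this theorem establishes, so citing it here is circular. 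That verification is the entire substance of the proof: for $u\notin\pi\cup\ell_1\cup\ell_2$ one sets $\sigma_i=\langle u,\ell_i\rangle$ and distinguishes whether $\sigma_i\cap\cH(n,q^2)$ is a line, a Hermitian pencil of lines, or a non--degenerate Hermitian curve, obtaining $q^2$, $0$ or $q+1$ neighbours of $u$ on $\ell_i$ respectively; the balancing step is that $\sigma_1\cap\cH(n,q^2)$ is non--degenerate if and only if the line $\sigma_1\cap\sigma_2=\langle u,P\rangle$ is secant, if and only if $\sigma_2\cap\cH(n,q^2)$ is non--degenerate, so the count $q+1$ occurs on both sides simultaneously, and the remaining combinations force $(q^2,q^2)$, $(q^2,0)$, $(0,q^2)$ or $(0,0)$.

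Your closing sketch of how to supply this is off on two counts. First, a plane through the tangent line $\bar{\ell}_i$ can certainly meet $\cH(n,q^2)$ in a non--degenerate Hermitian curve --- this is the generic case and exactly the one where the equality of the two neighbour counts requires an argument --- so your parenthetical claim that such planes meet $\cH(n,q^2)$ only in a line or a Hermitian pencil is false. Second, Lemma~\ref{sets12} is not what is missing: the sizes $|\cA|$, $|\cA_1|$, $|\cA_2|$ play no role in Theorem~\ref{thm:wqh} (they are needed later, for the non--isomorphism argument), and the plane--section analysis just described is uniform in $n\ge 4$, so there is no special difficulty for $n\ge 5$. Note also that your alternative ``degree--preservation'' argument for regularity does rely on $|\cA_1|=|\cA_2|$, which you have not established beyond $n=4$; the spectral route avoids this, but only once the switching hypotheses are actually verified as above.
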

\begin{proof}
It is sufficient to show that the hypotheses of Theorem~\ref{thm:wqh} are satisfied. Consider the graph $\cG'_n$. The induced subgraph on $\ell_i$ is the complete graph on $q^2$ vertices; as there is no edge between a vertex of $\ell_1$ and a vertex of $\ell_2$, the induced subgraph on $\ell_1 \cup \ell_2$ is the union of the two complete graphs. Let $u$ be a vertex of $\cG_n$ with $u \notin \ell_1 \cup \ell_2$. If $u$ is not in $\pi$, let $\sigma_i$ be the span of $u$ and $\ell_i$ for $i=1,2$. If $\sigma_i \cap \cH(n, q^2)$ is a line, then $N_{\cG_n}(u) \cap \ell_i = \ell_i$. If $\sigma_i \cap \cH(n, q^2)$ is a Hermitian pencil of lines, then all lines of $\sigma_i$ that are tangent to $\cH(n, q^2)$ go through $P$, so $|N_{\cG_n}(u) \cap \ell_i| = 0$. If $\sigma_i \cap \cH(n, q^2)$ is a non--degenerate Hermitian curve, then $\ell_i$ is the unique line of $\sigma_i$ that is tangent to $\cH(n, q^2)$ at $P$. In this case, as $u$ lies on $q+1$ lines of $\sigma_i$ that are tangent to $\cH(n, q^2)$, we have that $|N_{\cG_n}(u) \cap \ell_i| = q+1$. The curve $\sigma_1 \cap \cH(n, q^2)$ is non--degenerate if and only if the line $\sigma_1 \cap \sigma_2$ is secant to $\cH(n, q^2)$, i.e., if and only if the curve $\sigma_2 \cap \cH(n, q^2)$ is non--degenerate. Assume that $u \in \pi$. Thus $|N_{\cG_n}(u) \cap \ell_i| = 0$, $i = 1,2$. Hence, we have shown that we can apply Theorem~\ref{thm:wqh} and obtain a strongly regular graph with the same parameters as $\cG_n$. By using the same arguments used for $\cG'$, it is possible to see that the graph $\cG''_n$ is strongly regular and has the same parameters as $\cG_n$. 
\end{proof}

\subsection{The vertices adjacent to three pairwise adjacent vertices of $\cG_4$}

For a point $R$ of $\PG(4, q^2) \setminus \cH(4, q^2)$ let $\cT_{R}$ be the set of points lying on a line tangent to $\cH(4, q^2)$ at $R$. Thus $\cT_{R} = N_{\cG_4}(R) \cup (R^\perp \cap \cH(4, q^2)) \cup \{R\}$. Let $P_1, P_2, P_3$ be three points of $\PG(4, q^2) \setminus \cH(4, q^2)$ such that the line $P_i P_j$ is tangent to $\cH(4, q^2)$, $1 \le i < j \le 3$, or, in other words, $P_1, P_2, P_3$ are three pairwise adjacent vertices of $\cG_4$. Let $\gamma = \langle P_1, P_2, P_3 \rangle$. Note that if $\gamma$ is a plane, then $\gamma \cap \cH(4, q^2)$ is not a degenerate Hermitian curve of rank two. Indeed every tangent line contained in a plane meeting $\cH(4, q^2)$ in a degenerate Hermitian curve of rank two pass through the same point. Hence there are three possibilities:
\begin{itemize}
\item[1)] $\gamma$ is a line tangent to $\cH(4, q^2)$ and hence $\gamma \cap \cH(4, q^2)$ is a point;
\item[2)] $\gamma$ is a plane and $\gamma \cap \cH(4, q^2)$ is a line;
\item[3)] $\gamma$ is a plane and $\gamma \cap \cH(4, q^2)$ is a non--degenerate Hermitian curve.
\end{itemize}

\begin{lemma}\label{tanPlane}
Let $P_1, P_2, P_3$ be three points of $\PG(4, q^2) \setminus \cH(4, q^2)$ such that the line $P_i P_j$ is tangent to $\cH(4, q^2)$, $1 \le i < j \le 3$, $\gamma = \langle P_1, P_2, P_3 \rangle$ is a plane and $\gamma \cap \cH(4, q^2)$ is a line $\ell$.
\begin{itemize}
\item If $R$ is a point of $\PG(4, q^2) \setminus \left(\cH(4, q^2) \cup \gamma \right)$, then $\cT_{R} \cap \gamma$ is a Hermitian pencil of lines meeting $\ell$ in one point.
\item If $\cC$ is a Hermitian pencil of lines of $\gamma$ meeting $\ell$ in one point, then there are $q^3$ points $R \in \PG(4, q^2) \setminus \left(\cH(4, q^2) \cup \gamma \right)$ such that $\cT_{R} \cap \gamma = \cC$.
\end{itemize}
\end{lemma}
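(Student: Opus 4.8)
The plan is to set up coordinates adapted to $(\gamma,\ell)$, dispose of the first bullet via the tangency criterion for lines through a point off $\cH(4,q^2)$, and dispose of the second bullet by double counting together with a transitivity statement for $\PGU(5,q^2)_\ell$.

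\emph{Setup.} Since $\gamma\cap\cH(4,q^2)=\ell$ is a line, $\ell$ is a generator; moreover, for $P\in\gamma\setminus\ell$ and $T\in\ell$ the line $PT$ lies in $\gamma$ and meets $\cH(4,q^2)$ only in $T$, so it is tangent at $T$, whence $P\in T^{\perp}$. Thus $\gamma\subseteq\ell^{\perp}$, and as $\ell^{\perp}$ is a plane, $\gamma=\ell^{\perp}$ (so also $\gamma^{\perp}=\ell$). We may therefore take $\cH(4,q^2):\ h(x,x)=0$ with $h(x,y)=x_1y_2^{q}+x_2y_1^{q}+x_3y_4^{q}+x_4y_3^{q}+x_5y_5^{q}$, $\ell=\langle e_1,e_3\rangle$, $\gamma=\langle e_1,e_3,e_5\rangle$. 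For $R=(r_1,\dots,r_5)\notin\cH(4,q^2)\cup\gamma$ (so $(r_2,r_4)\neq(0,0)$) and $Q=(X,0,Y,0,Z)\in\gamma$, the line $\langle R,Q\rangle$ is tangent to $\cH(4,q^2)$ exactly when $h(R,Q)^{q+1}=h(R,R)h(Q,Q)$ (the usual discriminant computation, cf.\ \cite[Lemma 2.3]{HT}); with $\rho:=h(R,R)\in\GF(q)^{*}$ this reads $N(r_2X^{q}+r_4Y^{q}+r_5Z^{q})=\rho\,N(Z)$, where $N$ is the norm $\GF(q^2)\to\GF(q)$.

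\emph{First bullet.} The equation above defines $\cT_{R}\cap\gamma$ inside $\gamma$. Its points with $Z=0$ form the single point $R^{\perp}\cap\ell=:V$ (a point of $\ell$, since $\ell\not\subseteq R^{\perp}$ as $R\notin\gamma$), while for $Z\neq0$ it becomes $r_2X^{q}+r_4Y^{q}+r_5Z^{q}=\omega Z^{q}$ with $\omega$ ranging over the coset $\{\omega:N(\omega)=\rho\}$ of the norm-one subgroup $\mu_{q+1}$ of $\GF(q^2)^{*}$, of size $q+1$. Hence $\cT_{R}\cap\gamma=\bigcup_{\omega}m_{\omega}$, where $m_{\omega}:\ r_2X^{q}+r_4Y^{q}+(r_5-\omega)Z^{q}=0$ are $q+1$ distinct lines, all through $V$ and none equal to $\ell=\{Z=0\}$. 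Intersecting the $m_{\omega}$ with a line of $\gamma$ missing $V$ exhibits their base: since $\mu_{q+1}$ is a Baer subline of $\PG(1,q^2)$ and scalings, translations and $x\mapsto x^{q}$ carry Baer sublines to Baer sublines, that base is a Baer subline. Thus $\cT_{R}\cap\gamma$ is a Hermitian pencil of lines with vertex $V\in\ell$ meeting $\ell$ only in $V$.

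\emph{Second bullet.} Count in two ways the set of pairs $(R,\cC)$ with $R\in\PG(4,q^2)\setminus(\cH(4,q^2)\cup\gamma)$, $\cC$ a Hermitian pencil of lines of $\gamma$ meeting $\ell$ in one point, and $\cT_{R}\cap\gamma=\cC$. By the first bullet every such $R$ occurs for a unique $\cC$, so the number of pairs equals $|\PG(4,q^2)\setminus(\cH(4,q^2)\cup\gamma)|=q^{5}(q^{2}+1)(q-1)$. On the other hand, a Hermitian pencil of $\gamma$ meets $\ell$ in exactly one point iff its vertex lies on $\ell$ and $\ell$ is none of its $q+1$ lines; for a fixed vertex $V\in\ell$ these correspond to the Baer sublines of the pencil of lines at $V$ not containing $\ell$, of which there are $(q^{3}+q)-(q^{2}+q)=q^{2}(q-1)$, so there are $(q^{2}+1)q^{2}(q-1)$ of them. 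Finally $\PGU(5,q^2)_\ell$ stabilises $\gamma=\ell^{\perp}$ and intertwines $R\mapsto\cT_{R}$ with its action on subsets, hence permutes these pairs; it is transitive on the points of $\ell$ and, for fixed $V$, induces on the pencil of lines at $V$ the full stabiliser $\PGL(2,q^2)_{[\ell]}$ (generated there by the scalar maps and the unipotent maps fixing $[\ell]$), which is transitive on the Baer sublines avoiding $[\ell]$. So $\PGU(5,q^2)_\ell$ is transitive on the admissible $\cC$, all of which therefore have the same number of preimages, namely $q^{5}(q^{2}+1)(q-1)\big/\big((q^{2}+1)q^{2}(q-1)\big)=q^{3}$.

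\emph{Main obstacle.} The forward direction is routine once the tangency criterion is in place; the work lies in the second bullet. The delicate points there are the bookkeeping for Hermitian pencils meeting $\ell$ in exactly one point — equivalently, counting Baer sublines of $\PG(1,q^2)$ through, resp.\ avoiding, a fixed point — and establishing the transitivity of $\PGU(5,q^2)_\ell$. As an alternative to that transitivity, one can re-examine the computation of the first bullet to check that $\cC$ determines the data $\big((r_2:r_4:r_5),\,\rho\big)$ up to the rescaling $R\mapsto\lambda R$, and that a fixed such datum has exactly $q^{3}$ points $R$; this trades the group theory for a somewhat longer calculation.
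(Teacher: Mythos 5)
Your proof is correct, and it splits into one part that genuinely differs from the paper and one that coincides with it. For the first bullet the paper argues synthetically: it projects $\gamma$ from $R$ onto the plane $\gamma'=R^\perp\cap\langle R,\gamma\rangle$, shows $\gamma'$ contains two distinct tangent lines touching $\cH(4,q^2)$ at the same point (namely $t=R^\perp\cap\gamma$ and the projection $\ell'$ of $\ell$), so that $\gamma'\cap\cH(4,q^2)$ must be a Hermitian pencil of lines, and pulls this back to $\gamma$. Your coordinate computation with the norm equation $N(r_2X^q+r_4Y^q+r_5Z^q)=\rho N(Z)$ reaches the same conclusion more explicitly: it identifies the vertex as $R^\perp\cap\ell$ and exhibits the $q+1$ lines $m_\omega$ directly, at the cost of invoking the standard facts that $\mu_{q+1}$ (equivalently a norm circle) is a Baer subline and that M\"obius and semilinear maps preserve Baer sublines; as you note, this version could even be pushed to give the fibre count of the second bullet by brute force. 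For the second bullet your argument is essentially the paper's: the same two counts ($q^5(q^2+1)(q-1)$ admissible points $R$ and $(q^2+1)q^2(q-1)$ Hermitian pencils with vertex on $\ell$ not containing $\ell$) and the same use of transitivity of the stabilizer of $\gamma=\ell^\perp$ in $\PGU(5,q^2)$; the paper packages the equidistribution via the tactical-configuration lemma of Hughes--Piper applied to the two orbits, whereas you argue it directly from transitivity on the pencils. Note that the paper simply asserts its single-orbit claims, and your transitivity claim is at the same level of detail; it is correct and can be checked by writing down unitary maps inducing the scalings and translations on the pencil at a fixed vertex $V\in\ell$ (e.g., with your form, $e_3\mapsto\lambda e_3$, $e_4\mapsto\lambda^{-q}e_4$, and $e_5\mapsto e_5+ce_3$, $e_4\mapsto e_4+\beta e_3-c^qe_5$ with $\beta+\beta^q=-c^{q+1}$), together with Witt transitivity on the points of $\ell$.
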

\begin{proof}
Let $R$ be a point of $\PG(4, q^2) \setminus \left(\cH(4, q^2) \cup \gamma \right)$. The plane $\langle R, \ell \rangle$ meets $\cH(4, q^2)$ in a Hermitian pencil of lines and hence the line $t = R^\perp \cap \gamma = \langle R, \gamma^\perp \rangle^\perp = \langle R, \ell \rangle^\perp$ is tangent to $\cH(4, q^2)$. By projecting the plane $\gamma$ from $R$ onto $R^\perp$, we get a plane $\gamma'$, where $\gamma \cap \gamma' = t$. Moreover, since $R^\perp \cap \cH(4, q^2)$ is a non--degenerate Hermitian surface and $\gamma' \subset R^\perp$, it follows that $|\gamma' \cap \cH(4, q^2)| \in \{q^3+q^2+1, q^3+1\}$. Note that if $L$ is a point of $\ell$, then the line $R L$ shares with $\cH(4, q^2)$ either one or $q+1$ points according as $L \in R^\perp$ or $L \notin R^\perp$, respectively, and $\ell \cap R^\perp = \ell \cap t$, since $\ell \subset \gamma$. Hence $|\ell \cap t| = 1$. By projecting the line $\ell$ from $R$ onto $R^\perp$, we obtain a line $\ell'$ that is tangent to $\cH(4, q^2)$, with $\ell' \subset \gamma'$ and $\ell' \cap \cH(4, q^2) = t \cap \cH(4, q^2)$. In particular in $\gamma'$, there are two distinct tangent lines, namely $t$ and $\ell'$, such that $t \cap \ell' \in \ell$ and hence $\gamma' \cap \cH(4, q^2)$ is a Hermitian pencil of lines Therefore, we deduce that there are $q^3+q^2$ points of $\gamma \setminus \cH(4, q^2)$ belonging to $N_{\cG_4}(R)$. This shows the first part of the claim.

In the plane $\gamma$, there are $q^2(q-1)(q^2+1)$ Hermitian pencils of lines meeting $\ell$ in one point, and these are permuted in a single orbit by the group $\PGU(5, q^2)_\gamma$, the stabilizer of $\gamma$ in $\PGU(5, q^2)$. On the other hand, $\PGU(5, q^2)_\gamma$ permutes in a unique orbit the $q^5(q-1)(q^2+1)$ points of $\PG(4, q^2) \setminus \left( \cH(4, q^2) \cup \gamma \right)$. Consider the incidence structure whose point set $\cP$ is formed by the points of $\PG(4, q^2) \setminus \left( \cH(4, q^2) \cup \gamma \right)$ and whose block set $\cL$ consists of the $q^2(q-1)(q^2+1)$ degenerate Hermitian curves described above, where a point $R \in \cP$ is incident with a block $\cC \in \cL$ if $\cT_R \cap \gamma = \cC$. Since both, $\cP$ and $\cL$, are orbits of the same group, the considered incidence structure is a tactical configuration and hence we have that a block of $\cL$ is incident with $q^3$ points of $\cP$, as required.
\end{proof}

\begin{lemma}\label{secPlane}
Let $P_1, P_2, P_3$ be three points of $\PG(4, q^2) \setminus \cH(4, q^2)$ such that the line $P_i P_j$ is tangent to $\cH(4, q^2)$, $1 \le i < j \le 3$, $\gamma = \langle P_1, P_2, P_3 \rangle$ is a plane and $\gamma \cap \cH(4, q^2)$ is a non--degenerate Hermitian curve.
\begin{itemize}
\item If $R$ is a point of $\gamma^\perp \setminus \cH(4, q^2)$, then $\cT_{R} \cap \gamma \subset \cH(4, q^2)$.
\item If $R$ is a point of $\PG(4, q^2) \setminus \left(\cH(4, q^2) \cup \gamma \cup \gamma^\perp \right)$ and $\langle R, \gamma \rangle \cap \gamma^\perp \in \cH(4, q^2)$, then $\cT_{R} \cap \gamma$ is a Hermitian pencil of lines, say $\cC$, meeting $\cH(4, q^2)$ in $q+1$ points. In this case there are $(q+1)(q^2-1)$ points $R \in \PG(4, q^2) \setminus \left(\cH(4, q^2) \cup \gamma \cup \gamma^\perp \right)$ such that $\cT_{R} \cap \gamma = \cC$.
\item If $R$ is a point of $\PG(4, q^2) \setminus \left(\cH(4, q^2) \cup \gamma \cup \gamma^\perp \right)$ and $\langle R, \gamma \rangle \cap \gamma^\perp \notin \cH(4, q^2)$, then $\cT_{R} \cap \gamma$ is a non--degenerate Hermitian curve, say $\cC$, meeting $\cH(4, q^2)$ in $1$ or $q+1$ points. In this case there are $(q+1)(q^2-q)$ points $R \in \PG(4, q^2) \setminus \left(\cH(4, q^2) \cup \gamma \cup \gamma^\perp \right)$ such that $\cT_{R} \cap \gamma = \cC$.
\end{itemize}
\end{lemma}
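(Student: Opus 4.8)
The starting point is the tangency criterion \cite[Lemma 2.3]{HT}: writing $h$ for a Hermitian form inducing $\perp$, for $R\notin\cH(4,q^2)$ and a point $X\ne R$ the line $\langle R,X\rangle$ is tangent to $\cH(4,q^2)$ if and only if $h(R,X)^{q+1}=h(R,R)\,h(X,X)$. Since $\gamma\cap\cH(4,q^2)$ is non--degenerate the line $\gamma^\perp$ is secant, so $|\gamma^\perp\cap\cH(4,q^2)|=q+1$; moreover $\Sigma\mapsto\Sigma^\perp$ is a bijection between the hyperplanes through $\gamma$ and the points of $\gamma^\perp$, whence exactly $q^2-q$ of those hyperplanes are secant to $\cH(4,q^2)$. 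The first bullet is immediate: if $R\in\gamma^\perp\setminus\cH(4,q^2)$ then $h(R,X)=0$ for every $X\in\gamma$, while $h(R,R)\,h(X,X)\ne0$ whenever $X\in\gamma\setminus\cH(4,q^2)$, so no line $\langle R,X\rangle$ is tangent and $\cT_R\cap\gamma\subseteq\cH(4,q^2)$. I also record, for later use, that when $R\notin\gamma^\perp\cup\cH(4,q^2)$ a point $X\in\gamma\cap\cH(4,q^2)$ lies in $\cT_R$ iff $X\in R^\perp$; hence $\cT_R\cap\gamma\cap\cH(4,q^2)=R^\perp\cap\gamma\cap\cH(4,q^2)$ is the intersection of a line of $\gamma$ with the non--degenerate Hermitian curve $\gamma\cap\cH(4,q^2)$, so it has $1$ or $q+1$ points.

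For the remaining bullets set $\Sigma=\langle R,\gamma\rangle$ and $Z=\Sigma\cap\gamma^\perp$. Since the unitary polarity induced on the line $\gamma^\perp$ fixes its $q+1$ absolute points and permutes the rest, and $\Sigma^\perp$ is the polar of $Z$ in $\gamma^\perp$, the hyperplane $\Sigma$ is secant to $\cH(4,q^2)$ exactly when $Z\notin\cH(4,q^2)$ and tangent exactly when $Z\in\cH(4,q^2)$; also every line from $R$ to a point of $\gamma$ lies in $\Sigma$, so it is tangent to $\cH(4,q^2)$ iff it is tangent to $\cH(4,q^2)\cap\Sigma$. Suppose $Z\notin\cH(4,q^2)$ (third bullet). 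Then $\cH(4,q^2)\cap\Sigma$ is a non--degenerate $\cH(3,q^2)$, the polar of $\gamma$ inside $\Sigma$ is $\Sigma\cap\gamma^\perp=Z$ with $Z\notin\cH(3,q^2)$, so $\gamma$ is a secant plane with associated point $Z$, and $R\notin\cH(3,q^2)\cup\gamma\cup\{Z\}$. Lemma~\ref{HermSur}, applied inside $\Sigma$ with $P:=Z$ and $P':=R$, then gives that $\cT_R\cap\gamma$ is a non--degenerate Hermitian curve of $\gamma$, which by the previous paragraph meets $\cH(4,q^2)$ in $1$ or $q+1$ points. For the enumeration fix such a curve $\cC$; every $R$ of this case with $\cT_R\cap\gamma=\cC$ has $\langle R,\gamma\rangle$ one of the $q^2-q$ secant hyperplanes through $\gamma$, and inside each of them the ``Viceversa'' part of Lemma~\ref{HermSur}, applied to $\cC$, produces exactly $q+1$ points $R$ for which all lines from $R$ to $\cC$ are tangent to $\cH(3,q^2)$ --- equivalently, since then $\cT_R\cap\gamma$ is a non--degenerate Hermitian curve of $\gamma$ containing $\cC$, for which $\cT_R\cap\gamma=\cC$. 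These $R$ lie off $\gamma$, so distinct secant hyperplanes contribute disjoint sets and the total is $(q^2-q)(q+1)=(q+1)(q^2-q)$.

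Suppose now $Z\in\cH(4,q^2)$ (second bullet). Then $\Sigma=Z^\perp$ is the tangent hyperplane at $Z$, $\cH(4,q^2)\cap\Sigma$ is the cone with vertex $Z$ over a non--degenerate Hermitian curve, and $Z\notin\gamma$; the plan is to project $\Sigma$ from $Z$ onto $\gamma$. A line through $Z$ inside $Z^\perp$ is a generator of $\cH(4,q^2)$ precisely when it meets $\cH(4,q^2)$ outside $Z$, and $\langle Z,Y\rangle$ is a generator for every $Y\in\gamma\cap\cH(4,q^2)$; a perspectivity argument then shows that, for $X\in\gamma$, the line $\langle R,X\rangle$ meets $\cH(4,q^2)$ in as many points as $\langle R',X\rangle$ meets $\gamma\cap\cH(4,q^2)$, where $R':=\langle R,Z\rangle\cap\gamma$ (and $\langle R,Z\rangle$ itself is tangent to $\cH(4,q^2)$ at $Z$, not being contained in $\cH(4,q^2)$). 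Hence $\cT_R\cap\gamma$ equals the set $\cT^\gamma_{R'}$ of points of $\gamma$ lying on a tangent line to $\gamma\cap\cH(4,q^2)$ through $R'$; and since $R'\notin\gamma\cap\cH(4,q^2)$ (otherwise $\langle R,Z\rangle\subseteq\cH(4,q^2)$, forcing $R\in\cH(4,q^2)$), $\cT^\gamma_{R'}$ is the cone with vertex $R'$ over the Baer subline $(R')^\perp\cap\gamma\cap\cH(4,q^2)$, a Hermitian pencil of lines meeting $\cH(4,q^2)$ in $q+1$ points. To count, given such a pencil $\cC$, its vertex $V$ is determined; as $\cH(2,q^2)$ has no external line and $V\notin\gamma\cap\cH(4,q^2)$, each of the $q+1$ lines of $\cC$ is tangent, so $\cC$ is the cone of all $q+1$ tangent lines to $\gamma\cap\cH(4,q^2)$ through $V$, i.e.\ $\cC=\cT^\gamma_V$. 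Thus $\cT_R\cap\gamma=\cC$ forces $R'=V$, which holds exactly when $R$ lies, away from $V$ and $Z_0$, on one of the $q+1$ lines $\langle V,Z_0\rangle$ with $Z_0\in\gamma^\perp\cap\cH(4,q^2)$; each such $R$ is automatically off $\cH(4,q^2)$ (indeed $h(R,R)$ is a nonzero multiple of $h(V,V)\ne0$) and off $\gamma$ and $\gamma^\perp$. As these lines meet only in $V$, there are $(q+1)(q^2-1)$ such points $R$.

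The main obstacle is the second bullet, where one must make the argument by projection from the vertex $Z$ precise --- that the perspectivity from $Z$ turns tangency to $\cH(4,q^2)$ into tangency to $\gamma\cap\cH(4,q^2)$ --- and then read off both the structure of $\cT_R\cap\gamma$ and the exact number of points $R$ yielding a prescribed pencil $\cC$. The third bullet is a clean reduction to Lemma~\ref{HermSur} once one notes that $\langle R,\gamma\rangle$ is a secant hyperplane, and the first is trivial; the remaining ingredients --- the action of the polarity on the line $\gamma^\perp$, the identification of the polar of $\gamma$ inside $\Sigma$ with $Z$, and the count of secant hyperplanes through $\gamma$ --- are routine.
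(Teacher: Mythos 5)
Your strategy matches the paper's: dispose of the first bullet directly, and for the other two work inside the hyperplane $\Sigma=\langle R,\gamma\rangle$ through $\gamma$, splitting into the tangent case ($\Sigma\cap\gamma^\perp\in\cH(4,q^2)$) and the secant case, and obtain the counts as (number of hyperplanes of that type through $\gamma$) times (number of suitable points per hyperplane). Your first bullet, your second bullet (where you project from the cone vertex $Z$ instead of from $R$, which is fine and in fact makes the completeness of that count transparent via the vertex of the pencil), and the structural claim of the third bullet (a clean reduction of the forward statement to Lemma~\ref{HermSur} inside the secant hyperplane) are all correct.

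The one genuine gap is the exactness of the per-hyperplane count in the third bullet. The ``Viceversa'' part of Lemma~\ref{HermSur} gives exactly $q+1$ suitable points \emph{on the line} $\ell^\perp\cap\Sigma$; it does not say these are all the points of $\Sigma$ with $\cT_R\cap\gamma=\cC$, and since the lemma asserts the exact total $(q+1)(q^2-q)$ you must exclude points of $\Sigma$ off that line. When $|\cC\cap\cH(4,q^2)|=q+1$ this does follow from the containment in the forward part of Lemma~\ref{HermSur}: the $q+1$ common points span $\ell$, so $\ell\subset\langle Z,R\rangle^\perp$ and hence $R\in\ell^\perp\cap\Sigma$. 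But when $|\cC\cap\cH(4,q^2)|=1$, say with common point $T$, that containment only forces $R\in T^\perp\cap\Sigma$, a plane, so your citation does not yet yield ``exactly $q+1$ per secant hyperplane''. The repair is short and uses facts you already recorded: if $\cT_R\cap\gamma=\cC$, put $r=R^\perp\cap\gamma$; then $r\cap\cH(4,q^2)=\cC\cap\cH(4,q^2)$ by your identity $\cT_R\cap\gamma\cap\cH(4,q^2)=R^\perp\cap\gamma\cap\cH(4,q^2)$, and $r\cap\cC\subset\cH(4,q^2)$ because a point $X\in r\setminus\cH(4,q^2)$ satisfies $h(R,X)=0$ while $h(R,R)h(X,X)\ne0$, so $\langle R,X\rangle$ is secant and $X\notin\cT_R$. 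Hence $r\cap\cC=r\cap\cH(4,q^2)=\cC\cap\cH(4,q^2)$, and the uniqueness of $\ell$ stated in Lemma~\ref{HermSur} gives $r=\ell$, i.e.\ $R\in\ell^\perp\cap\Sigma$, after which the ``exactly $q+1$'' applies. (The paper obtains this implicitly: its forward analysis derives $\cC\cap r=\cC\cap\cH(4,q^2)=r\cap\cH(4,q^2)$ for $r=\gamma\cap R^\perp$ before invoking Lemma~\ref{HermSur}.)
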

\begin{proof}
Let $R$ be a point of $\gamma^\perp \setminus \cH(4, q^2)$, then a line $t$ through $R$ that is tangent to $\cH(4, q^2)$ meets $\gamma$ in a point of $\cH(4, q^2)$. Hence $\cT_{R} \cap \gamma \subset \cH(4, q^2)$.

Let $R$ be a point of $\PG(4, q^2) \setminus \left( \cH(4, q^2) \cup \gamma \cup \gamma^\perp \right)$. The solid $\Gamma = \langle R, \gamma \rangle$ meets $\gamma^\perp$ in a point, say $R'$, and let $R'' = \Gamma^\perp = R^\perp \cap \gamma^\perp$. Let us define $\gamma'$ as the plane $R^\perp \cap \Gamma$ and $r$ as the line $\gamma \cap \gamma'$. Note that $\gamma'$ is the plane obtained by projecting $\gamma$ from $R$ onto $R^\perp$.

If $R' \in \cH(4, q^2)$, then $R' = R''$, $R'^\perp = \Gamma$, the line $RR'$ is tangent to $\cH(4, q^2)$ at $R'$ and the plane $\langle \gamma^\perp, R R' \rangle$ meets $\cH(4, q^2)$ in $q^3+1$ points. Hence $\gamma' = R^\perp \cap \Gamma = R^\perp \cap R'^\perp = \langle R R' \rangle^\perp$ is a plane meeting $\cH(4, q^2)$ in $q^3+q^2+1$ points. Moreover, $r = \langle \gamma^\perp, R R' \rangle^\perp$ has $q+1$ points in common with $\cH(4, q^2)$, since $|\langle \gamma^\perp, R R' \rangle \cap \cH(4, q^2)| = q^3+1$. In this case by projecting the $q^3+q^2+1$ points of $\gamma' \cap \cH(4, q^2)$ from $R$ onto $\gamma$, we get a Hermitian pencil of lines of $\gamma$, say $\cC$, such that $\cC \cap r = \cC \cap \cH(4, q^2) = r \cap \cH(4, q^2)$. Further the vertex of $\cC$ is the point $V = RR' \cap \gamma$. Varying the point $R'$ in $\gamma^\perp \cap \cH(4, q^2)$ and by replacing $R$ with one of the $q^2-1$ points of $V R' \setminus \left\{ V, R' \right\}$, we obtain the same curve $\cC$. 

If $R' \notin \cH(4, q^2)$, then $R' \ne R''$, $R''^\perp = \Gamma$ and the line $RR''$ is secant to $\cH(4, q^2)$, since $R \in R''^\perp$, with $R \notin \cH(4, q^2)$. Hence $\gamma' = R^\perp \cap \Gamma = R^\perp \cap R''^\perp = \langle R R'' \rangle^\perp$ is a plane meeting $\cH(4, q^2)$ in $q^3+1$ points. Note that $r = \gamma \cap \gamma' = \langle R', R'' \rangle^\perp \cap \langle R, R'' \rangle^\perp = \langle R, R', R'' \rangle^\perp$. Two possibilities arise: either $R R'$ is a line tangent to $\cH(4, q^2)$ at the point $RR' \cap \gamma$ or $RR'$ is secant to $\cH(4,q^2)$ and $RR' \cap \gamma \notin \cH(4, q^2)$. In the former case $|r \cap \cH(4, q^2)| = 1$, since $|\langle R, R', R'' \rangle \cap \cH(4, q^2)| = q^3+q^2+1$, whereas in the latter case $|r \cap \cH(4, q^2)| = q+1$, since $|\langle R, R', R'' \rangle \cap \cH(4, q^2)| = q^3+1$. By projecting the $q^3+1$ points of $\gamma' \cap \cH(4, q^2)$ from $R$ onto $\gamma$, we get a non--degenerate Hermitian curve of $\gamma$, say $\cC$, such that $\cC \cap r = \cC \cap \cH(4, q^2) = r \cap \cH(4, q^2)$. By Lemma \ref{HermSur} there are exactly $q+1$ points of $R R' \setminus \left( \cH(4, q^2) \cup \gamma \cup \gamma' \right)$ which give rise to the same curve $\cC$. Since $|\gamma^\perp \setminus \cH(4, q^2)| = q^2-q$, it follows that there are $(q+1)(q^2-q)$ points $R \in \PG(4, q^2) \setminus \left(\cH(4, q^2) \cup \gamma \cup \gamma^\perp \right)$ such that $\cT_{R} \cap \gamma = \cC$.
\end{proof}

\begin{theorem}\label{char}
Let $P_1, P_2, P_3$ be three pairwise adjacent vertices of $\cG_4$ and let $\gamma = \langle P_1, P_2, P_3 \rangle$.
\begin{enumerate}
\item If $\gamma$ is a line tangent to $\cH(4, q^2)$, let $s$ be the unique Baer subline of $\gamma$ containing $P_1, P_2, P_3$ and let $T = \gamma \cap \cH(4, q^2)$. Then
$$
|N_{\cG_4}(P_1) \cap N_{\cG_4}(P_2) \cap N_{\cG_4}(P_3)| =
\begin{cases}
q^5+q^4-q^3-3 & \mbox{ if } T \in s, \\
2q^5+q^4-q^3-3 & \mbox{ if } T \notin s.
\end{cases}
$$
\item If $\gamma$ is a plane and $\gamma \cap \cH(4, q^2)$ is a line, then $|N_{\cG_4}(P_1) \cap N_{\cG_4}(P_2) \cap N_{\cG_4}(P_3)| = q^5+3q^4-3$.
\item If $\gamma$ is a plane and $\gamma \cap \cH(4, q^2)$ is a non--degenerate Hermitian curve, then $|N_{\cG_4}(P_1) \cap N_{\cG_4}(P_2) \cap N_{\cG_4}(P_3)| = q^5+2q^4+3q^3-2q^2-q-3$.
\end{enumerate}
\end{theorem}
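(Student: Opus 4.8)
In every one of the three cases the quantity to be computed is the number of common neighbours of $P_1, P_2, P_3$ in $\cG_4$, that is, the number of points $R$ of $\PG(4, q^2) \setminus \cH(4, q^2)$ with $R \notin \{P_1, P_2, P_3\}$ for which the three lines $\langle R, P_1 \rangle, \langle R, P_2 \rangle, \langle R, P_3 \rangle$ are tangent to $\cH(4, q^2)$; equivalently, the number of $R \notin \cH(4, q^2)$ with $P_1, P_2, P_3 \in \cT_R$. The plan is to stratify these points $R$ by their position relative to $\gamma$ --- and, in case~3, also relative to $\gamma^\perp$ --- so that the strata are pairwise disjoint and exhaust $\PG(4, q^2)$, and to evaluate each stratum using a lemma proved above. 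The elementary facts driving this are that if $R \notin \gamma$ then $\langle R, \gamma \rangle$ is a single well-determined plane (resp. solid), that tangency of $\langle R, P_i \rangle$ is decided inside $\langle R, \gamma \rangle$ because $P_i \in \gamma$, and that when $\gamma$ is a plane a point $R$ is a common neighbour if and only if $P_1, P_2, P_3 \in \cT_R \cap \gamma$.

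\emph{Case~1.} Suppose $\gamma$ is a tangent line, $\gamma \cap \cH(4, q^2) = \{T\}$. The points $R \in \gamma$ contribute $q^2 - 3$ (here $\langle R, P_i \rangle = \gamma$ always, and one removes only $T$ and $P_1, P_2, P_3$). For $R \notin \gamma$ I would run over the $q^4 + q^2 + 1$ planes $\Pi$ containing $\gamma$, each such $R$ lying in exactly one of them. The polar line of the plane $\gamma^\perp$ is $\gamma$, which is tangent, so $\gamma^\perp \cap \cH(4, q^2)$ is a Hermitian pencil of lines, and the order-reversing bijection $\Pi \mapsto \Pi^\perp$ between the planes through $\gamma$ and the lines of $\gamma^\perp$ splits these planes into $q+1$ with $\Pi \cap \cH(4, q^2)$ a line, $q^2 - q$ with $\Pi \cap \cH(4, q^2)$ a Hermitian pencil, and $q^4$ with $\Pi \cap \cH(4, q^2)$ a non-degenerate Hermitian curve. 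If $\Pi \cap \cH(4, q^2)$ is a line $m$ (which necessarily passes through $T$), then every point of $\Pi \setminus (\gamma \cup m)$ is a common neighbour, contributing $q^4 - q^2$; if $\Pi \cap \cH(4, q^2)$ is a Hermitian pencil, all of its tangent lines pass through its vertex, which must be $T \in \gamma$, so no $R \notin \gamma$ survives; if $\Pi \cap \cH(4, q^2)$ is a non-degenerate Hermitian curve, Lemma~\ref{HermCurve1} applies verbatim, with the same Baer subline $s$, and yields $0$ points when $T \in s$ and $q$ points when $T \notin s$. Summing, one gets $(q^2 - 3) + (q+1)(q^4 - q^2)$ if $T \in s$, and that plus $q^5$ if $T \notin s$, which simplify to the two stated values.

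\emph{Cases~2 and~3.} Now suppose $\gamma$ is a plane. For $R \notin \gamma$ (and, in case~3, $R \notin \gamma^\perp$), Lemma~\ref{tanPlane} (if $\gamma \cap \cH(4, q^2)$ is a line) or Lemma~\ref{secPlane} (if $\gamma \cap \cH(4, q^2)$ is a non-degenerate Hermitian curve) tells us what $\cT_R \cap \gamma$ can be and, for each possible value $\cC$, how many points $R$ produce it, so $R$ is a common neighbour precisely when $\cC$ passes through $P_1, P_2, P_3$. I would feed into this Lemma~\ref{HermCurve2} (there are $q^2 + 2q$ Hermitian pencils of $\gamma$ through $P_1, P_2, P_3$ meeting the line $\gamma \cap \cH(4, q^2)$ in one point, each realised by $q^3$ points $R$), Lemma~\ref{HermCurve3} (there are $3q$ Hermitian pencils of $\gamma$ through $P_1, P_2, P_3$ meeting the curve $\gamma \cap \cH(4, q^2)$ in $q+1$ points, each realised by $(q+1)(q^2-1)$ points $R$), and Proposition~\ref{HermCurve5} (there are $q^2 - q + 1$ non-degenerate Hermitian curves of $\gamma$ through $P_1, P_2, P_3$ meeting $\gamma \cap \cH(4, q^2)$ in one or $q+1$ points, each realised by $(q+1)(q^2-q)$ points $R$). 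The remaining strata are $R \in \gamma$ and, in case~3, $R \in \gamma^\perp$. If $R \in \gamma^\perp \setminus \cH(4, q^2)$ then $\cT_R \cap \gamma \subset \cH(4, q^2)$ by Lemma~\ref{secPlane}, so $R$ is not even adjacent to $P_1$, and this stratum contributes $0$. If $R \in \gamma$ then every $\langle R, P_i \rangle$ lies in $\gamma$: when $\gamma \cap \cH(4, q^2)$ is a line this gives $q^4 - 3$ common neighbours, and when $\gamma \cap \cH(4, q^2)$ is a non-degenerate Hermitian curve, a common neighbour $R$ cannot lie off all three lines $\langle P_i, P_j \rangle$ (else $\{R, P_1, P_2, P_3\}$ would be a dual O'Nan configuration in $\gamma \cap \cH(4, q^2)$), while on each $\langle P_i, P_j \rangle$ exactly the $q - 1$ points met by the tangents to $\gamma \cap \cH(4, q^2)$ from the third point other than $\langle P_i, P_k \rangle$ and $\langle P_j, P_k \rangle$ qualify, giving $3(q-1)$ in total. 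Adding the contributions gives $q^5 + 3q^4 - 3$ in case~2 and $q^5 + 2q^4 + 3q^3 - 2q^2 - q - 3$ in case~3.

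The routine parts are the final arithmetic and the check that every Hermitian pencil or curve counted by Lemmas~\ref{HermCurve2}, \ref{HermCurve3} and Proposition~\ref{HermCurve5} really does occur as $\cT_R \cap \gamma$ for the full number of $R$ predicted by Lemmas~\ref{tanPlane} and~\ref{secPlane}; this reduces to comparing the two counts, equivalently to the transitivity of $\PGU(5, q^2)_\gamma$ on the relevant families, exactly as in the proofs of those lemmas. I expect the part needing the most care to be case~1, where no classification of $\cT_R \cap \gamma$ is available: one must work directly with the plane sections $\langle R, \gamma \rangle \cap \cH(4, q^2)$, count their three types via $\gamma^\perp$, and separately verify that the line-section planes contribute all of $\Pi \setminus (\gamma \cup m)$ while the Hermitian-pencil planes contribute nothing because their vertex is pinned to $T$.
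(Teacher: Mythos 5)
Your proposal is correct and follows essentially the same route as the paper's proof: the same case split and stratification of common neighbours by their position relative to $\gamma$ (and, in case~3, $\gamma^\perp$), with the same appeals to Lemmas \ref{HermCurve1}, \ref{HermCurve2}, \ref{HermCurve3}, \ref{tanPlane}, \ref{secPlane} and Proposition \ref{HermCurve5}, and the same in-plane contributions $q^2-3$, $q^4-3$ and $3(q-1)$ leading to identical arithmetic. The only difference is that you spell out slightly more explicitly (via the polarity correspondence with the lines of $\gamma^\perp$) the counts of the three types of planes through the tangent line in case~1, which the paper simply asserts.
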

\begin{proof}
Assume that $\gamma$ is a line tangent to $\cH(4, q^2)$ and let $\sigma$ be a plane through $\gamma$. If $\sigma \cap \cH(4, q^2)$ is a line, then $\sigma$ contains $q^4-q^2$ points $R$ not in $\gamma$ such that the lines $R P_i$, $i = 1,2,3$, are tangent to $\cH(4, q^2)$. If $\sigma \cap \cH(4, q^2)$ is Hermitian pencil of lines, then $\sigma$ contains no point $R$ not in $\gamma$ such that the lines $R P_i$, $i = 1,2,3$, are tangent to $\cH(4, q^2)$, whereas if $\sigma \cap \cH(4, q^2)$ is a non--degenerate Hermitian curve, then from Lemma \ref{HermCurve1}, $\sigma$ contains either $0$ or $q$ points $R$ not in $\gamma$ such that the lines $R P_i$, $i = 1,2,3$, are tangent to $\cH(4, q^2)$ according as $T  \in s$ or $T \notin s$, respectively. Since there are $q+1$ planes meeting $\cH(4, q^2)$ in a line, $q^2-q$ planes intersecting $\cH(4, q^2)$ in a Hermitian pencil of lines and $q^4$ planes having $q^3+1$ points in common with $\cH(4, q^2)$, we have that $|N_{\cG_4}(P_1) \cap N_{\cG_4}(P_2) \cap N_{\cG_4}(P_3)| = (q+1)(q^4-q^2) + |\gamma \setminus \{ T, P_1, P_2, P_3\}| = q^5+q^4-q^3-3$, if $T \in s$, and $|N_{\cG_4}(P_1) \cap N_{\cG_4}(P_2) \cap N_{\cG_4}(P_3)| = (q+1)(q^4-q^2) + q^4 \cdot q + |\gamma \setminus \{ T, P_1, P_2, P_3\}| = 2q^5+q^4-q^3-3$, if $T \notin s$.

Assume that $\gamma$ is a plane and that $\gamma \cap \cH(4, q^2)$ is a line, say $\ell$. If $R$ is a point of $\PG(4, q^2) \setminus \left( \cH(4, q^2) \cup \gamma \right)$ such that the lines $R P_i$, $i = 1,2,3$, are tangent to $\cH(4, q^2)$, then from Lemma~\ref{tanPlane}, $\cT_{R} \cap \gamma$ is a Hermitian pencil of lines meeting $\ell$ in one point. From Lemma~\ref{HermCurve2}, in $\gamma$ there are $q^2+2q$ Hermitian pencils of lines meeting $\ell$ in one point and containing $P_1, P_2, P_3$. From Lemma \ref{tanPlane}, for each of these $q^2+2q$ Hermitian pencils of lines, there are $q^3$ points $R$ of $\PG(4, q^2) \setminus \left(\cH(4, q^2) \cup \gamma\right)$ such that the lines $R P_i$, $i = 1,2,3$, are tangent to $\cH(4, q^2)$. Therefore in this case $|N_{\cG_4}(P_1) \cap N_{\cG_4}(P_2) \cap N_{\cG_4}(P_3)| = q^3(q^2+2q) + |\gamma \setminus \left(\ell \cup \{P_1, P_2, P_3\}\right)| = q^5+3q^4-3$.

Assume that $\gamma$ is a plane and that $\gamma \cap \cH(4, q^2)$ is a non--degenerate Hermitian curve. If $R$ is a point of $\PG(4, q^2) \setminus \left( \cH(4, q^2) \cup \gamma \right)$ such that the lines $R P_i$, $i = 1,2,3$, are tangent to $\cH(4, q^2)$, then from Lemma~\ref{secPlane}, we have that $R \in \PG(4, q^2) \setminus \left( \cH(4, q^2) \cup \gamma \cup \gamma^\perp \right)$ and either $\cT_{R} \cap \gamma$ is a Hermitian pencil of lines meeting $\cH(4, q^2)$ in $q+1$ points or $\cT_{R} \cap \gamma$ is a non--degenerate Hermitian curve meeting $\cH(4, q^2)$ in one or $q+1$ points. From Lemma \ref{HermCurve3}, in $\gamma$ there are $3q$ Hermitian pencils of lines meeting $\gamma \cap \cH(4, q^2)$ in $q+1$ points and containing $P_1, P_2, P_3$. Similarly from Proposition \ref{HermCurve5}, in $\gamma$ there are $q^2-q+1$ non--degenerate Hermitian curves meeting $\gamma \cap \cH(4, q^2)$ in one or $q+1$ points and containing $P_1, P_2, P_3$. From Lemma \ref{secPlane}, for each of these Hermitian curves, there are either $(q+1)(q^2-1)$ or $(q+1)(q^2-q)$ points $R$ of $\PG(4, q^2) \setminus \left(\cH(4, q^2) \cup \gamma \cup \gamma^\perp \right)$ such that the lines $R P_i$, $i = 1,2,3$ are tangent to $\cH(4, q^2)$ according as the Hermitian curve is degenerate or not, respectively. Observe that if $R$ belongs to $\gamma$ then necessarily $R$ has to lie on a line joining two of the three points $P_1, P_2, P_3$, otherwise $\gamma \cap \cH(4, q^2)$ would contain the dual of an O'Nan configuration, a contradiction. Therefore in this case $|N_{\cG_4}(P_1) \cap N_{\cG_4}(P_2) \cap N_{\cG_4}(P_3)| = 3q \cdot (q+1)(q^2-1) + (q^2-q+1) \cdot (q+1)(q^2-q) + 3 \cdot (q-1) = q^5+2q^4+3q^3-2q^2-q-3$.
\end{proof}

\subsection{The isomorphism issue}

Set $n = 4$. Let $u$ be a point of $\ell_1$ and let $t$ be a line through $u$ tangent to $\cH(4, q^2)$ at $T$ and contained in $P^\perp$, with $t \ne \bar{\ell}_1$. If $\pi \cap \cH(4, q^2)$ is a Hermitian pencil of lines, then $|t \cap \cA_1| = q^2 - q - 2$, $|t \cap \cA_2| = 0$ and $t$ shares with $\cA$ the $q+1$ points of a Baer subline. If $\pi \cap \cH(4, q^2)$ is a line, then $|t \cap \cA_1| = q^2 - q - 1$, $|t \cap \cA_2| = 0$, $|t \cap \cA| = q$ and $(t \cap \cA) \cup \{u\}$ is Baer subline. Let $u_1$ and $u_2$ be two distinct points of $\cA$ and let $b$ be the Baer subline of $t$ containing $u, u_1, u_2$. In the case when $|\pi \cap \cH(4, q^2)| = q^3+q^2+1$, assume that $T \notin b$. Observe that $u, u_1, u_2$ are three pairwise adjacent vertices of $\cG_4$, $\cG'_4$ and $\cG_4''$.

\begin{prop}\label{char_new}
$|N_{\cG'_4}(u) \cap N_{\cG'_4}(u_1) \cap N_{\cG'_4}(u_2)| = |N_{\cG''_4}(u) \cap N_{\cG''_4}(u_1) \cap N_{\cG''_4}(u_2)| = 2q^5+q^3-3$.
\end{prop}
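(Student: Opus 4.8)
The plan is to compute $|N_{\cG'_4}(u) \cap N_{\cG'_4}(u_1) \cap N_{\cG'_4}(u_2)|$ by carefully tracking how the switching of Construction \ref{cons} changes the set of common neighbours of the three pairwise adjacent vertices $u, u_1, u_2$, starting from the known count in $\cG_4$ given by Theorem \ref{char}. Since $u \in \ell_1$ and $u_1, u_2 \in \cA$, the three vertices lie on the tangent line $t$ through $P$; the Baer subline $b$ of $t$ through $u, u_1, u_2$ does not pass through $T = t \cap \cH(4,q^2)$ (this is automatic when $|\pi \cap \cH(4,q^2)|$ is a line because then $(t\cap\cA)\cup\{u\}$ is a Baer subline avoiding $T$, and is assumed in the Hermitian-pencil case). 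Thus case (1) of Theorem \ref{char} with $T \notin s$ applies and in $\cG_4$ we have $|N_{\cG_4}(u) \cap N_{\cG_4}(u_1) \cap N_{\cG_4}(u_2)| = 2q^5+q^4-q^3-3$. The target value $2q^5+q^3-3$ differs from this by $-q^4+2q^3 = -q^3(q-2)$ in the pencil case; one must show the switching removes exactly this many common neighbours (net).

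The key steps are as follows. First, partition the vertex set into the five classes of Construction \ref{cons}: $\ell_1$, $\ell_2$, $\cA_1$, $\cA_2$, and the remainder $\cR$ (which contains $\cA$ and the points of $\pi\setminus(\bar\ell_1\cup\bar\ell_2)$). For a vertex $w$ in each class, I would record separately whether $w$ is adjacent in $\cG'_4$ to each of $u$ (a vertex of $\ell_1$, whose neighbourhood becomes $(N_{\cG_4}(u)\setminus\cA_1)\cup\cA_2$), $u_1$ and $u_2$ (vertices of $\cA\subset\cR$, whose neighbourhoods are unchanged), and compare with the $\cG_4$-situation. A vertex $w\in\cR\setminus(\cA_1\cup\cA_2\cup\ell_1\cup\ell_2)$ has unchanged adjacency to all three, so its membership in the triple intersection is unchanged. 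A vertex $w\in\ell_2$ is now adjacent to $u$ (since $\ell_2\subseteq\cA_2\cup\ldots$ — more precisely $w\in\ell_2$ means $w$ was non-adjacent to $u$ in $\cG_4$ and the switch toggles this only if $w\in\cA_2$; but $\ell_2$-vertices get the rule $(N_{\cG_4}(w)\setminus\cA_2)\cup\cA_1$, so adjacency $w\sim u$ is unchanged and $w$ is still non-adjacent to $u$), hence contributes nothing. The substantive changes come from $w\in\cA_1$ and $w\in\cA_2$: such $w$ has its adjacency to $u$ toggled (a vertex of $\cA_1$ loses $\ell_1$, so loses $u$; a vertex of $\cA_2$ gains $\ell_1$, so gains $u$), while its adjacency to $u_1,u_2\in\cA$ is unchanged (since $u_1,u_2\notin\ell_1\cup\ell_2$). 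So the net change to the triple intersection is
$$
-\bigl|\cA_1 \cap N_{\cG_4}(u_1) \cap N_{\cG_4}(u_2)\bigr| + \bigl|\cA_2 \cap N_{\cG_4}(u_1) \cap N_{\cG_4}(u_2)\bigr|.
$$

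The main work, and the expected obstacle, is to evaluate these two quantities — i.e.\ to count the vertices of $\cA_1$ (resp.\ $\cA_2$) that are common neighbours of $u_1$ and $u_2$ in $\cG_4$. Recall $\cA_1\cup\cA$ consists of the points, minus $P$, on the $q+1$ planes through $\bar\ell_1$ meeting $\cH(4,q^2)$ in a line (pencil case) or analogous configuration, all lying in $P^\perp$; the vertices $u_1, u_2$ lie on $t\subset P^\perp$. I would work inside the solid $P^\perp$ (a $\PG(3,q^2)$ meeting $\cH$ in a non-degenerate Hermitian surface $\cH(3,q^2)$ if $P\notin$, but here $P\in\cH$ so $P^\perp$ is a tangent hyperplane and $P^\perp\cap\cH$ is a cone with vertex $P$), projecting from $P$ onto a $\PG(3,q^2)$; then $\ell_1,\ell_2$ become points, $\cA$ becomes a line through neither image, and the adjacency conditions become incidence/tangency conditions to a Hermitian surface, at which point Lemma \ref{HermSur} and the tactical-configuration arguments of Lemmas \ref{HermCurve1} and \ref{tanPlane} can be invoked. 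By the symmetry $\ell_1\leftrightarrow\ell_2$ of the whole configuration (an element of $\PGU(5,q^2)$ stabilising $\pi$, $\bar\ell_i$, and fixing $u_1,u_2$ pointwise can likely be produced, or at least one sees the two counts must be equal by a transitivity argument on the relevant tactical configuration), the two terms should coincide, forcing the net change to vanish — but that would give $2q^5+q^4-q^3-3$, not the claimed value. Hence the symmetry must fail: $u$ lies on $\ell_1$, breaking the symmetry, so I would instead directly count $|\cA_1\cap N_{\cG_4}(u_1)\cap N_{\cG_4}(u_2)|$ and $|\cA_2\cap N_{\cG_4}(u_1)\cap N_{\cG_4}(u_2)|$ using Lemma \ref{HermCurve-1} (which is tailor-made: $u, u_1, u_2$ on a tangent line with Baer subline avoiding $T$, and $u'$ a further point — here $u'$ plays the role of a point in the relevant plane $\sigma_i$), tracking that a common neighbour of $u_1,u_2$ in $\cA_i$ corresponds to a point $R$ whose lines to $u_1,u_2$ are tangent and which sits in a plane through $\bar\ell_i$ of the appropriate type. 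The bookkeeping of which of the $q^4-q^2$ or $q^3$ points produced by the tactical-configuration counts actually land in $\cA_1$ versus $\cA_2$ versus $\cR$ is the delicate part; once it is done, summing the contributions and adding back the points of $\gamma\setminus(\ldots)$ recovered inside $\pi$ should yield $2q^5+q^3-3$ in both the pencil and the line cases, and I would verify the line case ($\cG''_4$) by the parallel computation using the "line" branches of Lemmas \ref{tanPlane} and \ref{secPlane} in place of the pencil branches.
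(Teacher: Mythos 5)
Your reduction is exactly the paper's first step: since $u \in \ell_1$ while $u_1, u_2 \in \cA$ keep their neighbourhoods under the switching, the switched triple intersection equals $\bigl( \left( N_{\cG_4}(u) \cap N_{\cG_4}(u_1) \cap N_{\cG_4}(u_2) \right) \setminus \cA_1 \bigr) \cup \bigl( N_{\cG_4}(u_1) \cap N_{\cG_4}(u_2) \cap \cA_2 \bigr)$, and the base value $2q^5+q^4-q^3-3$ from Theorem~\ref{char}(1) with $T \notin b$ is correctly identified. But the proposal stops where the actual proof begins: it never evaluates $|\cA_1 \cap N_{\cG_4}(u_1) \cap N_{\cG_4}(u_2)|$ and $|\cA_2 \cap N_{\cG_4}(u_1) \cap N_{\cG_4}(u_2)|$, and these two numbers are the entire quantitative content of the proposition. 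You explicitly defer them (``the main work'', ``the delicate part''), briefly entertain a symmetry argument that you yourself note gives the wrong answer, and only gesture at Lemmas~\ref{HermCurve0} and~\ref{HermCurve-1}; moreover the suggested detour through a projection from $P$ and Lemma~\ref{HermSur} does not apply, since $P \in \cH(4,q^2)$, so $P^\perp$ meets $\cH(4,q^2)$ in a cone rather than in the non--degenerate configuration that lemma addresses.

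What is missing, concretely, is the plane-by-plane count inside $P^\perp$ that the paper carries out. One splits the sets over the $q^2+1$ planes through $t$ contained in $P^\perp$: the plane $\sigma = \langle t, P \rangle$ meets $\cH(4,q^2)$ in a line, so all of $\cA_1 \cap \sigma$ (of size $q^2(q^2-q-2)$, resp.\ $q^2(q^2-q-1)$) lies in $N_{\cG_4}(u_1) \cap N_{\cG_4}(u_2)$ while $\cA_2 \cap \sigma = \emptyset$; each of the remaining $q^2$ planes $\sigma'$ meets $\cH(4,q^2)$ in a non--degenerate Hermitian curve, and one uses Lemma~\ref{HermCurve0}, Lemma~\ref{HermCurve-1} (with $u' = \sigma' \cap \ell_2$) and the non-existence of dual O'Nan configurations to decide, tangent line by tangent line through $u$ resp.\ $u'$, whether the relevant common neighbour lies in $\cA$, $\cA_1$ or $\cA_2$. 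This yields per plane $q-2$ points of $\cA_1$ and $2(q-2)$ points of $\cA_2$ in the pencil case, $q-1$ and $2(q-1)$ in the line case, hence totals $q^4-4q^2$ and $2q^2(q-2)$, resp.\ $q^4-2q^2$ and $2q^2(q-1)$, and in both cases $2q^5+q^4-q^3-3$ minus the first quantity plus the second equals $2q^5+q^3-3$. Without this counting your text is a plan for a proof, not a proof.
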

\begin{proof}
From the definition of $\cG'_4$ and $\cG''_4$ we have that
\begin{align*}
N_{\cG_4'}(u) \cap N_{\cG_4'}(u_1) \cap N_{\cG_4'}(u_2) & =  N_{\cG_4''}(u) \cap N_{\cG_4''}(u_1) \cap N_{\cG_4''}(u_2) \\
& = \left( \left( N_{\cG_4}(u) \setminus \cA_1 \right) \cup \cA_2 \right) \cap N_{\cG_4}(u_1) \cap N_{\cG_4}(u_2) \\
& =  \left( \left( N_{\cG_4}(u) \cap N_{\cG_4}(u_1) \cap N_{\cG_4}(u_2) \right) \setminus \cA_1 \right) \cup \left( N_{\cG_4}(u_1) \cap N_{\cG_4}(u_2) \cap \cA_2 \right) .
\end{align*}

Let $\sigma$ be the plane spanned by $t$ and $P$. Since $\sigma \cap \cH(4, q^2)$ is a line, it follows that every point of $\sigma \cap \cA_1$ spans with both $u_1$ and $u_2$ a line that is tangent to $\cH(4, q^2)$. Hence $\cA_1 \cap \sigma \subseteq N_{\cG_4}(u) \cap N_{\cG_4}(u_1) \cap N_{\cG_4}(u_2)$. Note that $|\cA_2 \cap \sigma| = 0$, whereas $|\cA_1 \cap \sigma|$ equals $q^2 (q^2-q-2)$ or $q^2(q^2-q-1)$ according as $\pi \cap \cH(4, q^2)$ is a Hermitian pencil of lines or a line. 

Denote by $\sigma'$ a plane through $t$ contained in $P^\perp$, with $\sigma' \ne \sigma$, and let $u' = \sigma' \cap \ell_2$. Then $\sigma' \cap \cH(4, q^2)$ is a non--degenerate Hermitian curve. 

If $|\pi \cap \cH(4, q^2)|$ equals $q^3+q^2+1$ (resp. $q^2+1$), the points of $\cA_1$ contained in $\sigma' \setminus \sigma$ are on the $q$ (resp. $q-1$) lines of $\sigma'$ tangent to $\sigma' \cap \cH(4, q^2)$ passing through $u$ and distinct from $t$ (resp. $t$ and $\sigma' \cap \pi$). By Lemma~\ref{HermCurve0}, there is a unique point $R$ on each of these tangent lines such that $R \in N_{\cG_4}(u) \cap N_{\cG_4}(u_1) \cap N_{\cG_4}(u_2)$ and if $R \in N_{\cG_4}(u) \cap N_{\cG_4}(u_1) \cap N_{\cG_4}(u_2)$, then $R \in (\cA \cup \cA_1) \cap (\sigma' \setminus \sigma)$. In particular, if the case $\langle R, u' \rangle \cap t \notin \{u_1, u_2\}$ occurs, then $R \notin \cA$, otherwise $R, u_1, u_2, u'$ would be a dual O'Nan configuration of $\sigma' \cap \cH(4, q^2)$, which is impossible. Hence $R \in \cA_1$. Assume that $\langle R, u' \rangle \cap t \in \{u_1, u_2\}$. If $\pi \cap \cH(4, q^2)$ is a Hermitian pencil of lines, then $R \in \cA$, by Lemma~\ref{HermCurve-1}. If $\pi \cap \cH(4, q^2)$ is a line, then $R \in \cA_1$, otherwise $R, u_1, u, u'$ or $R, u_2, u, u'$ would be a dual O'Nan configuration of $\sigma' \cap \cH(4, q^2)$. Therefore $|N_{\cG_4}(u) \cap N_{\cG_4}(u_1) \cap N_{\cG_4}(u_2) \cap \cA_1|$ equals $q^2(q^2-q-2)+q^2(q-2) = q^4-4q^2$ or $q^2(q^2-q-1)+q^2(q-1) = q^4-2q^2$, according as $\pi \cap \cH(4, q^2)$ is a Hermitian pencil of lines or a line, respectively. 

In a similar way, if $|\pi \cap \cH(4, q^2)|$ equals $q^3+q^2+1$ (resp. $q^2+1$), the points of $\cA_2$ contained in $\sigma' \setminus \sigma$ are on the $q+1$ (resp. $q$) lines of $\sigma'$ tangent to $\sigma' \cap \cH(4, q^2)$ passing through $u'$ (and distinct from $\sigma' \cap \pi$). Each of these lines meets $t$ in a point of $\cA$. Let $R$ be a point on one of these $q+1$ (resp. $q$) lines, with $R \in N_{\cG_4}(u_1) \cap N_{\cG_4}(u_2) \setminus (t \cup \{u'\})$. Hence $R \in (\cA \cup \cA_2) \cap (\sigma' \setminus \sigma)$. If $\langle R, u' \rangle \cap t \notin \{u_1, u_2\}$, then $R \notin \cA_2$, otherwise $R, u_1, u_2, u'$ would be a dual O'Nan configuration of $\sigma' \cap \cH(4, q^2)$, a contradiction. Assume that $\langle R, u' \rangle \cap t \in \{u_1, u_2\}$. There are $q-1$ possibilities for $R$, since the line $\langle u', u_i \rangle$ contains $q-1$ points distinct from $u'$ belonging to $N_{\cG_4}(u_1) \cap N_{\cG_4}(u_2)$, by Lemma \ref{HermCurve0}. If $\pi \cap \cH(4, q^2)$ is a Hermitian pencil of lines, then exactly one of these $q-1$ points is in $\cA$ by Lemma \ref{HermCurve-1}. If $\pi \cap \cH(4, q^2)$ is a line, then for all the $q-1$ possibilities we have that $R \in \cA_2$, otherwise $R, u_1, u, u'$ or $R, u_2, u, u'$ would be a dual O'Nan configuration of $\sigma' \cap \cH(4, q^2)$. Therefore $|N_{\cG_4}(u_1) \cap N_{\cG_4}(u_2) \cap \cA_2 \cap \sigma'|$ is $2(q-2)$ or $2(q-1)$ and hence $|N_{\cG_4}(u_1) \cap N_{\cG_4}(u_2) \cap \cA_2|$ equals $2(q-2) q^2$ or $2(q-1)q^2$, according as $\pi \cap \cH(4, q^2)$ is a Hermitian pencil of lines or a line, respectively. 

It follows that $|N_{\cG_4'}(u) \cap N_{\cG_4'}(u_1) \cap N_{\cG_4'}(u_2)| = 2q^5+q^4-q^3-3 - (q^4-4q^2) + 2(q^3 - 2q^2) = 2q^5+q^3-3$ and $|N_{\cG_4''}(u) \cap N_{\cG_4''}(u_1) \cap N_{\cG_4''}(u_2)| = 2q^5+q^4-q^3-3 - (q^4-2q^2) + 2(q^3 - q^2) = 2q^5+q^3-3$.
\end{proof}

As a consequence of Theorem \ref{char} and Proposition \ref{char_new}, we have the following.

\begin{theorem}
Let $n \ge 4$. The graph $\cG_n$ is not isomorphic neither to $\cG_n'$ (if $q >2$), nor to $\cG''_n$.
\end{theorem}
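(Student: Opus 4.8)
The plan is to distinguish the graphs by an isomorphism invariant attached to triangles. For a graph $\cG$ and three pairwise adjacent vertices $P_1,P_2,P_3$ set $\nu_{\cG}(P_1,P_2,P_3)=|N_{\cG}(P_1)\cap N_{\cG}(P_2)\cap N_{\cG}(P_3)|$; the multiset of the numbers $\nu_{\cG}$, taken over all triangles of $\cG$, is preserved by every graph isomorphism, so it suffices to produce a value realised by some triangle of $\cG'_n$ (resp.\ of $\cG''_n$) but by no triangle of $\cG_n$.

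For $n=4$ the possible values are completely described by Theorem~\ref{char}: the span $\langle P_1,P_2,P_3\rangle$ of a triangle of $\cG_4$ is either a tangent line, a plane meeting $\cH(4,q^2)$ in a line, or a plane meeting $\cH(4,q^2)$ in a non--degenerate Hermitian curve — the remaining possibility, a plane meeting it in a degenerate curve of rank two, cannot occur since all tangent lines contained in such a plane are concurrent — and in each of these three cases Theorem~\ref{char} gives $\nu_{\cG_4}$ explicitly, so $\nu_{\cG_4}$ only ever equals one of $q^5+q^4-q^3-3$, $2q^5+q^4-q^3-3$, $q^5+3q^4-3$, $q^5+2q^4+3q^3-2q^2-q-3$. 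On the other hand Proposition~\ref{char_new} exhibits a triangle $\{u,u_1,u_2\}$ of $\cG'_4$ (and of $\cG''_4$) — with $u\in\ell_1$ and $u_1,u_2$ chosen in $\cA$ on a tangent line through $u$ inside $P^{\perp}$ — with $\nu_{\cG'_4}(u,u_1,u_2)=\nu_{\cG''_4}(u,u_1,u_2)=2q^5+q^3-3$. It then remains to verify that $2q^5+q^3-3$ differs from each of the four values above: equating it with the first, the third, or the fourth would force, respectively, $q^2-q+2=0$, $q^2-3q+1=0$, or $(q^2-1)(q^2-2q-1)=0$, none of which is satisfied by a prime power $q$, while equating it with $2q^5+q^4-q^3-3$ forces $q=2$. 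Hence for $q>2$ each of $\cG'_4$ and $\cG''_4$ has a triangle whose common--neighbour count is not attained by any triangle of $\cG_4$, so $\cG_4\not\cong\cG'_4$ and $\cG_4\not\cong\cG''_4$.

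Two further points must be dealt with. For $q=2$ the value $2q^5+q^3-3$ coincides with $2q^5+q^4-q^3-3$, so the crude comparison breaks down; since moreover $\cA_1=\cA_2=\emptyset$ for the Hermitian--pencil construction when $q=2$ and $n=4$ (so that $\cG'_4=\cG_4$), only $\cG''$ is at stake there, and one refines the invariant to the \emph{multiplicity} with which the value $2q^5+q^4-q^3-3$ is attained — equivalently, one tracks how the tangent--line cliques of size $q^2$ through $u$ are broken by the switch — which is a finite verification. For $n\ge 5$ one repeats the argument in $\PG(n,q^2)$: the trichotomy for the span of a triangle and the common--neighbour counts behind Theorem~\ref{char}, together with the localisation computation of Proposition~\ref{char_new}, carry over with only the powers of $q$ altered, so one again obtains a triangle of $\cG'_n$ (resp.\ $\cG''_n$) whose common--neighbour count is arithmetically separated from every value attained on $\cG_n$ (one may try to shorten this by working inside a $4$--dimensional subspace $\Sigma$ with $\Sigma\cap\cH(n,q^2)$ non--degenerate, on which $\cG_n,\cG'_n,\cG''_n$ induce $\cG_4,\cG'_4,\cG''_4$, but then one still has to account for the common neighbours of the anomalous triangle lying outside $\Sigma$).

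I expect the genuinely laborious parts to be exactly these two: re--running the polynomial bookkeeping of Theorem~\ref{char} and Proposition~\ref{char_new} in dimension $n\ge 5$, and the multiplicity count needed to settle $q=2$ for $\cG''_n$; the central case $n=4$, $q>2$ is then an immediate arithmetic consequence of the two results already established.
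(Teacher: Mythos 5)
Your central case --- $n=4$, $q>2$ --- is correct and is precisely the paper's own argument: the multiset of common-neighbour counts over triangles is an isomorphism invariant, Theorem~\ref{char} shows that in $\cG_4$ only the four listed values occur, Proposition~\ref{char_new} exhibits a triangle of $\cG'_4$ and of $\cG''_4$ with count $2q^5+q^3-3$, and your factorisations ($q^2-q+2$, $q=2$, $q^2-3q+1$, $(q^2-1)(q^2-2q-1)$) separating this value from the four old ones are right; indeed you make explicit an arithmetic check that the paper leaves implicit.

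As a proof of the full statement, however, two gaps remain, both of which you flag but neither of which you close. First, $n\ge 5$ is not actually proved. Re-running Theorem~\ref{char} and Proposition~\ref{char_new} in $\PG(n,q^2)$ is not ``only the powers of $q$ altered'': the analogues of Lemmas~\ref{tanPlane} and~\ref{secPlane} concern the position of $R$ relative to $\gamma^\perp$, which is an $(n-3)$-space, and would need genuine reworking. The paper instead fixes a four-space $\Pi$ with $\bar\ell_1,\bar\ell_2\subset\Pi$ and $\Pi\cap\cH(n,q^2)=\cH(4,q^2)$, so that the induced subgraphs of $\cG_n,\cG'_n,\cG''_n$ on $\Pi\setminus\cH(4,q^2)$ are literally $\cG_4,\cG'_4,\cG''_4$ and no higher-dimensional count is performed. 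Your reservation about such a reduction is legitimate --- a hypothetical isomorphism of the ambient graphs need not preserve $\Pi$, and the common neighbours of the anomalous triangle lying outside $\Pi$ are themselves affected by the switch, since $\cA_1\cup\cA_2\subset P^\perp$ is not contained in $\Pi$ --- and the paper dispatches this point in a single line; but in your write-up the issue is neither resolved nor bypassed by carrying out the laborious alternative, so the theorem for $n\ge5$ is left unproved. Second, the case $q=2$ for $\cG''_n$ is only sketched: ``refine to the multiplicity\dots a finite verification'' is a plan, not a proof, and here the coincidence $2q^5+q^3-3=2q^5+q^4-q^3-3$ at $q=2$ means the value-based invariant genuinely fails, so some new count (multiplicities of the value $69$, or a different configuration) must actually be produced; note that the paper's own phrase ``at least five different values'' likewise needs $q>2$ to yield a new fifth value, so this is a point where an honest completion would have to go beyond both your sketch and the paper's text.
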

\begin{proof}
Let $\Pi$ be a four--space of $\PG(n, q)$ such that $\Pi \cap \cH(n, q^2) = \cH(4, q^2)$ and $\bar{\ell}_i \subset \Pi$, $i = 1, 2$. Then the induced subgraph by $\cG_n$ or $\cG_n'$ or $\cG_n''$ on $\Pi \setminus \cH(4, q^2)$ is $\cG_4$ or $\cG_4'$ or $\cG_4''$. Let $u, u_1, u_2$ be three pairwise adjacent vertices of $\cG_4$, $\cG_4'$ and $\cG_4''$. By Theorem \ref{char}, $|N_{\cG_4}(u) \cap N_{\cG_4}(u_1) \cap N_{\cG_4}(u_2)|$ takes four different values. By Proposition \ref{char_new}, both $|N_{\cG_4'}(u) \cap N_{\cG_4'}(u_1) \cap N_{\cG_4'}(u_2)|$ and $|N_{\cG_4''}(u) \cap N_{\cG_4''}(u_1) \cap N_{\cG_4''}(u_2) |$ take at least five different values.
\end{proof}

\section{Conclusions}

We have seen that if $n \ne 3$, then $\NU(n+1, q^2)$ is not determined by its spectrum. Regarding the case $n = 3$, there are exactly $28$ strongly regular graphs cospectral with $\NU(4, 4)$, see \cite{Spence}. A strongly regular graph having the same parameters as $\NU(4, 9)$ and admitting $2 \times {\rm U}(4, 4)$ as a full automorphism group has been constructed in \cite{CRS}. We ask whether or not the graph $\NU(4, q^2)$, $q \ge 4$, is determined by its spectrum.

\bigskip

\noindent\textit{Acknowledgments.} The first author is supported by a postdoctoral fellowship of the Research Foundation - Flanders (FWO).

\end{document}